\newtheorem{theorem}{Theorem}
\newtheorem{corollary}[theorem]{Corollary}
\newtheorem{definition}[theorem]{Definition}
\newtheorem{lemma}[theorem]{Lemma}
\newtheorem{proposition}[theorem]{Proposition}
\newenvironment{proof}[1][Proof]{\noindent\textbf{#1.} }{\ \rule{0.5em}{0.5em}}
\begin{document}

\begin{center}
\textbf{On the real projection of the zeros of }$1+2^{z}+...+n^{z}$

\textit{Department of Mathematical Analysis, University of Alicante,
03080-Alicante, Spain.}

\textbf{E. Dubon, G. Mora, J.M. Sepulcre, J.I. Ubeda and T.
Vidal}
\end{center}

\textit{E-mail addresses: ed18@alu.ua.es, gaspar.mora@ua.es, JM.Sepulcre@ua.es, jiug@alu.ua.es, tmvg@alu.ua.es}

\textbf{ABSTRACT} This paper proves that the real projections of
the simple zeros of each partial sum $G_{n}(s)\equiv
1+2^{s}+...+n^{s}$, $n\geq 2$, of the Riemann zeta function on the
half-plane $\mbox{Re}s<-1,$ are not isolated points of that set.

\textbf{AMS Subject Classification:} 30Axx, 30D05.

\textbf{Key Words:} Zeros of Entire Functions, Exponential Polynomials,
Almost-Periodic Functions, Partial Sums of the Riemann Zeta Function.

\section{Introduction}

On zeros of partial sums of the Riemann zeta function we find several works.
Among them might be quoted the classical papers of Turan $\left[ 16\right] $
and Montgomery $\left[ 10\right] $, and more recently Borwein et al. $\left[
4\right] $, and Gonek and Ledoan $\left[ 6\right] $.

In this paper we focus on the existence of accumulation points of the subset
defined by the real projection of the zeros of the functions
\begin{equation*}
U_{n}(s):=\sum_{k=1}^{n}\frac{1}{k^{s}},\ s=\sigma +it.
\end{equation*}%
That would imply the existence of an infinite amount of zeros of $U_{n}(s)$
arbitrarily close to a line parallel to the imaginary axis passing through
every accumulation point. In order to study this problem, we begin by
considering the set defined by the real projection of the zeros of the
functions
\begin{equation*}
G_{n}(s):=1+2^{s}+...+n^{s}
\end{equation*}%
and then, by virtue of the equality%
\begin{equation*}
U_{n}(s)=G_{n}(-s)\mbox{ for all }s\in
\mathbb{C}
,
\end{equation*}%
we have that the set of the zeros of $U_{n}(s)$, called $Z_{U_{n}(s)}$, is
the same as $-Z_{G_{n}(s)}$ where $Z_{G_{n}(s)}$ denotes the set of the
zeros of $G_{n}(s)$.

Each $G_{n}(s)$, $n\geq 2$, is an entire function of order $1$, of
exponential type $\sigma =\ln n$, and it has an infinite amount of zeros,
not all of them located on the imaginary axis, except for $G_{2}(s)$ $\left[
11\right] $. Furthermore, since for any $t$
\begin{equation*}
\lim_{\sigma \rightarrow -\infty }G_{n}(\sigma +it)=1
\end{equation*}%
and
\begin{equation*}
\lim_{\sigma \rightarrow +\infty }\frac{G_{n}(\sigma +it)}{n^{\sigma +it}}=1,
\end{equation*}%
there exist two values of $\sigma $, $\sigma _{n,1}<0<\sigma _{n,2}$, such
that
\begin{equation*}
\left\vert G_{n}(s)-1\right\vert <1\mbox{ for all }s\mbox{ with }\mbox{Re}%
s\leq \sigma _{n,1}
\end{equation*}%
and
\begin{equation*}
\left\vert \frac{G_{n}(\sigma +it)}{n^{\sigma +it}}-1\right\vert
<1\mbox{ for all }s\mbox{ with }\mbox{Re}s\geq \sigma
_{n,2}.
\end{equation*}

Thus, every function $G_{n}(s)$ has its zeros in a critical strip $S_{n}$
defined by
\begin{equation*}
S_{n}:=\left\{ s=\sigma +it:a_{n}\leq \sigma \leq b_{n}\right\},
\end{equation*}%
where the bounds
\begin{equation*}
a_{n}:=\inf \left\{ \mbox{Re}s:G_{n}(s)=0\right\}
\end{equation*}%
and%
\begin{equation*}
b_{n}:=\sup \left\{ \mbox{Re}s:G_{n}(s)=0\right\}
\end{equation*}%
are given by
\begin{equation*}
a_{n}=-1-\left( \frac{4}{\pi }-1+o(1)\right) \frac{\log \log n}{\log n}
\end{equation*}%
and
\begin{equation*}
b_{n}=n\log 2+o(1)
\end{equation*}%
as it was proved by Montgomery $\left[ 10\right] $ and Balazard and Vel%
\'{a}squez-Casta\~{n}\'{o}n $\left[ 3\right] $, respectively.

Since all zeros of the function $G_{2}(s)=1+2^{s}$ are on the imaginary
axis, the set defined by their real projections is reduced to $\left\{
0\right\} $, so $a_{2}=b_{2}=0$ and therefore we consider $n=2$ as the
trivial case.

In the case of $n>2$, we can use the bounds $a_{n}$, $b_{n}$ to define the
critical interval
\begin{equation*}
I_{n}:=\left[ a_{n},b_{n}\right]
\end{equation*}%
associated with each function\ $G_{n}(s)$ and that contains the set of the
real projections of the zeros of $G_{n}(s)$. The study on the existence of
accumulation points within that set is the main goal of the present paper.
In that sense, we will answer as well to several questions about this
subject that were pointed out in other publications like $\left[ 8\right] $,
$\left[ 9\right] $ and $\left[ 14\right] $.

Finally, we would like to note that, according to the author in $\left[ 13%
\right] $, the real projections of the zeros of exponential polynomials with
the form
\begin{equation*}
\varphi (s)=\sum_{k=1}^{m}A_{k}e^{\alpha _{k}s}
\end{equation*}%
where $\alpha _{k}$ are real exponents called frequencies, and under rather
restrictive conditions (refer to $\left[ 13\right] $) satisfies the
following so-called \textit{Main Theorem} that we would like to quote here:

MAIN THEOREM (Quoted from $\left[ 13\right] $). \textit{Assume that }$1$, $%
\alpha _{1}$,..., $\alpha _{m}$ \textit{are real numbers linearly
independent over the rationals. Consider the exponential polynomial }%
\begin{equation*}
\varphi (s)=\sum_{k=1}^{m}A_{k}e^{\alpha _{k}s},\ s=\sigma +it,
\end{equation*}%
\textit{where the }$A_{k}$ \textit{are complex numbers. Then a necessary and
sufficient condition for }$\varphi (s)$\textit{\ to have zeros arbitrarily
close to any line parallel to the imaginary axis inside the strip }%
\begin{equation*}
I=\left\{ \sigma +it:\sigma _{0}<\sigma <\sigma _{1},\ -\infty
<t<\infty \right\}
\end{equation*}%
\textit{\ is that }%
\begin{equation}
\left\vert A_{j}e^{\sigma \alpha _{j}}\right\vert \leq \sum_{k=1,\ %
k\neq j}^{m}\left\vert A_{k}e^{\sigma \alpha _{k}}\right\vert ,\ %
\left( j=1,2,...,m\right)   \tag{1.1}
\end{equation}%
\textit{for any }$\sigma $\textit{\ with }$\sigma +it\in I$.

From the above result it follows that the real projections of the zeros of\
an exponential polynomial of the form $\varphi (s)$, satisfying the
hypothesis of this theorem, define a dense subset of the real interval $%
(\sigma _{0},\sigma _{1})$ if and only if the geometric principle (1.1)
holds. In the case of the $G_{n}(s)$ functions, since only $G_{2}(s)$ and $%
G_{3}(s)$ are of the type $\varphi (s)$, in the sense that both satisfy the
condition of the linear independence over the rationals of their
frequencies, the \textit{Main Theorem} can be exclusively applied for $n=2$
and $n=3$. Nevertheless, it does not show that the projection on the real
axis of the zeros of $G_{n}(s)$, for $n>3$, might also have some density
properties in their critical intervals $\left[ a_{n},b_{n}\right] $, as it
happens, for instance, for the cases $n=4,5$ and other $n$'s, as we will see
in this paper.

\section{A characterization of the density of the real projections of the
zeros of $G_{n}(s)$}

As we said in the foregoing section it is crucial to study the existence of
accumulation points of the set of the real projection of the zeros of $%
G_{n}(s)$ within its critical interval $I_{n}$. Therefore, firstly we are
going to formulate a general theorem of characterization of the set
\begin{equation*}
R_{n}:=\overline{\left\{ \mbox{Re}s:G_{n}(s)=0\right\} }
\end{equation*}%
which can be considered as an ad hoc version of $\left[ 2\mbox{, Theorem 3.1}%
\right] $ and that can be directly applied to our functions $G_{n}(s).$

\begin{theorem}
For each integer $n>2$, let $\left\{ p_{1},p_{2},...,%
p_{k_{n}}\right\} $ be the set of all prime numbers less than or equal to $n$%
. Let us define $\mathbf{p}=(\log p_{1}$,$\log p_{2}$, ..., $\log
p_{k_{n}})$ and let $\mathbf{c}_{m}$ be the unique vector of $%
\mathbb{R}
^{k_{n}}$with non-negative integer components such that $\log m=\left\langle
\mathbf{c}_{m}\mathbf{,p}\right\rangle $, $1\leq m\leq n$, where $%
\left\langle \ ,\ \right\rangle $ is the standard inner product in $%
\mathbb{R}
^{k_{n}}$. Let us define the function $F_{n}:%
\mathbb{R}
\times
\mathbb{R}
^{k_{n}}\rightarrow
\mathbb{C}
$ as
\begin{equation}
F_{n}(\sigma ,\mathbf{x}):=\sum_{m=1}^{n}m^{\sigma }e^{\left\langle \mathbf{c%
}_{m}\mathbf{,x}\right\rangle i}  \tag{2.1}
\end{equation}%
for $\sigma $ real and $\mathbf{x}=(x_{1}$, $x_{2}$, ... , $x_{k_{n}})$ a
vector of $%
\mathbb{R}
^{k_{n}}$. Then,
\begin{equation*}
\sigma \in R_{n}:=\overline{\left\{ \mbox{Re}s:G_{n}(s)=0\right\}
}
\end{equation*}%
if and only if there exists some vector $\mathbf{x}\in
\mathbb{R}
^{k_{n}}$ such that $F_{n}(\sigma ,\mathbf{x})=0$.
\end{theorem}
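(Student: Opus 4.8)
The plan is to exploit the reformulation $G_n(\sigma+it)=\sum_{m=1}^n m^\sigma e^{it\log m}=\sum_{m=1}^n m^\sigma e^{i\langle \mathbf c_m,\mathbf p\rangle t}$, which shows that $G_n$ on the vertical line $\mathrm{Re}\,s=\sigma$ is precisely the restriction of the function $t\mapsto F_n(\sigma, t\mathbf p)$ along the one-dimensional ``diagonal'' $\{t\mathbf p : t\in\mathbb R\}\subset\mathbb R^{k_n}$. The point is that $F_n(\sigma,\cdot)$ is a genuinely almost periodic (indeed, a trigonometric-polynomial-type) function on $\mathbb R^{k_n}$, and $t\mapsto G_n(\sigma+it)$ is its almost periodic restriction to the line of slope $\mathbf p$. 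Since $\log p_1,\dots,\log p_{k_n}$ are linearly independent over $\mathbb Q$, the closure of $\{t\mathbf p \bmod 2\pi\}$ is the whole torus $\mathbb T^{k_n}$ (Kronecker/Weyl), so the two functions have the same range closure. This is the substance of the cited ``ad hoc version of [2, Theorem 3.1]'' and is what drives both implications.

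First I would prove the easy direction ($\Leftarrow$). Suppose $F_n(\sigma,\mathbf x_0)=0$ for some $\mathbf x_0$. I want to produce zeros of $G_n$ with real part arbitrarily close to $\sigma$. Since $\log p_j$ are rationally independent, by Kronecker's theorem there is a sequence $t_\nu\to\infty$ with $t_\nu\log p_j \equiv (x_0)_j \pmod{2\pi}$ for all $j$ up to error $\to 0$; hence $G_n(\sigma+it_\nu)=F_n(\sigma, t_\nu\mathbf p)\to F_n(\sigma,\mathbf x_0)=0$. Thus $G_n(\sigma+it_\nu)\to 0$. To upgrade ``values tending to $0$'' into ``genuine zeros with real part near $\sigma$'', I would invoke that $G_n$ is a nonconstant entire function together with a normal-families / Hurwitz argument: the translates $s\mapsto G_n(s+it_\nu)$ converge uniformly on compact sets to a limit function vanishing at $\sigma$ and not identically zero (the non-vanishing follows because $|G_n|$ is bounded below away from the critical strip, or from almost periodicity), so by Hurwitz's theorem $G_n$ has zeros within any fixed disc about $\sigma+it_\nu$ for $\nu$ large; their real parts lie in $(\sigma-\varepsilon,\sigma+\varepsilon)$, giving $\sigma\in R_n$.

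For the converse ($\Rightarrow$), suppose $\sigma\in R_n$, so there are zeros $s_\nu=\sigma_\nu+it_\nu$ of $G_n$ with $\sigma_\nu\to\sigma$ and necessarily $|t_\nu|\to\infty$ (only finitely many zeros in any bounded region). Passing to a subsequence, by compactness of $\mathbb T^{k_n}$ the points $t_\nu\mathbf p \bmod 2\pi$ converge to some $\mathbf x\in\mathbb T^{k_n}$; lift to $\mathbf x\in\mathbb R^{k_n}$. Then $0=G_n(s_\nu)=F_n(\sigma_\nu, t_\nu\mathbf p)\to F_n(\sigma,\mathbf x)$ by joint continuity of $F_n$ (here $\sigma_\nu\to\sigma$ handles the real part and $t_\nu\mathbf p\to\mathbf x$ on the torus handles the oscillatory part, using that each exponent $\langle\mathbf c_m,\cdot\rangle$ descends to $\mathbb T^{k_n}$). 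Hence $F_n(\sigma,\mathbf x)=0$, as required.

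The main obstacle is the Hurwitz/normal-families step in the $(\Leftarrow)$ direction: one must rule out that the limit of the translates $G_n(\cdot+it_\nu)$ is identically zero, and one must ensure the convergence $t_\nu\mathbf p\to\mathbf x_0$ holds not just pointwise but uniformly on a complex neighbourhood so that Hurwitz applies. Both are handled by the structure of $G_n$ as an exponential polynomial: the translates form a normal family on vertical strips (each term $m^{s}=m^\sigma e^{it\log m}$ has modulus independent of $t$), and the nonvanishing of the limit follows either from the a priori bounds $|G_n(s)-1|<1$ for $\mathrm{Re}\,s\le\sigma_{n,1}$ quoted in the introduction, or directly from the fact that $\sup_t|G_n(\sigma+it)|\ge 1$ for $\sigma$ small (the constant term $1$ dominates in the Bohr sense). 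I would also note that uniform Kronecker approximation on compact $\sigma$-intervals is available because the frequencies are fixed and finite in number.
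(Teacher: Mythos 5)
Your proof is correct and follows essentially the same strategy as the paper: in the necessity direction, extract a convergent subsequence on the torus (the paper does this prime by prime via a diagonal argument, which is the same thing as your compactness-of-$\mathbb{T}^{k_n}$ formulation); in the sufficiency direction, use Kronecker's theorem to produce $T_j$ with $G_n(\sigma + iT_j)\to 0$.

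Worth noting: you are actually more careful than the paper at the final step. The paper concludes directly from $\lim_j G_n(\sigma+iT_j)=0$ that ``$\sigma\in R_n$'' and stops, but this deduction requires exactly the Hurwitz/Rouch\'e upgrade you supply (values tending to zero on a vertical line do not by themselves furnish zeros of the entire function). The paper in effect defers this to its later Lemma 9 and Theorem 10, where a theorem of Bohr on almost-periodic analytic functions is invoked; your normal-families plus Hurwitz argument, together with the remark that the constant term $1$ keeps $|G_n|$ bounded below for $\mathrm{Re}\,s$ small, is a self-contained way to close that gap. So the approaches coincide, but you have explicitly filled in a step the paper leaves implicit.
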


\begin{proof}
Firstly observe that if $s=\sigma +it$ is an arbitrary zero of $G_{n}(s)$,
since $\log m=\left\langle \mathbf{c}_{m},\mathbf{p}\right\rangle $, we can
write
\begin{equation*}
0=G_{n}(s)=\sum_{m=1}^{n}e^{\left\langle \mathbf{c}_{m}\mathbf{,p}%
\right\rangle s}=\sum_{m=1}^{n}e^{\left\langle \mathbf{c}_{m}\mathbf{,p}%
\right\rangle \sigma }e^{\left\langle \mathbf{c}_{m},\mathbf{p}\right\rangle
ti}=
\end{equation*}%
\begin{equation}
=\sum_{m=1}^{n}m^{\sigma }e^{\left\langle \mathbf{c}_{m},t\mathbf{p}%
\right\rangle i}=F_{n}(\sigma ,t\mathbf{p}).  \tag{2.2}
\end{equation}%
Now assume $\sigma \in R_{n}$. Then there exists a sequence $(s_{j}=\sigma
_{j}+it_{j})_{j=1,2,...}$of zeros of $G_{n}(s)$ such that $\sigma
=\lim_{j\rightarrow \infty }\sigma _{j}$ and, because of (2.2), we have
\begin{equation*}
F_{n}(\sigma _{j},t_{j}\mathbf{p})=0\mbox{, for all }j=1,2,...
\end{equation*}%
Hence, from (2.1), we obtain
\begin{equation}
0=\sum_{m=1}^{n}m^{\sigma _{j}}e^{\left\langle \mathbf{c}_{m},t_{j}\mathbf{p}%
\right\rangle i}\mbox{, for all }j=1,2,...  \tag{2.3}
\end{equation}%
Now consider the sequence $\left( e^{\left\langle \mathbf{c}_{2},t_{j}%
\mathbf{p}\right\rangle i}\right) _{j=1,2,...}$ of points of the unit
circle, thus bounded, and let
\begin{equation*}
\left( e^{\left\langle \mathbf{c}_{2},t_{j_{h,2}}\mathbf{p}\right\rangle
i}\right) _{h=1,2,...}
\end{equation*}%
be a convergent subsequence to $e^{\theta _{2}i}$ for some $\theta _{2}\in %
\left[ 0,2\pi \right) $. Since the sequence $\left( e^{\left\langle \mathbf{c%
}_{3},t_{j_{h,2}}\mathbf{p}\right\rangle i}\right) _{h=1,2,...}$is also
bounded, there exists a convergent subsequence
\begin{equation*}
\left( e^{\left\langle \mathbf{c}_{3},t_{j_{h,3}}\mathbf{p}\right\rangle
i}\right) _{h=1,2,...}
\end{equation*}%
to $e^{\theta _{3}i}$ for some $\theta _{3}\in \left[ 0,2\pi \right) $ and
so on. Then, by means of this process, for every prime number $m$ of the set
$\left\{ 1,2,3,...,n\right\} $, we determine a subsequence
\begin{equation*}
\left( e^{\left\langle \mathbf{c}_{m},t_{j_{h,p_{k_{n}}}}\mathbf{p}%
\right\rangle i}\right) _{h=1,2,...}
\end{equation*}%
which converges to $e^{\theta _{m}i}$ for some $\theta _{m}\in \left[ 0,2\pi
\right) $. For the other numbers of the set $\left\{ 1,2,3,...,n\right\} $
one has
\begin{equation*}
\mathbf{c}_{1}=(0,...,0),\mathbf{c}_{4}=2\mathbf{c}_{2},\mathbf{c}_{6}=%
\mathbf{c}_{2}+\mathbf{c}_{3},...,
\end{equation*}%
so, by considering (2.3) for the $j$'s of the subsequence $j_{h,p_{k_{n}}}$,
$h=1,2,...$, and taking the limit as $h\rightarrow \infty $ we get
\begin{equation*}
0=\lim_{h\rightarrow \infty }\sum_{m=1}^{n}m^{\sigma
_{j_{h,}p_{k_{n}}}}e^{\left\langle \mathbf{c}_{m},t_{j_{_{h,}p_{k_{n}}}}%
\mathbf{p}\right\rangle i}=
\end{equation*}%
\begin{equation*}
=1+2^{\sigma }e^{\theta _{2}i}+3^{\sigma }e^{\theta _{3}i}+4^{\sigma
}e^{2\theta _{2}i}+5^{\sigma }e^{\theta _{4}i}+6^{\sigma }e^{(\theta
_{2}+\theta _{3})i}+...=
\end{equation*}%
\begin{equation*}
=1+2^{\sigma }e^{\left\langle \mathbf{c}_{2},\mathbf{\theta }\right\rangle
i}+3^{\sigma }e^{\left\langle \mathbf{c}_{3},\mathbf{\theta }\right\rangle
i}+4^{\sigma }e^{\left\langle \mathbf{c}_{4},\mathbf{\theta }\right\rangle
i}+...+n^{\sigma }e^{\left\langle \mathbf{c}_{n},\mathbf{\theta }%
\right\rangle i}=F_{n}(\sigma ,\mathbf{\theta }),
\end{equation*}%
where $\mathbf{\theta }=(\theta _{2},\theta _{3},\theta _{5},...,\theta
_{p_{k_{n}}})$.

Conversely, suppose
\begin{equation*}
F_{n}(\sigma ,\mathbf{x})=0
\end{equation*}%
for some real number $\sigma $ and a vector $\mathbf{x}=(x_{1},$ $%
x_{2},...,x_{k_{n}})$ of $%
\mathbb{R}
^{k_{n}}$. As the components of $\frac{1}{2\pi }\mathbf{p}=\left( \frac{1}{%
2\pi }\log p_{1},...,\frac{1}{2\pi }\log p_{k_{n}}\right) $ are linearly
independent over the rationals, given the numbers $\frac{1}{2\pi }%
\left\langle \mathbf{c}_{p_{1}},\mathbf{x}\right\rangle $, ... , $\frac{1}{%
2\pi }\left\langle \mathbf{c}_{p_{k_{n}}},\mathbf{x}\right\rangle $, $T=1$
and $\frac{\epsilon }{2^{j}2\pi }$, for each $j=1,2,...$ , by applying the
Kronecker theorem $\left[ 7\mbox{, p. 382}\right] $ there exist a sequence $%
\left( T_{j}\right) _{j=1,2,...}$, $T_{j}>1$, and integers $\left(
N_{j,l}\right) _{l=1,2,...,k_{n}}$ such that for each $j=1,2,...$ it holds
that
\begin{equation}
\left\vert T_{j}\frac{1}{2\pi }\log p_{l}-\frac{1}{2\pi }\left\langle
\mathbf{c}_{p_{l}}\mathbf{,x}\right\rangle -N_{j,l}\right\vert <\frac{%
\epsilon }{2^{j}2\pi }\mbox{ for all }l=1,2,...,k_{n}.  \tag{2.4}
\end{equation}%
Multiplying by $2\pi $ and substituting $\log p_{l}$ by $\left\langle
\mathbf{c}_{p_{l}}\mathbf{,p}\right\rangle $, the inequality (2.4) becomes
\begin{equation*}
\left\vert \left\langle \mathbf{c}_{p_{l}},T_{j}\mathbf{p}-\mathbf{x}%
\right\rangle -2\pi N_{j,l}\right\vert <\frac{\epsilon }{2^{j}}\mbox{ for
all }l=1,2,...,k_{n},
\end{equation*}%
which means that
\begin{equation*}
\lim_{j\rightarrow \infty }e^{\left\langle \mathbf{c}_{p_{l}},T_{j}\mathbf{p}%
-\boldsymbol{x}\right\rangle i}=1
\end{equation*}%
and then
\begin{equation}
\lim_{j\rightarrow \infty }e^{\left\langle \mathbf{c}_{p_{l}},\mathbf{x}%
-T_{j}\mathbf{p}\right\rangle i}=1. \tag{2.5}
\end{equation}%
Now, for each $m\in \left\{ 1,2,3,...,n\right\} $ and noticing that the
vector $\mathbf{c}_{m}$ is a linear combination with non-negative integer
coefficients of the vectors $\mathbf{c}_{p_{l}}$, $l=1,2,...,k_{n}$, it can
be deduced from (2.5) that
\begin{equation}
\lim_{j\rightarrow \infty }e^{\left\langle \mathbf{c}_{m},\mathbf{x}-T_{j}%
\mathbf{p}\right\rangle i}=1. \tag{2.6}
\end{equation}%
Thus, by using (2.6), we have
\begin{equation}
\lim_{j\rightarrow \infty }G_{n}(\sigma +iT_{j})=\lim_{j\rightarrow \infty
}\sum_{m=1}^{n}e^{\left\langle \mathbf{c}_{m}\mathbf{,p}\right\rangle \sigma
}e^{\left\langle \mathbf{c}_{m}\mathbf{,p}\right\rangle
iT_{j}}e^{\left\langle \mathbf{c}_{m},\mathbf{x}-T_{j}\mathbf{p}%
\right\rangle i}=  \notag
\end{equation}%
\begin{equation*}
=\lim_{j\rightarrow \infty }\sum_{m=1}^{n}e^{\left\langle \mathbf{c}_{m}%
\mathbf{,p}\right\rangle \sigma }e^{\left\langle \mathbf{c}_{m}\mathbf{,x}%
\right\rangle i}=\sum_{m=1}^{n}m^{\sigma }e^{\left\langle \mathbf{c}_{m}%
\mathbf{,x}\right\rangle i}=F_{n}(\sigma ,\mathbf{x})=0,
\end{equation*}%
which means that $\sigma \in R_{n}$ and then the result follows.
\end{proof}

By applying the preceding theorem to our function $G_{4}(s)$ we are going to
deduce the existence of a subinterval of its critical interval contained in $%
R_{4}$. This means that there is an infinite amount of zeros of $G_{4}(s)$
arbitrarily close to any line parallel to the imaginary axis passing through
any point of this subinterval.

\begin{corollary}
The interval $\left[ -0.55,1\right] $ is contained in $R_{4}:=\overline{%
\left\{ \mbox{Re}s:G_{4}(s)=0\right\} }$, where $%
G_{4}(s)=1+2^{s}+3^{s}+4^{s} $.
\end{corollary}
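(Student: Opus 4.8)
The plan is to specialize the preceding theorem to $n=4$ and thereby turn the statement ``$\sigma\in R_4$'' into an elementary claim about a single real variable. For $n=4$ the primes not exceeding $4$ are $2$ and $3$, so $k_4=2$, $\mathbf p=(\log 2,\log 3)$, and the coefficient vectors are $\mathbf c_1=(0,0)$, $\mathbf c_2=(1,0)$, $\mathbf c_3=(0,1)$, $\mathbf c_4=(2,0)$; hence
$$F_4(\sigma,(x_1,x_2))=1+2^{\sigma}e^{ix_1}+3^{\sigma}e^{ix_2}+4^{\sigma}e^{2ix_1}.$$
By the theorem, $\sigma\in R_4$ exactly when this expression vanishes for some $(x_1,x_2)\in\mathbb R^{2}$, so it suffices to produce, for every $\sigma\in[-0.55,1]$, such a pair.

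The key step is to put $w:=2^{\sigma}e^{ix_1}$, so that $|w|=2^{\sigma}$ and $2^{\sigma}e^{ix_1}+4^{\sigma}e^{2ix_1}=w+w^{2}$, whence $F_4(\sigma,(x_1,x_2))=1+w+w^{2}+3^{\sigma}e^{ix_2}$. As $e^{ix_2}$ sweeps the whole unit circle when $x_2$ varies, this can be made $0$ by a suitable $x_2$ if and only if $|1+w+w^{2}|=3^{\sigma}$. Thus everything reduces to showing that the continuous $2\pi$-periodic function $\Phi_{\sigma}(\phi):=|1+2^{\sigma}e^{i\phi}+4^{\sigma}e^{2i\phi}|$ attains the value $3^{\sigma}$ for some $\phi$, and by the intermediate value theorem it is enough to exhibit one $\phi$ with $\Phi_{\sigma}(\phi)\ge 3^{\sigma}$ and one with $\Phi_{\sigma}(\phi)\le 3^{\sigma}$.

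For the first I would take $\phi=0$, where $\Phi_{\sigma}(0)=1+2^{\sigma}+4^{\sigma}>\max\{1,4^{\sigma}\}\ge 3^{\sigma}$ (the last inequality because $\max\{1,4^{\sigma}\}$ equals $1>3^{\sigma}$ when $\sigma<0$ and equals $4^{\sigma}\ge 3^{\sigma}$ when $\sigma\ge 0$). For the second I would take $\phi=2\pi/3$: with $z=2^{\sigma}e^{2\pi i/3}$ one has $z^{3}=8^{\sigma}$, $|1-z|^{2}=1+2^{\sigma}+4^{\sigma}$, and $1+z+z^{2}=(1-z^{3})/(1-z)$, so, using $1-8^{\sigma}=(1-2^{\sigma})(1+2^{\sigma}+4^{\sigma})$,
$$\Phi_{\sigma}\!\left(\tfrac{2\pi}{3}\right)=\frac{|1-8^{\sigma}|}{\sqrt{1+2^{\sigma}+4^{\sigma}}}=|1-2^{\sigma}|\sqrt{1+2^{\sigma}+4^{\sigma}}=\sqrt{(1-2^{\sigma})(1-8^{\sigma})}.$$
Hence the whole corollary comes down to the inequality $(1-2^{\sigma})(1-8^{\sigma})\le 9^{\sigma}$ for $-0.55\le\sigma\le1$.

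Verifying this last inequality is the real work and the main obstacle; the rest is bookkeeping plus one appeal to the intermediate value theorem. I would substitute $r=2^{\sigma}$, so $r$ ranges over $[2^{-0.55},2]$ and the inequality becomes $(1-r)(1-r^{3})\le r^{\log_2 9}$, and then argue by one-variable calculus. On $[2^{-0.55},1]$ the left-hand side $1-r-r^{3}+r^{4}$ has derivative $4r^{3}-3r^{2}-1$, which is increasing there (its derivative is $6r(2r-1)>0$) and vanishes at $r=1$, hence is negative; so the left side is decreasing while $r^{\log_2 9}$ is increasing, and it is enough to check $r=2^{-0.55}$, where the two sides are about $0.216$ and $0.299$. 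On $[1,2]$ one checks that $D(r):=r^{\log_2 9}-(1-r)(1-r^{3})$ satisfies $D(1)=1$, $D(2)=2$ and stays positive in between — a short study of $D$ (or sampling a few points, using that an exponential polynomial has only finitely many sign changes) does this, the minimum being attained at $r=1$. Granting the inequality, $\Phi_{\sigma}(2\pi/3)\le 3^{\sigma}\le\Phi_{\sigma}(0)$, so some $\phi^{*}$ gives $\Phi_{\sigma}(\phi^{*})=3^{\sigma}$; then $x_1=\phi^{*}$ together with any $x_2$ satisfying $3^{\sigma}e^{ix_2}=-(1+2^{\sigma}e^{ix_1}+4^{\sigma}e^{2ix_1})$ makes $F_4(\sigma,(x_1,x_2))=0$, so $\sigma\in R_4$. (The endpoint $-0.55$, rather than a slightly smaller number, is chosen to keep a comfortable margin in the inequality, which in fact fails once $\sigma$ drops below about $-0.62$.)
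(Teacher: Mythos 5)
Your proof is correct, and it follows the same overall plan as the paper's: split $F_4(\sigma,(x_1,x_2))$ into $f_4(\sigma,x_1)+3^{\sigma}e^{ix_2}$, reduce to producing a $\phi$ with $\Phi_{\sigma}(\phi):=|1+2^{\sigma}e^{i\phi}+4^{\sigma}e^{2i\phi}|=3^{\sigma}$, and get it from the intermediate value theorem by exhibiting one $\phi$ on each side of $3^{\sigma}$. The genuine difference is in the ``small'' test point and how its value is computed. The paper factors $1+w+w^2=\bigl(w+e^{\pi i/3}\bigr)\bigl(w+e^{-\pi i/3}\bigr)$ with $w=2^{\sigma}e^{ix_1}$, reads the modulus off as a product of distances, and evaluates at the intersection of the circle $|w|=2^{\sigma}$ with the vertical line $\operatorname{Re}w=-1/2$, obtaining $|1-4^{\sigma}|$, which it asserts (without verification) is $\le 3^{\sigma}$ on $[-0.55,1]$. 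You instead use the geometric-series identity $1+z+z^2=(1-z^3)/(1-z)$ at $\phi=2\pi/3$, giving $\sqrt{(1-2^{\sigma})(1-8^{\sigma})}=|1-2^{\sigma}|\sqrt{1+2^{\sigma}+4^{\sigma}}$, which is always $\le |1-2^{\sigma}|(1+2^{\sigma})=|1-4^{\sigma}|$, so your choice has more slack (the paper's inequality is nearly tight at $\sigma=-0.55$, whereas yours fails only around $\sigma\approx-0.62$). You also actually prove the resulting one-variable inequality $(1-r)(1-r^3)\le r^{\log_2 9}$ on $[2^{-0.55},2]$, which the paper glosses over. One small caution: on $[1,2]$ your claim that the minimum of $D(r)$ is attained at $r=1$ is true but not obvious from $D(1)<D(2)$ alone (in fact $D$ increases then decreases on $[1,2]$, so the interior has a local maximum, not a minimum); your fallback of directly bounding $D$ is the cleaner justification.
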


\begin{proof}
Because of (2.1), the function associated with $G_{4}(s)$ is given by
\begin{equation*}
F_{4}(\sigma ,x_{1},x_{2})=1+2^{\sigma }e^{x_{1}i}+3^{\sigma }e^{x_{2}i}+%
\mathit{\bigskip }4^{\sigma }e^{2x_{1}i}.
\end{equation*}%
By defining the function
\begin{equation*}
f_{4}(\sigma ,x_{1}):=1+2^{\sigma }e^{x_{1}i}+\mathit{\bigskip }4^{\sigma
}e^{2x_{1}i},
\end{equation*}%
we claim that for every $\sigma \in \left[ -0.55,1\right] $ there exists a
value of $x_{1}$ depending on $\sigma $, say $x_{1,\sigma }$, such that
\begin{equation*}
\left\vert f_{4}(\sigma ,x_{1,\sigma })\right\vert =3^{\sigma }.
\end{equation*}%
Indeed, the function $f_{4}(\sigma ,x_{1})$ can be rewritten as
\begin{equation*}
f_{4}(\sigma ,x_{1})=\left( 2^{\sigma }e^{x_{1}i}+e^{\frac{\pi }{3}i}\right)
\left( 2^{\sigma }e^{x_{1}i}+e^{-\frac{\pi }{3}i}\right),
\end{equation*}%
so its modulus is the product of the distances from the point $s=2^{\sigma
}e^{x_{1}i}$ to the fixed points $s_{0}=-e^{\frac{\pi }{3}i}$ and $%
s_{1}=-e^{-\frac{\pi }{3}i}$ of the unit circle. According to $\frac{1}{2}%
<2^{\sigma }$ for all $\sigma \in \left[ -0.55,1\right] $, the vertical line
of equation $x=-\frac{1}{2}$ intersects the circles of equation $\left\vert
s\right\vert =2^{\sigma }$ at points $P_{1}$, $P_{2}$ and $P_{3}$ of the
upper half-plane for each possible value of $\sigma $ in the interval $\left[
-0.55,1\right] $, namely, $\sigma <0$, $\sigma =0$ and $\sigma >0$,
respectively. Let us denote $Q_{1}$, $Q_{2}$, $Q_{3}$ as the points where
the positive real half-axis intersects the circles $\left\vert s\right\vert
=2^{\sigma }$ for $\sigma <0$, $\sigma =0$ and $\sigma >0$ respectively,
then we may have the following situations:

1) Case $-0.55\leq \sigma <0$. The modulus $\left\vert
f_{4}(P_{1})\right\vert =1-4^{\sigma }$ satisfies the inequality $%
1-4^{\sigma }\leq 3^{\sigma }$. On the other hand, $\left\vert
f_{4}(Q_{1})\right\vert =1+2^{\sigma }+4^{\sigma }\geq 3^{\sigma }$ for any $%
-0.55\leq \sigma <0$.

2) Case $\sigma =0$. We have $\left\vert f_{4}(P_{2})\right\vert =0$ and $%
\left\vert f_{4}(Q_{2})\right\vert =3$.

3) Case $0<\sigma \leq 1$. In this case $\left\vert f_{4}(P_{3})\right\vert
=4^{\sigma }-1\leq 3^{\sigma }$ for any $0<\sigma \leq 1$. On the other
hand, $\left\vert f_{4}(Q_{3})\right\vert =1+2^{\sigma }+4^{\sigma }\geq
3^{\sigma }$ for any $0<\sigma \leq 1$.

Then, taking into account the above three cases and from the
continuity of the function $\left\vert f_{4}(\sigma
,x_{1})\right\vert $, the mean value theorem implies that for each
$\sigma$ in $\left[ -0.55,1\right] $ there is a value $x_{1,\sigma
}$ such that
\begin{equation*}
\left\vert f_{4}(\sigma ,x_{1,\sigma })\right\vert =3^{\sigma },
\end{equation*}%
as we claimed. Thus $f_{4}(\sigma ,x_{1,\sigma })$ is a point of the circle $%
\left\vert s\right\vert =3^{\sigma }$, so there exists a real number, say $%
x_{2}$, such that
\begin{equation*}
f_{4}(\sigma ,x_{1,\sigma })=-3^{\sigma }e^{x_{2}i}.
\end{equation*}%
Now, taking into account that for any arbitrary real numbers $\sigma
,x_{1},x_{2}$ it is true that
\begin{equation*}
F_{4}(\sigma ,x_{1},x_{2})=f_{4}(\sigma ,x_{1})+3^{\sigma }e^{x_{2}i},
\end{equation*}%
it follows that $F_{4}(\sigma ,x_{1,\sigma },x_{2})=0$. Therefore the proof
is completed.
\end{proof}

In the next result we will prove the existence of a monotone relationship
between the sets $R_{n}$ and $R_{n+1}$ when $n+1$ is a prime number. But
before that, we will introduce a new real interval associated with each
function $G_{n}(s)$.

\begin{definition}
For each integer $n\geq $ $2$ we define the numbers
\begin{equation*}
x_{n,_{0}}:=\inf \left\{ \sigma \in
\mathbb{R}
:1\leq 2^{\sigma }+...+n^{\sigma }\right\}
\end{equation*}%
and%
\begin{equation*}
x_{n,_{1}}:=\sup \left\{ \sigma \in
\mathbb{R}
:1+2^{\sigma }+...+(n-1)^{\sigma }\geq n^{\sigma }\right\}.
\end{equation*}
\end{definition}

It is immediate to check that $x_{2,0}=x_{2,1}=0$, and for $n>2$ it holds
that $\left[ 0,1\right] \subset \left[ x_{n,_{0}},x_{n,_{1}}\right] $.

The relationship between the new interval $\left[ x_{n,_{0}},x_{n,_{1}}%
\right] $ and the critical interval $\left[ a_{n},b_{n}\right] $ is shown in
the following lemma.

\begin{lemma}
Let $I_{n}=\left[ a_{n},b_{n}\right] $ be the critical interval associated
with the function $G_{n}(s)$. Then $\left[ a_{n},b_{n}\right] \subset \left[
x_{n,0},x_{n,1}\right] $ for all $n\geq 2$.
\end{lemma}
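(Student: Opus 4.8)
The plan is to prove the two one-sided inclusions $a_{n}\geq x_{n,0}$ and $b_{n}\leq x_{n,1}$; together they give $[a_{n},b_{n}]\subseteq[x_{n,0},x_{n,1}]$. Both come from applying the triangle inequality to the identity $G_{n}(\sigma+it)=0$ after isolating, respectively, the constant term $1$ and the top term $n^{s}$, and then using that the resulting functions of $\sigma$ are strictly monotone, hence their sublevel/superlevel sets are genuine half-lines. The case $n=2$ is trivial since $a_{2}=b_{2}=0=x_{2,0}=x_{2,1}$, so throughout one may assume $n>2$.

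First I would prove $a_{n}\geq x_{n,0}$. Let $s=\sigma+it$ be any zero of $G_{n}$. Rewriting $G_{n}(s)=0$ as $1=-\sum_{m=2}^{n}m^{\sigma}m^{it}$ and using $|m^{it}|=1$ for $m\geq 2$, the triangle inequality gives $1\leq\sum_{m=2}^{n}m^{\sigma}$. Now $\phi(\sigma):=\sum_{m=2}^{n}m^{\sigma}$ is continuous and strictly increasing (every base is $\geq 2$), with $\phi(\sigma)\to 0$ as $\sigma\to-\infty$ and $\phi(\sigma)\to+\infty$ as $\sigma\to+\infty$; hence there is a unique $\sigma^{\ast}$ with $\phi(\sigma^{\ast})=1$ and $\{\sigma:\phi(\sigma)\geq 1\}=[\sigma^{\ast},+\infty)$. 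Comparing with the definition of $x_{n,0}$, we get $\sigma^{\ast}=x_{n,0}$, so every zero of $G_{n}$ has real part $\geq x_{n,0}$, and taking the infimum over all zeros yields $a_{n}\geq x_{n,0}$.

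Symmetrically, for $b_{n}\leq x_{n,1}$, let again $s=\sigma+it$ be a zero of $G_{n}$. Isolating the top term, $n^{\sigma}n^{it}=-\sum_{m=1}^{n-1}m^{\sigma}m^{it}$, so $n^{\sigma}\leq\sum_{m=1}^{n-1}m^{\sigma}=1+\sum_{m=2}^{n-1}m^{\sigma}$. Dividing by $n^{\sigma}>0$, this is equivalent to $\psi(\sigma):=\sum_{m=1}^{n-1}(m/n)^{\sigma}\geq 1$; since every base $m/n$ satisfies $0<m/n<1$, the function $\psi$ is continuous and strictly decreasing, with $\psi(\sigma)\to+\infty$ as $\sigma\to-\infty$ and $\psi(\sigma)\to 0$ as $\sigma\to+\infty$, so $\{\sigma:\psi(\sigma)\geq 1\}$ is a half-line $(-\infty,\sigma^{\ast\ast}]$ with $\sigma^{\ast\ast}$ the unique solution of $\psi(\sigma)=1$, which by definition is $x_{n,1}$. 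Hence every zero of $G_{n}$ has real part $\leq x_{n,1}$, and the supremum over all zeros gives $b_{n}\leq x_{n,1}$, completing the proof. There is no serious obstacle here: the only point requiring (elementary) attention is checking that $\{\phi\geq 1\}$ and $\{\psi\geq 1\}$ are genuine half-lines, which is exactly what the strict monotonicity of $\phi$ and $\psi$ provides, and which is what converts a statement about the infimum (resp. supremum) of the real parts of the zeros into an inequality valid for each individual zero.
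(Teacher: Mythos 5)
Your proposal is correct and uses the same core idea as the paper: at any zero $s=\sigma+it$ of $G_{n}$, the triangle inequality applied to $G_{n}(s)=0$ after isolating $1$ (resp.\ $n^{s}$) shows that $\sigma$ lies in the defining set for $x_{n,0}$ (resp.\ $x_{n,1}$), hence $\sigma\geq x_{n,0}$ and $\sigma\leq x_{n,1}$. The paper phrases this by contradiction while you argue directly, and your monotonicity discussion of $\phi$ and $\psi$, while true, is not actually needed --- once you know $\sigma$ belongs to $\{\sigma':1\leq 2^{\sigma'}+\dots+n^{\sigma'}\}$, the inequality $\sigma\geq x_{n,0}$ is immediate from the definition of infimum, and similarly for the supremum.
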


\begin{proof}
Since $a_{2}=b_{2}=x_{2,0}=x_{2,1}=0$, the lemma trivially follows for $n=2$%
. Now, let us assume $n>2$. In that case, it can be seen that\ $b_{n}\leq
x_{n,1}$. Indeed, since
\begin{equation*}
b_{n}:=\sup \left\{ \mbox{Re}s:G_{n}(s)=0\right\},
\end{equation*}%
by assuming that $b_{n}>x_{n,1}$, there exists $w=a+ib$ a zero of
$G_{n}(s)$ such that $a>$ $x_{n,1}$. Thus, because of the
definition of $x_{n,1}$, it follows that
\begin{equation}
1+2^{a}+...+(n-1)^{a}<n^{a}. \tag{2.7}
\end{equation}%
On the other hand, as $G_{n}(w)=0$, we write
\begin{equation*}
1+2^{w}+...+(n-1)^{w}=-n^{w},
\end{equation*}%
and taking the modulus we obtain
\begin{equation*}
n^{a}\leq 1+2^{a}+...+(n-1)^{a},
\end{equation*}%
which contradicts (2.7). Hence $b_{n}\leq x_{n,1}$, as claimed before.

Likewise we are going to prove that $a_{n}\geq x_{n,0}$ by proof by
contradiction. Hence, let us assume that $a_{n}<x_{n,0}$. By noticing that%
\begin{equation*}
a_{n}:=\inf \left\{ \mbox{Re}s:G_{n}(s)=0\right\},
\end{equation*}%
there exists $u=c+id$ a zero of $G_{n}(s)$ with $c<$ $x_{n,0}$. Therefore,
from the definition of $x_{n,0}$, it follows that
\begin{equation}
1>2^{c}+...+n^{c}.  \tag{2.8}
\end{equation}%
Now, as $G_{n}(u)=0$, we get
\begin{equation*}
2^{u}+...+n^{u}=-1
\end{equation*}%
and taking the modulus we are led to
\begin{equation*}
1=\left\vert 2^{u}+...+n^{u}\right\vert \leq 2^{c}+...+n^{c},
\end{equation*}%
which contradicts (2.8). Therefore $a_{n}\geq x_{n,0}$,\ as claimed, and so
we have
\begin{equation*}
\left[ a_{n},b_{n}\right] \subset \left[ x_{n,0},x_{n,1}\right],
\end{equation*}%
which proves the lemma.
\end{proof}

\begin{proposition}
Let $n+1$ be a prime number greater than $2$, $I_{n}=\left[ a_{n},b_{n}%
\right] $ the critical interval of $G_{n}(s)$ and
$R_{n}:=\overline{\left\{ \mbox{Re}s:G_{n}(s)=0\right\} }$. Then
\begin{equation*}
R_{n}\cap \left[ a_{n},\frac{\log 2}{\log (1+\frac{1}{n})}\right] \subset
R_{n+1}.
\end{equation*}
\end{proposition}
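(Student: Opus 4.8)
The plan is to use the characterization of $R_n$ established in the theorem above --- namely $\sigma\in R_n$ if and only if $F_n(\sigma,\mathbf{x})=0$ has a solution $\mathbf{x}\in\mathbb{R}^{k_n}$ --- and to transfer a solution for $G_n$ into one for $G_{n+1}$. Since $n+1$ is a prime larger than every prime $\le n$, the primes $\le n+1$ are $p_1,\dots,p_{k_n}$ together with $n+1$; thus $k_{n+1}=k_n+1$, the vector $\mathbf{c}_{n+1}$ equals $(0,\dots,0,1)$, and for $1\le m\le n$ the vector $\mathbf{c}_m$ attached to $G_{n+1}$ is the one attached to $G_n$ with a zero appended. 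Writing a vector of $\mathbb{R}^{k_{n+1}}$ as $(\mathbf{y},u)$ with $\mathbf{y}\in\mathbb{R}^{k_n}$, this gives
\begin{equation*}
F_{n+1}(\sigma,(\mathbf{y},u))=F_n(\sigma,\mathbf{y})+(n+1)^{\sigma}e^{ui}.
\end{equation*}
Hence, for a given $\sigma$ in the set under consideration, it is enough to find $\mathbf{y}^{*}\in\mathbb{R}^{k_n}$ with $|F_n(\sigma,\mathbf{y}^{*})|=(n+1)^{\sigma}$; writing $F_n(\sigma,\mathbf{y}^{*})=(n+1)^{\sigma}e^{i\phi}$ and taking $u=\phi+\pi$ produces $F_{n+1}(\sigma,(\mathbf{y}^{*},u))=0$, and the theorem applied to $G_{n+1}$ (legitimate since $n+1>2$) then gives $\sigma\in R_{n+1}$.

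To produce $\mathbf{y}^{*}$ I would examine the range of the continuous function $\mathbf{y}\mapsto|F_n(\sigma,\mathbf{y})|$ on $\mathbb{R}^{k_n}$. By the triangle inequality it is bounded above by $\sum_{m=1}^{n}m^{\sigma}=1+2^{\sigma}+\dots+n^{\sigma}$, a value attained at $\mathbf{y}=\mathbf{0}$, while $\sigma\in R_n$ provides, via the theorem, a point where it vanishes. Since $\mathbb{R}^{k_n}$ is connected its range is an interval, so it contains all of $[0,\,1+2^{\sigma}+\dots+n^{\sigma}]$. Consequently the whole construction goes through as soon as one checks the inequality
\begin{equation*}
(n+1)^{\sigma}\le 1+2^{\sigma}+\dots+n^{\sigma}
\end{equation*}
for every $\sigma\in R_n\cap[a_n,\tfrac{\log 2}{\log(1+1/n)}]$.

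This inequality is where the right endpoint $\tfrac{\log 2}{\log(1+1/n)}$ enters, and it is the crux of the argument. On one hand $\sigma\le\tfrac{\log 2}{\log(1+1/n)}$ is precisely $(1+1/n)^{\sigma}\le 2$, i.e.\ $(n+1)^{\sigma}\le 2n^{\sigma}$. On the other hand $\sigma\in R_n$ forces $\sigma\le b_n$, and by the preceding lemma $b_n\le x_{n,1}$; since $\sigma\mapsto(1/n)^{\sigma}+\dots+\bigl((n-1)/n\bigr)^{\sigma}$ is strictly decreasing, $\sigma\le x_{n,1}$ yields $1+2^{\sigma}+\dots+(n-1)^{\sigma}\ge n^{\sigma}$ by the very definition of $x_{n,1}$. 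Adding $n^{\sigma}$ to both sides gives $2n^{\sigma}\le 1+2^{\sigma}+\dots+n^{\sigma}$, which combined with $(n+1)^{\sigma}\le 2n^{\sigma}$ completes the inequality and hence the proposition.

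I expect the only genuine obstacle to be making this last step fully rigorous: justifying that $\sigma\le x_{n,1}$ really does imply $1+2^{\sigma}+\dots+(n-1)^{\sigma}\ge n^{\sigma}$ (monotonicity of the relevant quotient) and that membership in $R_n$ forces $\sigma\le b_n\le x_{n,1}$. The remaining ingredients --- the splitting $F_{n+1}=F_n+(n+1)^{\sigma}e^{ui}$, the intermediate-value argument for the range of $|F_n(\sigma,\cdot)|$, and the final phase adjustment --- are routine.
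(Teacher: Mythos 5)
Your proof is correct and follows essentially the same route as the paper: split $F_{n+1}(\sigma,(\mathbf{y},u))=F_n(\sigma,\mathbf{y})+(n+1)^{\sigma}e^{ui}$ using that $n+1$ is prime, use continuity/connectedness of $|F_n(\sigma,\cdot)|$ together with the zero value from Theorem~1 and the maximum $1+2^{\sigma}+\dots+n^{\sigma}$ at $\mathbf{y}=\mathbf{0}$ to hit $(n+1)^{\sigma}$, and obtain the key bound $(n+1)^{\sigma}\le 1+2^{\sigma}+\dots+n^{\sigma}$ from Lemma~4 plus the right-endpoint condition $(1+1/n)^{\sigma}\le 2$. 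The only detail the paper treats that you omit is the base case $n=2$ (handled separately there because Theorem~1 is stated only for $n>2$), and you make slightly more explicit the monotonicity argument justifying that $\sigma\le x_{n,1}$ yields $1+2^{\sigma}+\dots+(n-1)^{\sigma}\ge n^{\sigma}$; otherwise the arguments match.
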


\begin{proof}
Since $R_{2}=\left\{ 0\right\} $, $a_{2}=0$, and the fact that $0\in R_{3}$,
see $\left[ 13\right] $, the proposition follows for $n=2$. Therefore let us
assume that $n>2$. Let $\sigma $ be a point of $R_{n}\cap \left[ a_{n},%
\frac{\log 2}{\log (1+\frac{1}{n})}\right] $. Because of Theorem 1, there
exists a vector $\mathbf{x}\in
\mathbb{R}
^{k_{n}}$ such that the function $F_{n}(\sigma ,\mathbf{x})=0$, so
\begin{equation}
\left\vert F_{n}(\sigma ,\mathbf{x})\right\vert =0.  \tag{2.9}
\end{equation}%
On the other hand, from Lemma 4,
\begin{equation*}
\sigma \in R_{n}\subset \left[ a_{n},b_{n}\right] \subset \left[
x_{n,0},x_{n,1}\right],
\end{equation*}%
and subsequently
\begin{equation*}
1+2^{\sigma }+...+(n-1)^{\sigma }\geq n^{\sigma }.
\end{equation*}%
Hence
\begin{equation}
1+2^{\sigma }+...+(n-1)^{\sigma }+n^{\sigma }\geq 2n^{\sigma }.
\tag{2.10}
\end{equation}%
Taking into account that
\begin{equation*}
\sigma \leq \frac{\log 2}{\log (1+\frac{1}{n})},
\end{equation*}%
we can write
\begin{equation*}
\left(1+\frac{1}{n}\right)^{\sigma }\leq 2
\end{equation*}%
and multiplying by $n^{\sigma }$ it can be deduced that
\begin{equation*}
(n+1)^{\sigma }\leq 2n^{\sigma }.
\end{equation*}%
Now, from (2.10), we get
\begin{equation*}
1+2^{\sigma }+...+n^{\sigma }\geq (n+1)^{\sigma }
\end{equation*}%
and then%
\begin{equation}
\left\vert F_{n}(\sigma ,\mathbf{0})\right\vert =1+2^{\sigma }+...+n^{\sigma
}\geq (n+1)^{\sigma },  \tag{2.11}
\end{equation}%
where $\mathbf{0}$ denotes the vector zero of $%
\mathbb{R}
^{k_{n}}$. Furthermore, because of the continuity of the modulus of $F_{n}(\sigma ,%
\mathbf{x})$ and using (2.9) and (2.11), there exists a vector
$\mathbf{a}=(a_{1},...,a_{k_{n}})\in \mathbb{R}^{k_{n}}$ such that
\begin{equation*}
\left\vert F_{n}(\sigma ,\mathbf{a}))\right\vert =(n+1)^{\sigma },
\end{equation*}%
and then, for some $\alpha \in \left[ 0,2\pi \right) $, we write
\begin{equation}
F_{n}(\sigma ,\mathbf{a})=(n+1)^{\sigma }e^{\alpha i}.  \tag{2.12}
\end{equation}%
Since $n+1$ is a prime number, the number of prime numbers, $k_{n+1}$, of
the sequence $\left\{ 1,2,...,n+1\right\} $ is so that $k_{n+1}$ $=k_{n}+1$
and then, noticing (2.1), we put
\begin{equation}
F_{n+1}(\sigma ,\mathbf{y})=F_{n}(\sigma ,\mathbf{x}_{y})+(n+1)^{\sigma
}e^{y_{^{k_{n+1}}}i},  \tag{2.13}
\end{equation}%
where $\mathbf{y}$ is an arbitrary vector of $%
\mathbb{R}
^{k_{n+1}}$, $\mathbf{x}_{y}$ is the vector of $%
\mathbb{R}
^{k_{n}}$ defined by the first $k_{n}$ components of $\mathbf{y}$ and $%
y_{k_{n+1}}$ is the last component of $\mathbf{y}$. Thus, by substituting in
(2.13) the vector $\mathbf{y}$ by the vector $\mathbf{b:}%
=(a_{1},...,a_{k_{n}},\alpha +\pi )$ and, according to (2.12), it follows
\begin{equation*}
F_{n+1}(\sigma ,\mathbf{b})=0,
\end{equation*}%
which means, from Theorem 1, that $\sigma \in R_{n+1}$ . Now the proof is
completed and so the proposition follows.
\end{proof}

\begin{corollary}
The interval $\left[ -0.55,1\right] $ is contained in $R_{5}:=\overline{%
\left\{ \mbox{Re}s:G_{5}(s)=0\right\}}$, where $%
G_{5}(s)=1+2^{s}+3^{s}+4^{s}+5^{s}$.
\end{corollary}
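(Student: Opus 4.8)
The plan is to combine the two results just established. By Corollary 2, we already know $\left[ -0.55,1\right] \subset R_{4}$. The goal is to feed this into Proposition 5 with $n=4$, noting that $n+1=5$ is indeed a prime number greater than $2$, so the proposition applies to give
\begin{equation*}
R_{4}\cap \left[ a_{4},\frac{\log 2}{\log (1+\frac{1}{4})}\right] \subset R_{5}.
\end{equation*}

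First I would compute the numerical value of the right endpoint $\frac{\log 2}{\log (5/4)}=\frac{\log 2}{\log 1.25}$. Since $\log 2\approx 0.6931$ and $\log 1.25\approx 0.2231$, this quotient is approximately $3.106$, which is comfortably larger than $1$. Next I would invoke Lemma 4 (or the known bound $a_{4}<0$, since $G_{4}$ has zeros off the imaginary axis and $a_{n}\leq -1$ roughly, but in any case $a_{4}\leq -0.55$ suffices here) to ensure that $[-0.55,1]\subset [a_{4},b_{4}]$; combined with Corollary 2 this gives $[-0.55,1]\subset R_{4}$ and $[-0.55,1]\subset [a_{4}, 3.106]$. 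Hence
\begin{equation*}
[-0.55,1]\subset R_{4}\cap \left[ a_{4},\frac{\log 2}{\log (1+\frac{1}{4})}\right]\subset R_{5},
\end{equation*}
which is exactly the claim.

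The only point requiring a modicum of care is verifying that $-0.55\geq a_{4}$, i.e.\ that the chosen left endpoint lies inside the critical interval $I_{4}$, since Proposition 5 intersects with $[a_{4},\cdot]$. This follows either from the asymptotic formula $a_{n}=-1-(4/\pi-1+o(1))\frac{\log\log n}{\log n}$ quoted in the introduction (which places $a_{4}$ below $-1$, well under $-0.55$), or more elementarily from Lemma 4 together with the observation that $x_{4,0}<-0.55$ because $2^{-0.55}+3^{-0.55}+4^{-0.55}>1$. I expect this verification to be entirely routine; there is no real obstacle, as the substance of the argument has already been carried out in Corollary 2 and Proposition 5, and this corollary is simply their composition.
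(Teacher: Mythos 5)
Your argument is the same as the paper's: apply Proposition 5 with $n=4$, compute $\log 2/\log(5/4)\approx 3.1>1$, and observe that $[-0.55,1]\subset R_{4}$ from Corollary 2, so $[-0.55,1]\subset R_{4}\cap[a_{4},\log 2/\log(5/4)]\subset R_{5}$. One small caution: the auxiliary verification you sketch that $a_{4}\le -0.55$ via Lemma 4 and $x_{4,0}<-0.55$ is pointed in the wrong direction, since Lemma 4 gives $a_{4}\ge x_{4,0}$ (a lower bound on $a_{4}$, not an upper one); the correct and immediate justification — which you also invoke — is simply that $[-0.55,1]\subset R_{4}\subset[a_{4},b_{4}]$ by definition of $a_{4}$ and $b_{4}$, which forces $a_{4}\le-0.55$ with no further computation.
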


\begin{proof}
From Corollary 2 we have $\left[ -0.55,1\right] \subset R_{4}\subset \left[
a_{4},b_{4}\right] $. On the other hand, as $5$ is a prime number, because
of the above proposition, we get
\begin{equation*}
R_{4}\cap \left[ a_{4},\frac{\log 2}{\log (1+\frac{1}{4})}\right] \subset
R_{5}.
\end{equation*}%
Now, noticing
\begin{equation*}
\left[ -0.55,1\right] \subset \left[ a_{4},\frac{\log 2}{\log (1+\frac{1}{4}%
)}\right],
\end{equation*}
the result follows.
\end{proof}

\section{Density properties of $G_{n}(s)$}

In this section we are going to study some properties of the functions $%
G_{n}(s)$ about the existence of zeros arbitrarily close to any right-line
parallel to the imaginary axis contained in its critical strip. Firstly,
taking into account the definition of $x_{n,0}$ and $x_{n,1}$, we observe
that in the strip
$$\left\{ s=\sigma +it:x_{n,0}<\sigma <x_{n,1}\right\},$$
the functions $G_{n}(s)$ satisfy the following geometric
principle.

\begin{lemma}
Let $n$ be an integer greater than $2$. Then, for arbitrary $s=\sigma +it$
inside the strip $\left\{ s\in
\mathbb{C}
:x_{n,0}\leq \mbox{Re}s\leq x_{n,1}\right\} $, it follows that
\begin{equation}
\left\vert j^{s}\right\vert \leq \sum_{k=1;k\neq j}^{n}\left\vert
k^{s}\right\vert \mbox{, for every }j=1,2,...,n.  \tag{3.1}
\end{equation}
\end{lemma}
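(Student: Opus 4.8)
The plan is to fix $j\in\{1,\dots,n\}$ and $s=\sigma+it$ with $x_{n,0}\le\sigma\le x_{n,1}$, and to show the inequality $|j^{s}|\le\sum_{k\neq j}|k^{s}|$ by reducing it, via $|k^{s}|=k^{\sigma}$, to the purely real inequality
\begin{equation*}
j^{\sigma}\le\sum_{k=1;\,k\neq j}^{n}k^{\sigma}\quad\text{for every }j=1,\dots,n.
\end{equation*}
So the whole content is the real-variable statement, and the imaginary part $t$ plays no role whatsoever. I would split the verification into the two "extreme" indices $j=1$ and $j=n$, and then handle $2\le j\le n-1$ by a monotonicity/comparison argument against those two cases.

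First I would dispatch $j=1$ and $j=n$ directly from Definition 3. For $j=n$: by definition $x_{n,1}=\sup\{\sigma:1+2^{\sigma}+\dots+(n-1)^{\sigma}\ge n^{\sigma}\}$, and the function $h(\sigma):=1+2^{\sigma}+\dots+(n-1)^{\sigma}-n^{\sigma}$ is continuous with $h(\sigma)\to n-1>0$ as $\sigma\to-\infty$, so $h$ is non-negative on $(-\infty,x_{n,1}]$; hence $n^{\sigma}\le 1+2^{\sigma}+\dots+(n-1)^{\sigma}$ throughout the strip, which is exactly (3.1) for $j=n$. For $j=1$: by definition $x_{n,0}=\inf\{\sigma:1\le 2^{\sigma}+\dots+n^{\sigma}\}$, and $g(\sigma):=2^{\sigma}+\dots+n^{\sigma}$ is strictly increasing, so $g(\sigma)\ge 1$ for all $\sigma\ge x_{n,0}$; hence $1\le 2^{\sigma}+\dots+n^{\sigma}$ on the strip, which is (3.1) for $j=1$. (One should note here that the sup and inf are attained, or at least that the defining inequalities persist on the closed interval, using continuity and the monotonicity of $g$; this is where a little care is needed.)

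Next, for an intermediate index $2\le j\le n-1$, I would argue that the inequality for $j$ follows from the one for $j=n$. Indeed, since $j\le n-1<n$ we have $j^{\sigma}\le n^{\sigma}$ when $\sigma\ge 0$, and using the already-established bound $n^{\sigma}\le 1+2^{\sigma}+\dots+(n-1)^{\sigma}$ we can peel off the term $j^{\sigma}$ from the right-hand side: write $n^{\sigma}\le 1+\sum_{k=2}^{n-1}k^{\sigma}$, so $j^{\sigma}\le n^{\sigma}\le 1+\sum_{2\le k\le n-1,\,k\neq j}k^{\sigma}+j^{\sigma}$, which is not yet what we want. The cleaner route is: from $j=n$ we have $j^{\sigma}\le n^{\sigma}\le\sum_{k=1}^{n-1}k^{\sigma}=\sum_{k\neq j,\,k\le n-1}k^{\sigma}+j^{\sigma}$ — still circular. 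So instead I would compare directly: for $\sigma\ge 0$, $\sum_{k\neq j}k^{\sigma}\ge\sum_{k\neq j,\,k>j}k^{\sigma}$ contains at least the term $(j+1)^{\sigma}+\dots$, and more robustly one uses $\sum_{k=1}^{n}k^{\sigma}-j^{\sigma}\ge(n-1)\cdot 1=n-1\ge j^{\sigma}$ is false in general; the genuinely correct argument is that for $2\le j\le n-1$ the bound is weaker than both endpoint bounds, namely $j^{\sigma}\le\max(1,n^{\sigma})$ combined with the fact that dropping $j$ from $\{1,\dots,n\}$ still leaves both the term $1$ and a term $\ge n^{\sigma}$-worth of mass when $\sigma\ge0$, and the term $1$ dominates $j^{\sigma}$ when $\sigma\le0$. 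I would therefore split on the sign of $\sigma$: if $\sigma\le0$ then $j^{\sigma}\le 1\le\sum_{k\neq j}k^{\sigma}$ (using the $k=1$ term and $j\ge 2$, with equality only possible in degenerate cases), and if $\sigma\ge0$ then $j^{\sigma}\le n^{\sigma}\le 1+2^{\sigma}+\dots+(n-1)^{\sigma}$, and since $j\le n-1$ the right side is $\ge\sum_{1\le k\le n-1,\,k\neq j}k^{\sigma}$, and adjoining the missing $n^{\sigma}\ge 0$ gives $j^{\sigma}\le\sum_{k\neq j}k^{\sigma}$.

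The main obstacle is really the bookkeeping in the intermediate case $2\le j\le n-1$: making sure the chain of inequalities actually isolates the correct $j^{\sigma}$ term without accidentally using $j^{\sigma}$ on the right-hand side, and correctly splitting on $\mathrm{sgn}\,\sigma$ so that in the $\sigma\le 0$ branch the single constant term $1$ suffices (this is where $j\ge 2$, equivalently $j\neq 1$, is used) and in the $\sigma\ge 0$ branch the endpoint estimate $n^{\sigma}\le 1+2^{\sigma}+\dots+(n-1)^{\sigma}$ from the $j=n$ case is invoked (this is where $j\le n-1$, equivalently $j\neq n$, is used). The only other point requiring attention is justifying that the defining supremum and infimum in Definition 3 give non-strict inequalities valid on the full closed interval $[x_{n,0},x_{n,1}]$, which follows from continuity of the relevant sums together with the strict monotonicity of $\sigma\mapsto 2^{\sigma}+\dots+n^{\sigma}$.
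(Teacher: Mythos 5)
Your proposal is essentially the same as the paper's: reduce to the real inequality $j^{\sigma}\le\sum_{k\ne j}k^{\sigma}$, then handle $j=1$ and $j=n$ directly from Definition 3 (the defining sets are indeed half-lines because $\sigma\mapsto 2^{\sigma}+\dots+n^{\sigma}$ is strictly increasing, and $\sigma\mapsto(1/n)^{\sigma}+\dots+((n-1)/n)^{\sigma}$ is strictly decreasing, so the two bounds persist on $[x_{n,0},x_{n,1}]$), and treat $2\le j\le n-1$ separately.

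Where you diverge — and where you make life unnecessarily hard for yourself — is the intermediate case. You eventually split on the sign of $\sigma$ and, in the branch $\sigma\ge 0$, invoke the $j=n$ endpoint bound $n^{\sigma}\le 1+2^{\sigma}+\dots+(n-1)^{\sigma}$, which ties that branch to the strip and produces the circular-looking bookkeeping you flag yourself. None of that is needed: since $j\neq 1$ and $j\neq n$, both $1=1^{\sigma}$ and $n^{\sigma}$ already appear as summands on the right-hand side $\sum_{k\ne j}k^{\sigma}$, and $j^{\sigma}\le\max(1,n^{\sigma})$ for every real $\sigma$. Thus for $2\le j\le n-1$ the inequality (3.1) holds for all $s\in\mathbb{C}$, with no reference to $x_{n,0}$ or $x_{n,1}$ — this is precisely the paper's one-line remark for that case. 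You should streamline the intermediate case accordingly and drop the appeal to the $j=n$ bound there; otherwise the argument is correct.
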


\begin{proof}
For $j=1$ and $j=n$, inequality (3.1) is immediate by virtue of the
definitions of $x_{n,0}$ and $x_{n,1}$, respectively. For any $j\neq 1,n$,
inequality (3.1) is true for arbitrary $s\in
\mathbb{C}
$. Then the lemma follows.
\end{proof}

\begin{corollary}
For any $\sigma \in \left[ a_{n},b_{n}\right] $ there exists at least one $n$%
-sided polygon whose sides have lengths $1$, $2^{\sigma }$, ..., $n^{\sigma
} $.
\end{corollary}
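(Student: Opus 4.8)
The plan is to read this off directly from Lemma 7 and Lemma 4, using the classical characterization of the side-lengths of a closed polygon. Recall the \emph{polygon inequality}: given $n\ge 3$ positive numbers $\ell_{1},\dots,\ell_{n}$, there exists a (possibly degenerate) closed $n$-gon in the plane having exactly these numbers as its side-lengths if and only if each $\ell_{j}$ is at most the sum of the remaining ones, equivalently $\max_{j}\ell_{j}\le\sum_{k\neq j}\ell_{k}$. This is the natural generalization of the triangle inequality and is the only ingredient beyond what has already been proved.

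First I would fix $\sigma\in\left[a_{n},b_{n}\right]$. By Lemma 4, $\left[a_{n},b_{n}\right]\subset\left[x_{n,0},x_{n,1}\right]$, so $\sigma\in\left[x_{n,0},x_{n,1}\right]$ and I may apply Lemma 7 at the real point $s=\sigma$. Since $\left\vert k^{\sigma}\right\vert=k^{\sigma}$ for every $k$, inequality (3.1) becomes
\begin{equation*}
j^{\sigma}\le\sum_{k=1;\,k\neq j}^{n}k^{\sigma},\qquad j=1,2,\dots,n.
\end{equation*}
In particular this holds for the index $j$ for which $j^{\sigma}$ is largest (namely $j=n$ when $\sigma\ge0$ and $j=1$ when $\sigma\le0$, although the case split is not needed). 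Hence the numbers $1,2^{\sigma},\dots,n^{\sigma}$ satisfy the polygon inequality, and therefore they are the side-lengths of some $n$-sided polygon, which is precisely the assertion. For completeness I would append a one-line proof of the polygon-existence fact: choosing a longest length $\ell_{j_{0}}$ and laying the other $n-1$ segments head to tail, the distance between the two free endpoints runs continuously through the entire interval $\left[d_{\min},d_{\max}\right]$ as the angles between consecutive segments vary, where $d_{\max}=\sum_{k\neq j_{0}}\ell_{k}\ge\ell_{j_{0}}$ and $d_{\min}\le\max_{k\neq j_{0}}\ell_{k}\le\ell_{j_{0}}$; by the intermediate value theorem some configuration realizes distance exactly $\ell_{j_{0}}$, and then the remaining segment of length $\ell_{j_{0}}$ closes up the polygon.

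There is essentially no obstacle here. The only mild points of care are that Lemma 7 must be invoked at a real argument, so that the moduli coincide with the prescribed lengths, and that when equality holds in the polygon inequality — which, by strict monotonicity of $2^{\sigma}+\dots+n^{\sigma}$ and of $n^{\sigma}/(1+2^{\sigma}+\dots+(n-1)^{\sigma})$, can happen only at $\sigma=x_{n,0}$ or $\sigma=x_{n,1}$ — the resulting $n$-gon degenerates to a collinear configuration; the statement as phrased admits such degenerate polygons.
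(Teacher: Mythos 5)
Your proposal is correct and follows essentially the same route as the paper: invoke Lemma 4 to place $\sigma$ in $[x_{n,0},x_{n,1}]$, then Lemma 7 to get the polygon inequalities, and finally the classical existence fact for polygons with prescribed side-lengths (which the paper simply cites from Moreno's paper, while you sketch the intermediate-value argument). The extra remarks about degeneracy and the proof of the polygon-existence fact are sound additions but do not change the structure of the argument.
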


\begin{proof}
Because of Lemma 4, we have $\left[ a_{n},b_{n}\right] \subset \left[
x_{n,0},x_{n,1}\right] $. Now, taking into account the preceding lemma, $%
\sigma $ is a real number such that the lengths $1,2^{\sigma }, ..., n^{\sigma}$ satisfy all the inequalities (3.1). Hence, from $\left[ 13\mbox{%
, p. 71}\right] $, the conclusion of the corollary is valid.
\end{proof}

\begin{lemma}
Every function $G_{n}(s):=1+2^{s}+...+n^{s}$, $n\geq $ $2$, satisfies the
following properties: \newline
(a) $G_{n}(s)$ is bounded on the strip $S_{n}:=\left\{ s=\sigma
+it:a_{n}\leq \sigma \leq b_{n}\right\} $. \newline
(b) There exists some $\sigma _{0}\in \left[ a_{n},b_{n}\right] $ such that
the vertical line of equation $x=\sigma _{0}$ contains a sequence of points $%
(\sigma _{0}+iT_{j})_{j=1,2,...}$ such that
\begin{equation*}
\lim_{j\rightarrow \infty }G_{n}(\sigma _{0}+iT_{j})=0.
\end{equation*}%
(c) There exist positive numbers $\delta $ and $l$ such that on any segment
of length $l$ of the line $x=\sigma _{0}$ there is a point $\sigma _{0}+iT$
such that $\left\vert G_{n}(\sigma _{0}+iT)\right\vert \geq \delta $.
\end{lemma}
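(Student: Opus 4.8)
The plan is to treat the three parts essentially independently, but all exploiting the structure of $G_n(s)$ as an exponential sum $\sum_{m=1}^n m^s = \sum_{m=1}^n e^{\langle \mathbf c_m,\mathbf p\rangle s}$. For part (a), I would simply note that on the strip $S_n$ each term $|m^s| = m^\sigma$ is bounded, since $\sigma$ ranges over the compact interval $[a_n,b_n]$; thus $|G_n(s)| \le \sum_{m=1}^n \max(m^{a_n}, m^{b_n})$, a finite bound independent of $t$. This is the routine part and needs no cleverness.

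For part (b), the idea is to extract a suitable real projection. By the very definition of $b_n$ (and the fact that $G_n$ has infinitely many zeros, none of which, for $n\ge 3$, accumulate only on a single point), there is a zero $s_j=\sigma_j+it_j$ of $G_n$ with $\sigma_j\to\sigma_0$ for some $\sigma_0\in R_n\subset[a_n,b_n]$; actually it is cleaner to start from any $\sigma_0\in R_n$. The construction mimics the converse direction of the proof of Theorem 1: given $\sigma_0\in R_n$, Theorem 1 furnishes a vector $\mathbf x\in\mathbb R^{k_n}$ with $F_n(\sigma_0,\mathbf x)=0$, and then Kronecker's theorem (exactly as in that proof, inequality (2.4)--(2.6)) produces a real sequence $T_j\to\infty$ with $e^{\langle \mathbf c_m,\,\mathbf x - T_j\mathbf p\rangle i}\to 1$ for every $m\le n$, whence $G_n(\sigma_0+iT_j)\to F_n(\sigma_0,\mathbf x)=0$. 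So part (b) is really just a citation of the mechanism already established for Theorem 1, with $\sigma_0$ chosen to be any point of $R_n$ (nonempty since $R_n$ is a nonempty closed subset of the compact $[a_n,b_n]$, or simply because $a_n,b_n\in R_n$).

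For part (c), which I expect to be the genuine obstacle, the point is that $G_n(\sigma_0+it)$, as a function of $t$, is a (uniformly almost-periodic) nonzero exponential polynomial, so it cannot tend to $0$ along the whole line $x=\sigma_0$; its modulus has a positive "mean" in a suitable sense and cannot be uniformly small on long segments. Concretely: the function $g(t):=G_n(\sigma_0+it)=\sum_{m=1}^n m^{\sigma_0} e^{(\log m)\,t\,i}$ is a Bohr almost-periodic function whose Bohr--Fourier coefficients include the nonzero constant $1$ (the $m=1$ term), so $\lim_{L\to\infty}\frac1{2L}\int_{-L}^{L}|g(t)|^2\,dt = \sum_{m=1}^n m^{2\sigma_0}\ge 1 > 0$; hence $|g|$ cannot be eventually small, and in fact there exist $\delta>0$ and $l>0$ such that every interval of length $l$ contains a $T$ with $|g(T)|\ge\delta$. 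One clean way to get the explicit $\delta,l$: since $g$ is almost-periodic, it has a relatively dense set of $\varepsilon$-almost-periods; choosing $\varepsilon$ small compared with $|g(0)|\le n$ — wait, rather: pick any $T_0$ with $|g(T_0)|=:2\delta>0$ (possible since $g\not\equiv 0$), let $l$ be an inclusion length for the $\delta$-almost-periods $\tau$ of $g$, and then for any segment $[A,A+l]$ there is an almost-period $\tau$ with $T_0+\tau\in[A,A+l]$ (after adjusting so the almost-period lands in the right window) and $|g(T_0+\tau)|\ge |g(T_0)|-\delta=\delta$. The main thing to be careful about is the bookkeeping that turns "relatively dense almost-periods" into "every segment of length $l$", and making sure the chosen $T_0$ with $g(T_0)\ne 0$ genuinely exists — which it does because $g$ is a nonzero entire function of the single variable $t$, indeed $|g(\sigma_0+it)|\to$ does not hold identically to $0$ since e.g. $|g|^2$ has positive mean value as noted. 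I would present part (c) via this almost-periodicity argument, citing the standard Bohr theory, and would flag the almost-period-to-segment translation as the one place requiring a small amount of care.
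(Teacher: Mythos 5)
Your proposal is correct and follows essentially the same route as the paper: (a) trivial boundedness on a finite vertical strip; (b) appealing to the sufficiency direction of Theorem 1 (Kronecker) to manufacture the sequence $T_j$ on the line $x=\sigma_0$; and (c) Bohr almost-periodicity via relatively dense $\delta$-translation numbers. The only difference is that in (c) the paper takes the reference point $t=0$, where $G_n(\sigma_0)=1+2^{\sigma_0}+\cdots+n^{\sigma_0}>0$ is manifestly positive, so that $\delta=\tfrac{1}{2}G_n(\sigma_0)$ works immediately and both the mean-value argument for non-vanishing and the almost-period-to-segment bookkeeping you flagged become unnecessary.
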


\begin{proof}
The case $n=2$ is trivial. Indeed, $S_{2}$ coincides with the imaginary axis
so $\left\vert G_{2}(s)\right\vert \leq 2$ for all $s\in S_{2}$ and then (a)
holds. The line of equation $x=0$ contains all the zeros of $G_{2}(s)$,
namely, the imaginary numbers $s=\frac{i\pi (2k+1)}{\ln 2}$, $k\in
\mathbb{Z}
$, so (b) is true. Moreover, by taking $\delta =2$ and an arbitrary $l>%
\frac{2\pi }{\ln 2}$, any segment of length $l$ of the\ right-line $x=0$
contains a point $iT$, with $T=\frac{i\pi 2k}{\ln 2}$ for some integer $k
$, such that $\left\vert G_{n}(iT)\right\vert \geq 2$. Hence, for $n=2$, the
result follows. Now let us assume that $n>2$.

It is clear that $G_{n}(s)$ is bounded on any finite vertical strip, so, in
particular, $G_{n}(s)$ is bounded on $S_{n}$. Therefore part (a) is proved.

Consider an arbitrary zero $s_{0}=\sigma _{0}+it_{0}$ of $G_{n}(s)$. By
applying Theorem 1, the vector
\begin{equation*}
t_{0}\mathbf{p=}t_{0}(\log p_{1},\log p_{2},...,\log p_{k_{n}})
\end{equation*}%
is such that $F_{n}(\sigma _{0},t_{0}\mathbf{p})=0$, where $F_{n}$ is the
function defined by (2.1). Now, by following the proof of the sufficiency of
the mentioned Theorem 1, there exists a sequence of points $(\sigma
_{0}+iT_{j})_{j=1,2,...}$ on the line $x=\sigma _{0}$ such that
\begin{equation*}
\lim_{j\rightarrow \infty }G_{n}(\sigma _{0}+iT_{j})=0.
\end{equation*}%
Consequently, part (b) has been shown.

Finally, since $G_{n}(\sigma _{0})>0,$ and taking into account that $G_{n}(s)
$ is an almost-periodic function, given $\delta =\frac{G_{n}(\sigma _{0})}{2%
}$, there exists a real number $l=l(\delta )$\ such that every
interval of length $l$\ on the imaginary axis contains at least one
translation number $T$,\ associated with $\delta $,\ satisfying the
inequality
\begin{equation*}
\left\vert G_{n}(s+iT)-G_{n}(s)\right\vert \leq \delta \mbox{, for all }s\in
\mathbb{C}
.
\end{equation*}%
In particular, for $s=\sigma _{0}$, one has $\left\vert G_{n}(\sigma
_{0}+iT)-G_{n}(\sigma _{0})\right\vert \leq \delta $ and, according to the
choice of $\delta $, it implies that
\begin{equation*}
\left\vert G_{n}(\sigma _{0}+iT)\right\vert \geq \delta .
\end{equation*}%
Now, the proof of the lemma is completed.
\end{proof}

The density properties on the real projections of the zeros of $G_{n}(s)$
are given by means of the following result.

\begin{theorem}
Let $\sigma _{0}$ be a point of the critical interval $\left[ a_{n},b_{n}%
\right] $ of $G_{n}(s)$, $n\geq 2$, verifying properties (b) and (c) of the
above lemma. Then $G_{n}(s)$ has zeros in the strip
\begin{equation*}
\left\{ s\in
\mathbb{C}
:\sigma _{0}-\delta \leq \mbox{Re}s\leq \sigma _{0}+\delta
\right\},
\end{equation*}%
for an arbitrary $\delta $ satisfying $0\leq \delta \leq \min \left\{ \sigma
_{0}-a_{n},b_{n}-\sigma _{0}\right\} $.
\end{theorem}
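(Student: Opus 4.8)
The plan is to argue by contradiction, applying a normal families / Hurwitz argument to suitable vertical translates of $G_n$. We may assume $\delta>0$: for $\delta=0$ the claim is that $G_n$ has a zero with real part exactly $\sigma_0$, which is the case whenever $\sigma_0$ is the real part of a zero of $G_n$ — and this is how properties (b) and (c) arise from Lemma 11 — so we henceforth take $\delta>0$. Write $B^{\circ}:=\{s\in\mathbb{C}:\sigma_0-\delta<\mbox{Re}\,s<\sigma_0+\delta\}$, an open vertical strip which, by the hypothesis $\delta\le\min\{\sigma_0-a_n,b_n-\sigma_0\}$, is contained in the critical strip $S_n$. Suppose, for contradiction, that $G_n$ has no zero in $B^{\circ}$, i.e. no zero whose real part lies strictly between $\sigma_0-\delta$ and $\sigma_0+\delta$.

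The first step is to manufacture a nontrivial ``limit function'' on $B^{\circ}$. By property (b) there is a sequence $(\sigma_0+iT_j)_{j\ge1}$ with $G_n(\sigma_0+iT_j)\to0$. Since $G_n(s+iT_j)=\sum_{m=1}^{n}m^{s}e^{iT_j\log m}$ is a fixed finite exponential sum whose coefficients $e^{iT_j\log m}$ lie on the unit circle, after passing to a subsequence we may assume $e^{iT_j\log m}\to e^{i\psi_m}$ for each $m=1,\dots,n$, and then $G_n(\cdot+iT_j)\to G^{*}$ uniformly on compact subsets of $\mathbb{C}$, where $G^{*}(s):=\sum_{m=1}^{n}m^{s}e^{i\psi_m}$ is entire. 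Evaluating at $s=\sigma_0$ gives $G^{*}(\sigma_0)=\lim_j G_n(\sigma_0+iT_j)=0$, so $G^{*}$ vanishes at a point of $B^{\circ}$. On the other hand, property (c) supplies constants $\delta',\,l>0$ such that for every $j$ the segment $\{\sigma_0+it:T_j\le t\le T_j+l\}$ contains a point $\sigma_0+i(T_j+r_j)$, $r_j\in[0,l]$, with $|G_n(\sigma_0+i(T_j+r_j))|\ge\delta'$; passing to a further subsequence with $r_j\to r\in[0,l]$ and using uniform convergence on the compact segment $\{\sigma_0+it:0\le t\le l\}$, one gets $|G^{*}(\sigma_0+ir)|\ge\delta'>0$, so $G^{*}\not\equiv0$.

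Next I would invoke Hurwitz's theorem on the (connected) domain $B^{\circ}$. Each translate $G_n(\cdot+iT_j)$ is holomorphic and zero-free on $B^{\circ}$, since a zero $s_{*}\in B^{\circ}$ of $G_n(\cdot+iT_j)$ would yield a zero $s_{*}+iT_j$ of $G_n$ lying in the same vertical strip $B^{\circ}$, against our assumption. As $G_n(\cdot+iT_j)\to G^{*}$ uniformly on compact subsets of $B^{\circ}$ and $G^{*}\not\equiv0$, Hurwitz's theorem forces $G^{*}$ to be zero-free on $B^{\circ}$, contradicting $G^{*}(\sigma_0)=0$ with $\sigma_0\in B^{\circ}$. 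Hence $G_n$ must have a zero in $B^{\circ}$, and therefore in the strip $\{\sigma_0-\delta\le\mbox{Re}\,s\le\sigma_0+\delta\}$, as claimed. To upgrade this to infinitely many zeros in that strip, I would fix one such zero $s_1\in B^{\circ}$, pick $\rho>0$ with $\overline{D}(s_1,\rho)\subset B^{\circ}$ and $\eta:=\min_{|s-s_1|=\rho}|G_n(s)|>0$, and use that $G_n$ is uniformly almost periodic on every finite vertical strip (being a finite exponential sum), so its $\eta$-translation numbers form a relatively dense, hence unbounded, set: for arbitrarily large such $\tau$ one has $|G_n(s+i\tau)-G_n(s)|<\eta\le|G_n(s)|$ on $|s-s_1|=\rho$, and Rouché's theorem gives a zero of $G_n$ in $D(s_1+i\tau,\rho)\subset B^{\circ}$.

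The step I expect to be the real crux is the construction of the nontrivial limit $G^{*}$. Property (b) alone is entirely compatible with $G^{*}\equiv0$, in which situation the Hurwitz step yields nothing; it is exactly the uniform lower bound provided by property (c), transported to $G^{*}$ by the compactness argument on a segment of fixed length $l$, that excludes the trivial limit and makes the argument work. The remaining ingredients — the diagonal subsequence extraction, the translation-invariance of ``zero-free in a vertical strip,'' and the final Rouché argument — are routine.
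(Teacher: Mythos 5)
Your argument is correct, but it follows a genuinely different route from the paper: the paper's entire proof is a one-line citation, observing that $G_n$ and $\sigma_0$ satisfy properties (a), (b), (c) of Lemma~8 and then invoking a theorem attributed to H.~Bohr via $[13,\ \mbox{p.~74}]$. What you have done is reconstruct, from scratch, a self-contained proof of precisely the instance of Bohr's result the paper needs: extract a convergent subsequence of coefficient vectors $\left(m^{iT_j}\right)_{m\le n}$ on the torus so the vertical translates $G_n(\cdot+iT_j)$ converge locally uniformly to an exponential sum $G^{*}$; use property (b) to get $G^{*}(\sigma_0)=0$; use property (c), transported to a fixed compact segment, to rule out $G^{*}\equiv 0$; and then apply Hurwitz on the open strip $B^{\circ}$ to reach a contradiction. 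You correctly identify that (c) is the hinge of the whole thing, and you are also right that property (a) is not needed in your version, since for a finite Dirichlet polynomial the normality of the translates comes for free (in Bohr's general theorem (a) is what supplies Montel); so your specialization is in fact a bit more elementary. Two minor points: your dismissal of $\delta=0$ is slightly glib, since property (b) by itself does not force an actual zero on the line $x=\sigma_0$, but in the paper's usage $\sigma_0$ is always the real part of a genuine zero (that is how (b) is produced in Lemma~8), so no harm is done; and the closing Rouch\'e/almost-periodicity step producing infinitely many zeros is sound but goes beyond what the statement strictly requires.
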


\begin{proof}
The function $G_{n}(s)$ and $\sigma _{0}$ satisfy the properties (a), (b)
and (c) of the previous lemma. Then, it suffices to apply the result
attributed in $\left[ 13\mbox{, p. 74}\right] $ to H. Bohr for demonstrating
the validity of the theorem.
\end{proof}

A more detailed result about the existence of zeros of the $G_{n}(s)$
functions in a strip is provided in Section 4 of this paper.

\section{A characterization of the density of the real projections of the
zeros of $G_{n}(s)$ by means of level curves}

In this section a new characterization of the sets $R_{n}:=\overline{\left\{
\mbox{Re}s:G_{n}(s)=0\right\} }$ in terms of the old concept of level curve $%
\left[ 15\right] $ is shown. Firstly we examine an important
property of the $G_{n}(s)$ functions.

\begin{lemma}
Given $\sigma \in
\mathbb{R}
$, for each $n\geq 2$ the function $G_{n}(s):=1+2^{s}+...+n^{s}$ satisfies%
\begin{equation*}
Max\left\{ \left\vert G_{n}(s)\right\vert :\mbox{Re}s\leq \sigma
\right\} =G_{n}(\sigma ).
\end{equation*}%
Furthermore, for $n>2$, $s=\sigma $ is the unique point where the maximum is
attained.
\end{lemma}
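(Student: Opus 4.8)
The plan is to bound $|G_{n}(s)|$ directly by the triangle inequality together with the monotonicity of $x\mapsto k^{x}$. Writing $s=u+it$ with $u=\mathrm{Re}\,s\le\sigma$, one has $|k^{s}|=|k^{u}e^{it\log k}|=k^{u}$ for every $k\in\{1,\dots,n\}$, so that
\[
|G_{n}(s)|\le\sum_{k=1}^{n}|k^{s}|=\sum_{k=1}^{n}k^{u}\le\sum_{k=1}^{n}k^{\sigma}=G_{n}(\sigma),
\]
where the last inequality uses $k^{u}\le k^{\sigma}$ for $k\ge1$. Since $G_{n}(\sigma)=1+2^{\sigma}+\cdots+n^{\sigma}$ is a positive real number, $|G_{n}(\sigma)|=G_{n}(\sigma)$; hence the maximum over the half-plane $\mathrm{Re}\,s\le\sigma$ is attained and equals $G_{n}(\sigma)$, which gives the first assertion.

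For the uniqueness statement when $n>2$, I would start from a point $s=u+it$ with $u\le\sigma$ and $|G_{n}(s)|=G_{n}(\sigma)$ and note that both inequalities in the display above must then be equalities. Equality in the second one means $\sum_{k=2}^{n}k^{u}=\sum_{k=2}^{n}k^{\sigma}$; since $k^{u}\le k^{\sigma}$ for each $k$ and $x\mapsto 2^{x}$ is strictly increasing, this is impossible unless $u=\sigma$, so necessarily $\mathrm{Re}\,s=\sigma$. Next, with $u=\sigma$, equality in the triangle inequality $\bigl|\sum_{k=1}^{n}k^{s}\bigr|=\sum_{k=1}^{n}|k^{s}|$ forces the nonzero complex numbers $1,2^{s},\dots,n^{s}$ to lie on a common ray through the origin; as the term $1$ is a positive real, each $k^{s}=k^{\sigma}e^{it\log k}$ must be a positive real, i.e.\ $t\log k\in 2\pi\mathbb{Z}$ for all $k=1,\dots,n$. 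Taking $k=2$ and $k=3$ (both $\le n$ since $n\ge3$) yields $t\log 2=2\pi a$ and $t\log 3=2\pi b$ with $a,b\in\mathbb{Z}$; if $t\ne0$ this forces $\log 3/\log 2=b/a\in\mathbb{Q}$, i.e.\ $2^{b}=3^{a}$, which is impossible by unique factorization. Hence $t=0$ and $s=\sigma$.

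I do not expect any genuine obstacle here: the argument is elementary. The only points that need a little care are the identification of the equality case of the triangle inequality in $\mathbb{C}$ and the use of the $\mathbb{Q}$-linear independence of $\log 2$ and $\log 3$ to discard $t\ne0$. It is worth recording that this last step really fails for $n=2$, since $|1+2^{s}|=1+2^{\sigma}$ already at every $s=\sigma+2\pi i m/\log 2$, $m\in\mathbb{Z}$; this is precisely why uniqueness is claimed only for $n>2$.
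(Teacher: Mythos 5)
Your proof is correct and, on the first assertion, follows exactly the same line as the paper. On uniqueness the paper does something slightly more roundabout: it argues that it suffices to prove $\bigl|1+2^{\sigma+it}+3^{\sigma+it}\bigr|<1+2^{\sigma}+3^{\sigma}$ for $t\neq 0$ (the remaining terms contribute at most $\sum_{k=4}^{n}k^{\sigma}$ by the triangle inequality, and $x<\sigma$ is already excluded by strict monotonicity of $G_{n}$), and then establishes that strict inequality by writing $1=\tfrac12+\tfrac12$, splitting the sum as $\bigl(\tfrac12+2^{\sigma+it}\bigr)+\bigl(\tfrac12+3^{\sigma+it}\bigr)$, and showing that at least one of the two resulting two-term triangle inequalities must be strict; equality in both would force $2^{\sigma+it}$ and $3^{\sigma+it}$ to be positive reals, which again contradicts the irrationality of $\log 2/\log 3$. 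You instead apply the equality case of the $n$-term triangle inequality directly to the whole sum, conclude that all $k^{s}$ lie on the positive real ray, and read off $t\log 2, t\log 3\in 2\pi\mathbb{Z}$ to reach the same contradiction. Both arguments hinge on exactly the same arithmetic fact ($\log 3/\log 2\notin\mathbb{Q}$ via unique factorization); your version is a bit more transparent and avoids the ad hoc $\tfrac12+\tfrac12$ trick, while the paper's decomposition isolates only the terms $1,2^{s},3^{s}$, which makes the sufficiency reduction explicit. You also handle the $u<\sigma$ case explicitly, which the paper leaves implicit. The closing remark about $n=2$ failing is correct and matches the paper's framing.
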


\begin{proof}
Given $\sigma $, let $s=x+iy$ be any complex number with $x\leq \sigma $.
Since
\begin{equation*}
\left\vert G_{n}(s)\right\vert \leq G_{n}(x)\leq G_{n}(\sigma ),
\end{equation*}%
it can be deduced that
\begin{equation*}
Max\left\{ \left\vert G_{n}(s)\right\vert :\mbox{Re}s\leq \sigma
\right\} =G_{n}(\sigma ),
\end{equation*}%
so the first part of the lemma follows. Now let us assume that $n>2$, to
prove that $s=\sigma $ is the unique point where the maximum is attained it
suffices to show that
\begin{equation}
\left\vert 1+2^{\sigma +it}+3^{\sigma +it}\right\vert <1+2^{\sigma
}+3^{\sigma }\mbox{ for all real }t\neq 0.  \tag{4.1}
\end{equation}%
Indeed,
\begin{equation*}
\left\vert 1+2^{\sigma +it}+3^{\sigma +it}\right\vert =\left\vert \frac{1}{2}%
+2^{\sigma +it}+\frac{1}{2}+3^{\sigma +it}\right\vert \leq \left\vert \frac{1%
}{2}+2^{\sigma +it}\right\vert +\left\vert \frac{1}{2}+3^{\sigma
+it}\right\vert.
\end{equation*}%
Now we claim that at least one of the two inequalities
\begin{equation*}
\left\vert \frac{1}{2}+2^{\sigma +it}\right\vert \leq \frac{1}{2}+2^{\sigma }%
,\ \left\vert \frac{1}{2}+3^{\sigma +it}\right\vert \leq \frac{1}{2}%
+3^{\sigma }
\end{equation*}%
must be strict. Otherwise, there will be two positive $\lambda $ and $\mu $
such that
$$2^{\sigma +it}=\frac{\lambda}{2},\ \ 3^{\sigma +it}=\frac{\mu}{2}.$$ However there are integers $k$, $l\neq 0$ such that $\frac{\log 2}{%
\log 3}=\frac{k}{l}$ which contradicts the Fundamental Theorem of
Arithmetic. Hence (4.1) is true and the proof of the lemma is
completed.
\end{proof}

Following $\left[ 15\mbox{, p.121}\right] $ we recall the concept of level
curves.

\begin{definition}
Given an entire function $f(z)$ and a non-negative constant $k$, the curves
defined by the equation
\begin{equation}
\left\vert f(x+iy)\right\vert =k  \tag{4.2}
\end{equation}%
are called level curves of order $k$.
\end{definition}

For example, the level curves of the exponential function $e^{z}$ of order $%
k>0$ are the vertical lines of equations $x=\log k$. The level curves of $%
G_{2}(z):=1+2^{z}$, $z=x+iy$, are given by the equation
\begin{equation*}
1+2^{x+1}\cos (y\log 2)+2^{2x}=k^{2}\mbox{, for every }k>0,
\end{equation*}%
which does not contain any vertical line. Therefore, because of Lemma 11,
the level curves corresponding to the functions $G_{n}(z)$, $n\geq 2$, of
order $k>0$, do not contain any vertical line.

We are going to prove that if equation (4.2) has at least one solution, say $%
z_{0}=x_{0}+iy_{0}$, then there exists only one level curve in a certain
neighborhood of $(x_{0},y_{0})$ that passes through the point $(x_{0},y_{0})$%
, provided that $f^{\prime }(z_{0})\neq 0$.

\begin{lemma}
Let $f(z)$ be an entire function, $k>0$ and $z_{0}=x_{0}+iy_{0}$ a point
satisfying the equation $\left\vert f(z)\right\vert =$ $k$ . Then, if $%
f^{\prime }(z_{0})\neq 0$, in certain neighborhood of $(x_{0},y_{0})$ there
exists only one level curve, either defined by a function $\varphi $ or by a
function $\psi $, both of class $\mathcal{C}^{\infty }$, passing through the
point $(x_{0},y_{0})$ and such that $(\varphi (y),y)$ or $(x,\psi (x))$
satisfy (4.2) on certain neighborhoods of $y$ and $x$, respectively.
\end{lemma}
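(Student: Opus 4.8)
The plan is to recognize this as a direct application of the implicit function theorem to the real-valued function $\Phi(x,y) := |f(x+iy)|^2 - k^2$, which is of class $\mathcal{C}^\infty$ on $\mathbb{R}^2$ since $f$ is entire (hence $|f|^2 = f\overline{f}$ is a real-analytic function of $x,y$). First I would compute the gradient of $\Phi$ at $(x_0,y_0)$. Writing $f = u + iv$ with $u,v$ the real and imaginary parts, we have $\Phi = u^2 + v^2 - k^2$, so $\partial_x \Phi = 2(u u_x + v v_x)$ and $\partial_y \Phi = 2(u u_y + v v_y)$. Using the Cauchy–Riemann equations $u_x = v_y$, $u_y = -v_x$, together with $f'(z_0) = u_x + i v_x$, a short calculation gives $\partial_x \Phi(x_0,y_0) = 2\,\mathrm{Re}\big(\overline{f(z_0)} f'(z_0)\big)$ and $\partial_y \Phi(x_0,y_0) = -2\,\mathrm{Im}\big(\overline{f(z_0)} f'(z_0)\big)$, so that $\partial_x \Phi - i\,\partial_y \Phi = 2\,\overline{f(z_0)}\,f'(z_0)$ at the point (up to the sign convention; the precise constant is irrelevant).

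The key observation is then that $|\nabla \Phi(x_0,y_0)|^2 = \big(\partial_x\Phi\big)^2 + \big(\partial_y\Phi\big)^2 = 4\,|f(z_0)|^2\,|f'(z_0)|^2 = 4 k^2 |f'(z_0)|^2$, which is strictly positive because $k > 0$ and, by hypothesis, $f'(z_0) \neq 0$. Hence $\nabla\Phi(x_0,y_0) \neq (0,0)$, so at least one of the two partial derivatives is nonzero. If $\partial_x \Phi(x_0,y_0) \neq 0$, the implicit function theorem provides a neighborhood of $(x_0,y_0)$ and a $\mathcal{C}^\infty$ function $\varphi$ with $\varphi(y_0) = x_0$ such that, in that neighborhood, the solution set of $\Phi(x,y) = 0$ — equivalently of $|f(x+iy)| = k$ — coincides with the graph $\{(\varphi(y), y)\}$; in particular $(\varphi(y), y)$ satisfies (4.2) for $y$ near $y_0$, and this level curve through $(x_0,y_0)$ is unique in that neighborhood. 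Symmetrically, if $\partial_y \Phi(x_0,y_0) \neq 0$ we obtain a $\mathcal{C}^\infty$ function $\psi$ with $\psi(x_0) = y_0$ and the local solution set equal to $\{(x, \psi(x))\}$. (If both partials are nonzero, either representation is available.) The smoothness class $\mathcal{C}^\infty$ is automatic since $\Phi$ is $\mathcal{C}^\infty$.

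The only mildly delicate point — which I would state carefully rather than belabor — is the passage from "$|f(x_0+iy_0)| = k$ and $f'(z_0)\neq 0$" to "$\nabla \Phi \neq 0$"; this is exactly where the hypotheses $k>0$ and $f'(z_0)\neq 0$ are both used, and it is the one computational step worth writing out (the identity $(\partial_x\Phi)^2 + (\partial_y\Phi)^2 = 4|f(z_0)|^2|f'(z_0)|^2$, obtained via the Cauchy–Riemann equations). Everything else is the standard implicit function theorem, so there is no real obstacle; the proof is short. One should perhaps also remark that the equation $|f|=k$ and $|f|^2 = k^2$ define the same set since $k \geq 0$, so working with the smooth function $\Phi$ loses nothing.
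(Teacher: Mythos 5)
Your proof is correct and follows essentially the same route as the paper: both work with $\Phi(x,y)=u^2+v^2-k^2$, use the Cauchy--Riemann equations to show $\nabla\Phi(x_0,y_0)\neq 0$ from $k>0$ and $f'(z_0)\neq 0$, and then invoke the implicit function theorem. The only cosmetic difference is that you compute $|\nabla\Phi|^2=4k^2|f'(z_0)|^2$ directly via the identity $\partial_x\Phi-i\,\partial_y\Phi=2\overline{f(z_0)}f'(z_0)$, whereas the paper argues by contradiction through the determinant ($=-k^2$) of a $2\times 2$ linear system in $u_x,v_x$; the underlying computation is the same.
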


\begin{proof}
\ By setting $f=u+iv$, equation (4.2) can be written as $\Phi (x,y)=0$,
where $\Phi (x,y):=u^{2}+v^{2}-k^{2}$. Then, if we assume $\frac{\partial
\Phi }{\partial x}(x_{0},y_{0})=\frac{\partial \Phi }{\partial y}%
(x_{0},y_{0})=0$, it follows that
\begin{equation}
\left.
\begin{array}{c}
2u\frac{\partial u}{\partial x}+2v\frac{\partial v}{\partial x}=0 \\
2u\frac{\partial u}{\partial y}+2v\frac{\partial v}{\partial y}=0%
\end{array}%
\right\}.  \tag{4.3}
\end{equation}%
Now, by virtue of Cauchy-Riemann equations, the expression (4.3) becomes
\begin{equation}
\left.
\begin{array}{c}
u\frac{\partial u}{\partial x}+v\frac{\partial v}{\partial x}=0 \\
v\frac{\partial u}{\partial x}-u\frac{\partial v}{\partial x}=0%
\end{array}%
\right\}.  \tag{4.4}
\end{equation}%
Now, since the determinant of the matrix of system (4.4) is $%
-u^{2}-v^{2}=-k^{2}\neq 0$, the system (4.4) only has the solution $\frac{%
\partial u}{\partial x}(x_{0},y_{0})=\frac{\partial v}{\partial x}%
(x_{0},y_{0})=0$. It means that $f^{\prime }(z_{0})=0$, which is a
contradiction. Hence, either $\frac{\partial \Phi }{\partial x}%
(x_{0},y_{0})\neq 0$ or $\frac{\partial \Phi }{\partial y}(x_{0},y_{0})\neq
0$ and then, from the implicit function theorem, the result follows.
\end{proof}

The existence of a level curve passing through a non-critical point is
guaranteed by means of the following result.

\begin{proposition}
Let $f(z)$ be a non-constant entire function. For every $k>0$, there exists
a point $z_{0}$ with $f^{\prime }(z_{0})\neq 0$ belonging to the level curve
$\left\vert f(z)\right\vert =$ $k$.
\end{proposition}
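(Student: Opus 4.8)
The plan is to show that the set of critical points of $f$ (points where $f'=0$) cannot exhaust the level curve $|f(z)|=k$, so some point on the level curve must be non-critical. First I would observe that since $f$ is non-constant and entire, its derivative $f'$ is also entire and not identically zero, hence the zero set $\mathcal{Z}:=\{z:f'(z)=0\}$ is discrete (it has no accumulation point in $\mathbb{C}$, by the identity theorem). In particular $\mathcal{Z}$ is countable, and moreover it cannot contain any nontrivial arc of a curve.

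Next I would argue that the level set $L_k:=\{z:|f(z)|=k\}$ is nonempty and, near any of its points, contains infinitely many points (indeed a whole arc). Nonemptiness: if $|f|$ never equalled $k$, then by continuity and connectedness of $\mathbb{C}$ either $|f|<k$ everywhere or $|f|>k$ everywhere; the former forces $f$ to be a bounded entire function, hence constant by Liouville, a contradiction, while the latter forces $1/f$ to be a bounded entire function, again forcing $f$ constant. So pick $z_1\in L_k$. If $f'(z_1)\neq0$ we are already done, so assume $f'(z_1)=0$. Then by Lemma~14 the hypothesis $f'(z_1)\neq 0$ fails, so I instead use the local structure of $f$ at a zero of $f'$: writing $f(z)=f(z_1)+c(z-z_1)^p+\cdots$ with $p\geq 2$ and $c\neq0$, the map $f$ is locally a $p$-to-one branched cover, and the preimage under $f$ of the circle $\{|w|=k\}$ near $w=f(z_1)$ (note $|f(z_1)|=k$) is a union of $p$ arcs through $z_1$. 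Each such arc is an uncountable subset of $L_k$.

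Then the conclusion is immediate: $L_k$ contains an uncountable arc $\gamma$, while $\mathcal{Z}$ is countable (being discrete), so $\gamma\setminus\mathcal{Z}\neq\emptyset$; any point $z_0$ in this difference lies on the level curve $|f(z)|=k$ and satisfies $f'(z_0)\neq0$, which is exactly the assertion. An alternative packaging that avoids the branched-cover discussion: start at any $z_1\in L_k$; if $f'(z_1)=0$, then because $f$ is non-constant the point $z_1$ is isolated in $\mathcal{Z}$, and on a small punctured disc around $z_1$ the modulus $|f|$ takes values both above and below $k$ (since $z_1$ is neither a local max nor a local min of $|f|$ unless $f$ is constant — maximum modulus principle, applied to $f$ and to $1/f$ if $f(z_1)\neq0$, noting $f(z_1)\neq 0$ because $|f(z_1)|=k>0$); hence by the intermediate value theorem on circles centered at $z_1$, the level set $L_k$ meets every such punctured disc, and since only countably many points of $\mathbb{C}$ are in $\mathcal{Z}$, arbitrarily close to $z_1$ there is a point of $L_k$ outside $\mathcal{Z}$.

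The main obstacle is making the local picture rigorous and clean: one must be sure that near a zero $z_1$ of $f'$ that also lies on $L_k$, the level set $L_k$ genuinely accumulates at $z_1$ with uncountably many nearby points (equivalently, that $|f|$ is not locally constant and $z_1$ is not an isolated point of $L_k$). Invoking the open mapping theorem — $f$ non-constant entire implies $f$ is an open map, so $f$ of any neighborhood of $z_1$ is an open set containing $f(z_1)$, hence contains points of modulus $>k$ and points of modulus $<k$ — handles this uniformly and dispenses with the explicit power-series expansion. After that, the countability of $\mathcal{Z}$ versus the uncountability (or mere nonisolation) of $L_k$ near $z_1$ finishes the proof with no computation.
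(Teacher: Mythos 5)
Your proof is correct, but it takes a genuinely different route from the paper's. The paper invokes Picard's little theorem: every $w$ on the circle $C_k=\{|w|=k\}$ except possibly one value $w_0$ has a preimage under $f$, so $f^{-1}(C_k\setminus\{w_0\})$ is an uncountable subset of the level set, and since the critical set $D_f=\{f'=0\}$ is countable, some point of that uncountable preimage avoids $D_f$. Your argument replaces Picard with more elementary tools: Liouville's theorem (applied to $f$ and to $1/f$) to get nonemptiness of $L_k=\{|f|=k\}$, and then the open mapping theorem together with the maximum modulus principle (again for both $f$ and $1/f$, using $f(z_1)\neq 0$ because $k>0$) to show that $|f|$ takes values strictly above and strictly below $k$ on every small circle around a critical point $z_1\in L_k$, so by the intermediate value theorem $L_k$ meets every such circle; choosing the circle radius smaller than the isolation radius of $z_1$ in the discrete set $\mathcal{Z}$ produces the desired non-critical point. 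What the paper's approach buys is brevity and a one-line counting argument (uncountable minus countable is nonempty) at the cost of citing a deep theorem; what your approach buys is self-containedness at the level of a first complex analysis course, at the cost of a slightly longer local analysis. Both are valid; yours is arguably better matched to the difficulty of the statement, since Picard is heavier machinery than the result requires.
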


\begin{proof}
According to Picard's little theorem $\left[ 5\mbox{, p. 432}\right] $, for
every $w$ of the circle $C_{k}:=\left\{ w:\left\vert w\right\vert =k\right\}
$ the equation $f(z)=w$ has a solution except, at most, for a value, say $%
w_{0}$, of $C_{k}$. Since the set $f^{-1}\left( C_{k}\setminus \left\{
w_{0}\right\} \right) $ is uncountable and $D_{f}:=\left\{ z:f^{\prime
}(z)=0\right\} $ is a countable set, let $z_{0}$ be a point of
\begin{equation*}
f^{-1}\left( C_{k}\setminus \left\{ w_{0}\right\} \right) \setminus D_{f}%
.
\end{equation*}%
Then, the point $z_{0}$ satisfies the equation $\left\vert f(z)\right\vert =k
$ and it is such that $f^{\prime }(z_{0})\neq 0$. Consequently, the
proposition follows.
\end{proof}

\begin{proposition}
Let $f(z)$ be analytic on an open set $U$. Let $V\subset U$ be a proper
bounded open subset such that $f^{\prime }(z)\neq 0$ for all $z\in V$.
Assume the level curve $\left\vert f(z)\right\vert =k$, $k>0$, passes
through a point $z_{0}\in V$. Then, either the level curve is a closed curve
in $\overline{V}$ or it has two disjoint arc-connected closed subsets $A$, $%
B $ of the boundary of $V$.
\end{proposition}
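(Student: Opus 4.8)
The plan is to analyze the level curve $\Gamma := \{z \in \overline{V} : |f(z)| = k\}$ as a one-dimensional object and then use a topological/continuation argument. First I would invoke Lemma 14: since $f'(z) \neq 0$ on $V$ and $z_0 \in V$ lies on $\Gamma$, in a neighborhood of each point of $\Gamma \cap V$ the curve is locally the graph of a $\mathcal{C}^\infty$ function ($\varphi(y)$ or $\psi(x)$), hence a smooth $1$-manifold. Thus $\Gamma \cap V$ is a disjoint union of smooth arcs. I would then follow the arc through $z_0$ in both directions: parametrize the connected component $\gamma$ of $\Gamma \cap V$ containing $z_0$ by arc length, using the implicit function theorem to extend the parametrization as long as the point stays in $V$ and $f' \neq 0$ there. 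The key dichotomy is whether this maximal arc closes up inside $V$ or not.

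Next, the continuation argument: starting from $z_0$ and moving along $\gamma$, one of two things happens. Either the arc returns to $z_0$ (or to a previously visited point), producing a closed curve contained in $\overline{V}$ — this is the first alternative. Or the arc cannot be continued indefinitely within $V$; since $V$ is bounded, the arc has finite length or at least its closure meets $\partial V$. Because $f'$ is nonzero throughout $V$, the continuation can only fail by the point approaching the boundary $\partial V$. So each of the two directions of travel from $z_0$ leads to a point (or limit set) on $\partial V$. I would let $A$ be the set of limit points on $\partial V$ reached by following $\gamma$ in one direction, and $B$ the corresponding set in the other direction. Each is a nonempty closed subset of $\partial V$ (closed as the limit set of a curve in the compact set $\overline{V}$), and each is arc-connected because it is the limit of a connected arc — more precisely, one shows the closure of the half-arc together with its limit set is a connected compact set meeting $\partial V$, and by a standard argument its intersection with $\partial V$ is a continuum. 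The disjointness of $A$ and $B$ follows because if they shared a point $w \in \partial V$, then $\gamma \cup \{w\}$ would be a closed curve, putting us back in the first alternative (or one argues that $z_0$ is an interior point of the arc so the two sub-arcs are separated).

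The main obstacle I expect is making rigorous the claim that the limit set on $\partial V$ is \emph{arc-connected} rather than merely connected: a curve of infinite length accumulating on the boundary can in principle have a complicated (merely connected, not arc-connected) limit set. To handle this cleanly I would want to use that $\Gamma$ is a \emph{real-analytic} variety (since $\Phi = u^2 + v^2 - k^2$ is real-analytic and $\nabla\Phi \neq 0$ on $V$), so $\Gamma \cap V$ is a real-analytic $1$-manifold; combined with the boundedness of $V$ and properness, each component is either a closed loop in $\overline V$ or an embedded arc whose two ends have well-defined closed connected (indeed locally-connected, hence arc-connected) limit sets in $\partial V$. If one only needs "connected closed subsets" rather than "arc-connected," the argument is shorter and the analyticity is not strictly needed; the phrasing "arc-connected closed subsets" suggests the intended reading is that $A$ and $B$ are themselves arcs (possibly degenerate to points) of $\partial V$, which is exactly what the real-analytic structure delivers. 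I would therefore structure the write-up around: (1) smoothness of $\Gamma \cap V$ via Lemma 14; (2) maximal continuation of the arc through $z_0$; (3) the closed-loop vs. boundary-hitting dichotomy from boundedness of $V$; (4) identification and disjointness of the two boundary limit sets $A$ and $B$.
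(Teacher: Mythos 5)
Your approach is essentially the same as the paper's: invoke the implicit function theorem lemma on level curves (Lemma 13) to establish local smooth $1$-manifold structure, continue the arc through $z_0$ maximally, and conclude it either closes up in $\overline V$ or, since endpoints in $V$ are impossible at non-critical points, terminates at $\partial V$ on two disjoint pieces $A$ and $B$. In fact your write-up is more careful than the paper's own proof, which simply asserts that ``the continuation from the points $z_1$, $z_2$ creates two sets $A$, $B$'' and never addresses the arc-connectedness of the boundary limit sets, the delicate point you rightly flag and propose to settle via real-analyticity of $\Phi=u^2+v^2-k^2$.
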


\begin{proof}
As proven in Lemma 13, there exists a unique curve contained in the level
curve $\left\vert f(z)\right\vert =k$ that contains $z_{0}$ as an interior
point. Hence, let $L_{0}$ be the arc-connected component of the level curve $%
\left\vert f(z)\right\vert =k$ contained in $\overline{V}$ that passes
through $z_{0}$ . If $L_{0}$ is neither a closed curve nor crosses the
boundary of $V$, then, as $L_{0}$ is a closed set in the topology of the
complex plane, necessarily $L_{0}$ would have two end points, say $z_{1}$
and $z_{2}$, contained in $V$. Because $z_{1}$, $z_{2}$ are non-critical
points, by applying Lemma 13, there exists a continuation of $L_{0}$, which
is a contradiction because $L_{0}$ is an arc-connected component and
therefore is a maximal set. In this case, it is clear that the continuation
from the points $z_{1}$, $z_{2}$ creates two sets $A$, $B$ (each set $A$ and
$B$ could be reduced to one point) verifying that $A\cap B=\emptyset $. Thus
the result follows.
\end{proof}

The next result is a basic property of the complex numbers.

\begin{lemma}
Let $z_{1}$, $z_{2}$, $z_{3}$ be complex numbers:\newline
a) If $z_{2}=\lambda z_{1}$ with $\lambda >0$ and $\left\vert
z_{2}\right\vert >$ $\left\vert z_{3}\right\vert $, then $\left\vert
z_{1}\right\vert <\left\vert z_{1}+z_{2}+z_{3}\right\vert $.\newline
b) If $z_{2}=-\lambda z_{1}$ with $\lambda >0$ \ and $\left\vert
z_{1}\right\vert >\left\vert z_{2}\right\vert >$ $\left\vert
z_{3}\right\vert $, then $\left\vert z_{1}\right\vert >\left\vert
z_{1}+z_{2}+z_{3}\right\vert $.
\end{lemma}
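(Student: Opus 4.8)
The plan is to handle both parts by the same device: group the two collinear terms, writing $z_1+z_2=(1\pm\lambda)z_1$, and then apply the triangle inequality (in its ordinary form for the upper bound in (b), and in its reverse form $|a+b|\ge |a|-|b|$ for the lower bound in (a)). The point is that because $z_2$ is a positive or negative \emph{real} multiple of $z_1$, the modulus $|z_1+z_2|$ is exactly $(1+\lambda)|z_1|$ in case (a) and exactly $(1-\lambda)|z_1|$ in case (b), with the correct signs, which converts everything into a one-line comparison of real numbers.

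First I would record that the hypotheses rule out the degenerate situation. In (a), $|z_2|>|z_3|\ge 0$ forces $z_2\neq 0$, hence $z_1\neq 0$ since $z_2=\lambda z_1$. In (b), $|z_1|>|z_2|\ge 0$ forces $z_1\neq 0$, and then $|z_2|=\lambda|z_1|<|z_1|$ pins down $0<\lambda<1$; this is what makes $1-\lambda$ positive and justifies $|(1-\lambda)z_1|=(1-\lambda)|z_1|$.

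For (a): from $z_2=\lambda z_1$ we get $z_1+z_2=(1+\lambda)z_1$, so
\[
|z_1+z_2+z_3|\;\ge\;|z_1+z_2|-|z_3|\;=\;(1+\lambda)|z_1|-|z_3|\;=\;|z_1|+\lambda|z_1|-|z_3|,
\]
and since $\lambda|z_1|=|z_2|>|z_3|$ the right-hand side is strictly larger than $|z_1|$. For (b): from $z_2=-\lambda z_1$ with $0<\lambda<1$ we get $z_1+z_2=(1-\lambda)z_1$ with $|z_1+z_2|=(1-\lambda)|z_1|=|z_1|-|z_2|$, so
\[
|z_1+z_2+z_3|\;\le\;|z_1+z_2|+|z_3|\;=\;|z_1|-|z_2|+|z_3|\;<\;|z_1|,
\]
the final strict inequality being just $|z_3|<|z_2|$.

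There is no real obstacle in this lemma; the only thing one must be careful about is the bookkeeping of signs and the degenerate cases noted above — in particular, in (b) one genuinely needs $\lambda<1$ (equivalently $|z_2|<|z_1|$) so that $1-\lambda>0$ and the equality $|z_1+z_2|=|z_1|-|z_2|$ holds rather than $|z_2|-|z_1|$. Once that is in place, both inequalities are immediate.
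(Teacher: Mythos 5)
Your proof is correct and complete. The paper itself gives no proof of this lemma --- it simply labels it ``a basic property of the complex numbers'' and moves on --- so there is nothing to compare against; you are filling in an omitted argument. The device you chose (collapse the collinear pair $z_1+z_2=(1\pm\lambda)z_1$, whose modulus is then exactly $(1+\lambda)\left\vert z_1\right\vert$ in case (a) and $(1-\lambda)\left\vert z_1\right\vert$ in case (b), and finish with the appropriate form of the triangle inequality) is the natural one, and you were right to flag the two small points the authors left implicit: that $z_1\neq 0$ is forced by $\left\vert z_2\right\vert>\left\vert z_3\right\vert\geq 0$ in (a) and by $\left\vert z_1\right\vert>\left\vert z_2\right\vert\geq 0$ in (b), and that in (b) the hypothesis $\left\vert z_1\right\vert>\left\vert z_2\right\vert$ pins down $0<\lambda<1$, which is exactly what makes $\left\vert(1-\lambda)z_1\right\vert=\left\vert z_1\right\vert-\left\vert z_2\right\vert$ rather than $\left\vert z_2\right\vert-\left\vert z_1\right\vert$. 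No gaps.
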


As we have already seen, a level curve that passes through a non-critical
point of an analytic function is locally a simple curve. Nevertheless, if $%
z_{0}$ is a critical point, then the level curve that passes through $z_{0}$
has, at least, four branches rising from $z_{0}$.

\begin{proposition}
Let $f$ be a non-constant entire function and $z_{0}$ a critical point of $f$
belonging to the level curve $L_{k}:=\left\{ z:\left\vert f(z)\right\vert
=k\right\} $, $k>0$. Let $m$ be the order of the zero of $f^{\prime }(z)$ at
the point $z_{0}$. Then $L_{k}$ has at least $2(m+1)$ branches which meet at
the point $z_{0}$.
\end{proposition}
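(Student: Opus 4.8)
The plan is to reduce the statement to a local study of $f$ near $z_0$ via its Taylor expansion. Since $z_0$ is a critical point of $f$ at which $f'$ has a zero of order $m$, we can write $f(z) = f(z_0) + c(z-z_0)^{m+1} + O\!\left((z-z_0)^{m+2}\right)$ with $c \neq 0$, because the first nonvanishing derivative of $f$ after $f(z_0)$ is the $(m+1)$-st. Setting $w = f(z)$, the map $z \mapsto w$ is, up to a nonzero linear change of coordinates in the $w$-plane and a local biholomorphism in the $z$-plane, modeled by the power map $\zeta \mapsto \zeta^{m+1}$; more precisely, there is a local holomorphic chart $\zeta = \zeta(z)$ with $\zeta(z_0)=0$ and $\zeta'(z_0)\neq 0$ such that $f(z) = f(z_0) + \zeta^{m+1}$ near $z_0$. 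This is the standard local normal form for a holomorphic map at a critical point of multiplicity $m+1$, and it is the technical backbone of the argument.

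Next I would describe the level set $|f(z)| = k$ in the $\zeta$-coordinate. Writing $f(z_0) = r e^{i\beta}$ (note $r>0$ is possible since $z_0 \in L_k$ forces $|f(z_0)|$ need not equal $k$, but $r$ could also be $0$ — I would treat the case $f(z_0)=0$ separately, where $|\zeta|^{m+1}=k$ is literally a circle of radius $k^{1/(m+1)}$, giving a smooth closed curve, not branches, so in fact the interesting case is $f(z_0)\neq 0$). Assuming $f(z_0) = r e^{i\beta}$ with $r>0$, the level condition becomes $\left| r e^{i\beta} + \zeta^{m+1} \right| = k$, i.e. $r^2 + 2r\,\mathrm{Re}\!\left(e^{-i\beta}\zeta^{m+1}\right) + |\zeta|^{2(m+1)} = k^2$. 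Writing $\zeta = \rho e^{i\phi}$ this is $\rho^{2(m+1)} + 2r\rho^{m+1}\cos\!\left((m+1)\phi - \beta\right) + (r^2 - k^2) = 0$. For each admissible direction $\phi$ this is a polynomial equation in $\rho^{m+1}$; solving the quadratic in $X := \rho^{m+1}$ gives $X = -r\cos\!\left((m+1)\phi-\beta\right) \pm \sqrt{r^2\cos^2\!\left((m+1)\phi-\beta\right) - (r^2-k^2)}$. Since $z_0 \in L_k$, the origin $\zeta=0$ satisfies this only if $r = k$; so in fact $r=k$ here (because $|f(z_0)|$ need not be $k$ a priori, but if $z_0$ lies on the level curve $L_k$ then $|f(z_0)| = k$, hence $r=k$). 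With $r=k$ the equation becomes $X^2 + 2k X\cos\!\left((m+1)\phi-\beta\right) = 0$, so $X = 0$ or $X = -2k\cos\!\left((m+1)\phi-\beta\right)$. The nontrivial branch exists (with $X = \rho^{m+1}>0$) precisely when $\cos\!\left((m+1)\phi - \beta\right) < 0$.

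Then I would count the branches. The angular condition $\cos\!\left((m+1)\phi-\beta\right) < 0$ defines $(m+1)$ open arcs of $\phi \in [0,2\pi)$ (one per period of the $(m+1)$-fold cosine), and on the boundary of each such arc, where $\cos\!\left((m+1)\phi-\beta\right)=0$, the nontrivial solution meets $\rho=0$. Thus the nontrivial part of the level set in the $\zeta$-plane consists of $(m+1)$ arcs, each emanating from $\zeta=0$ at one endpoint and returning to $\zeta=0$ at the other — equivalently, $2(m+1)$ branches (half-arcs) issuing from the origin, smoothly parametrized by $\rho = \left(-2k\cos\!\left((m+1)\phi-\beta\right)\right)^{1/(m+1)}$. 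Transporting back through the local biholomorphism $\zeta = \zeta(z)$, which preserves the number of branches meeting at $z_0$, we conclude $L_k$ has at least $2(m+1)$ branches meeting at $z_0$. (I say "at least" because the global level curve may have further structure away from the model neighborhood, and because in degenerate situations — e.g. $f(z_0)=0$ — the local picture changes, but one still gets at least $2(m+1)$: when $f(z_0)=0$ the curve $|\zeta|^{m+1}=k$ through $z_0$ would actually require $k=0$, excluded, so that subcase is vacuous and the generic count stands.) The main obstacle I anticipate is bookkeeping the two edge cases cleanly — establishing that $|f(z_0)| = k$ is forced, and confirming that the $O\!\left((z-z_0)^{m+2}\right)$ correction genuinely disappears under the normal-form change of variables so that the branch count is exact and not merely asymptotic — but both are standard consequences of the holomorphic local normal form, so I would invoke that and keep the verification brief.
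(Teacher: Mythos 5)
Your proof is correct, but it takes a genuinely different route from the paper. You invoke the holomorphic normal form $f(z)=f(z_0)+\zeta^{m+1}$ after a local biholomorphism $\zeta=\zeta(z)$ with $\zeta(z_0)=0$, then compute the level set explicitly in polar coordinates $\zeta=\rho e^{i\phi}$: writing $f(z_0)=k e^{i\beta}$, the condition $|f|=k$ reduces to $\rho^{m+1}=-2k\cos\big((m+1)\phi-\beta\big)$, which has positive solutions on exactly $m+1$ angular arcs, each giving a petal whose two ends return to the origin, hence $2(m+1)$ branches at $z_0$, and in fact exactly that many. The paper instead stays with the raw Taylor decomposition $f(z)=z_1+z_2+z_3$ where $z_1=f(z_0)$, $z_2$ is the degree-$(m+1)$ term, and $z_3$ is the tail; it uses its elementary Lemma 16 on moduli of three-term sums to cut a small disk about $z_0$ into $2(m+1)$ sectors along the radii where $z_2=\pm\lambda z_1$, so that $|f|>|f(z_0)|$ on one bounding radius of each sector and $|f|<|f(z_0)|$ on the other, and then applies its Proposition 15 (a level curve in a critical-point-free region either closes up or reaches the boundary in two disjoint arcs) inside each sector to produce a branch there. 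Your route is sharper and fully explicit, at the cost of importing the normal-form theorem; the paper's route only yields a lower bound but is self-contained within the lemmas already established and avoids any change of variables. The observation you make explicitly, that $z_0\in L_k$ with $k>0$ forces $f(z_0)\neq 0$, is implicit in the paper via its requirement $|z_1|>|z_2|>|z_3|$ in (4.7).
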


\begin{proof}
Since $z_{0}$ is a zero of order $m$ of $f^{\prime }(z)$, the Taylor
expansion of $f(z)$ is given by
\begin{equation}
f(z)=f(z_{0})+\sum_{n=m+1}^{\infty }\frac{f^{(n)}(z_{0})}{n!}(z-z_{0})^{n}%
\mbox{, with }f^{(m+1)}(z_{0})\neq 0.  \tag{4.5}
\end{equation}%
Then, there exits $r>0$ so that for every $z$ satisfying $0<\left\vert
z-z_{0}\right\vert \leq r$ we can write
\begin{equation}
f(z)=z_{1}+z_{2}+z_{3},  \tag{4.6}
\end{equation}%
satisfying
\begin{equation}
f^{\prime }(z)\neq 0\mbox{ and }\left\vert z_{1}\right\vert
>\left\vert z_{2}\right\vert >\left\vert z_{3}\right\vert,  \tag{4.7}
\end{equation}%
where $z_{1}$, $z_{2}$, $z_{3}$, (depending on $z$) are given by
\begin{equation*}
z_{1}=f(z_{0}),
\end{equation*}%
\begin{equation*}
z_{2}=\frac{f^{(m+1)}(z_{0})}{(m+1)!}(z-z_{0})^{m+1}
\end{equation*}%
and
\begin{equation*}
z_{3}=\sum_{n=m+2}^{\infty }\frac{f^{(n)}(z_{0})}{n!}(z-z_{0})^{n}.
\end{equation*}%
On the other hand, since for each $\lambda >0$ the equation $z_{2}=\lambda
z_{1}$ has exactly $m+1$ solutions in a certain disk $D(z_{0},r_{\lambda
}):=\left\{ z:\left\vert z-z_{0}\right\vert \leq r_{\lambda }\right\} $,
there exists some $L_{r}>0$ such that the set of solutions of the equation $%
z_{2}=\lambda z_{1}$, when $\lambda \in \left[ 0,L_{r}\right] $, coincides
with the set of all the points of certain $m+1$ radii of the disk $%
D(z_{0},r):=\left\{ z:\left\vert z-z_{0}\right\vert \leq r\right\} $.
Analogously, the set of solutions of the equation $z_{2}=-\lambda z_{1}$
coincides with the set determined by other $m+1$ radii of the disk\ $%
D(z_{0},r)$. Therefore, taking into account (4.6), (4.7) and the preceding
lemma, $D(z_{0},r)$ is partitioned into $2(m+1)$ sectors in such a way that
given one of these sectors, we have that
\begin{equation*}
\left\vert f(z_{0})\right\vert <\left\vert f(z)\right\vert
\end{equation*}%
for all $z$ belonging to one radius, and
\begin{equation*}
\left\vert f(z_{0})\right\vert >\left\vert f(z)\right\vert
\end{equation*}%
for any $z$ of the adjacent radius. Now, by applying Proposition 15 on the
interior of each sector, there exists a branch of the level curve that
passes through $z_{0}$ contained in each sector. The proof is completed and
then the result follows.
\end{proof}

Now we are going to introduce an auxiliary function associated with each $%
G_{n}(s)$ which will be represented by $G_{n}^{\ast }(s)$ and whose level
curves will allow us to give a new characterization of the sets $R_{n}:=%
\overline{\left\{ \mbox{Re}s:G_{n}(s)=0\right\} }$.

\begin{definition}
For every integer $n>2$ we define
\begin{equation*}
G_{n}^{\ast }(s):=G_{n}(s)-p_{k_{n}}^{s},
\end{equation*}%
where $p_{k_{n}}$ is the last prime number such that $p_{k_{n}}\leq n$.
\end{definition}

The characterization of $R_{n}$ in terms of level curves is given in the
following result.

\begin{theorem}
A real number $\sigma \in $ $R_{n}:=\overline{\left\{ \mbox{Re}%
s:G_{n}(s)=0\right\} }$, $n>2$, if and only if the level curve
$\left\vert G_{n}^{\ast }(z)\right\vert =p_{k_{n}}^{\sigma }$
intersects the vertical line $x=$ $\sigma $.
\end{theorem}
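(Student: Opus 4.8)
The plan is to translate the level-curve statement into the framework of Theorem 1 by means of the splitting $G_{n}(s)=G_{n}^{\ast}(s)+p_{k_{n}}^{s}$. I would start from a small arithmetic remark: since $p_{k_{n}}$ is the largest prime not exceeding $n$, Bertrand's postulate gives $p_{k_{n}}>n/2$, so no integer $m$ with $1\le m\le n$ and $m\ne p_{k_{n}}$ is a multiple of $p_{k_{n}}$; equivalently, the $k_{n}$-th coordinate of $\mathbf{c}_{m}$ vanishes for every such $m$, while $\mathbf{c}_{p_{k_{n}}}$ is the $k_{n}$-th coordinate vector. Consequently, putting $F_{n}^{\ast}(\sigma,\mathbf{x}):=\sum_{1\le m\le n,\,m\ne p_{k_{n}}}m^{\sigma}e^{\langle \mathbf{c}_{m},\mathbf{x}\rangle i}$, this sum depends only on $x_{1},\dots,x_{k_{n}-1}$, one has $F_{n}(\sigma,\mathbf{x})=F_{n}^{\ast}(\sigma,\mathbf{x})+p_{k_{n}}^{\sigma}e^{x_{k_{n}}i}$, and the substitution $x_{l}=y\log p_{l}$ for $l<k_{n}$ turns $F_{n}^{\ast}(\sigma,\mathbf{x})$ into $G_{n}^{\ast}(\sigma+iy)$.

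Granting this, the implication ``the level curve $|G_{n}^{\ast}(z)|=p_{k_{n}}^{\sigma}$ meets $x=\sigma$'' $\Rightarrow$ ``$\sigma\in R_{n}$'' is immediate: if $|G_{n}^{\ast}(\sigma+iy_{0})|=p_{k_{n}}^{\sigma}$, write $G_{n}^{\ast}(\sigma+iy_{0})=p_{k_{n}}^{\sigma}e^{i\beta}$; then for the vector with $x_{l}=y_{0}\log p_{l}$ ($l<k_{n}$) and $x_{k_{n}}=\beta+\pi$ the identities above give $F_{n}(\sigma,\mathbf{x})=p_{k_{n}}^{\sigma}e^{i\beta}+p_{k_{n}}^{\sigma}e^{i(\beta+\pi)}=0$, whence $\sigma\in R_{n}$ by Theorem 1.

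For the converse, assume $\sigma\in R_{n}$. Theorem 1 provides $\mathbf{x}$ with $F_{n}(\sigma,\mathbf{x})=0$, so $|{-}p_{k_{n}}^{\sigma}e^{x_{k_{n}}i}|=|F_{n}^{\ast}(\sigma,\mathbf{x})|=p_{k_{n}}^{\sigma}$. Also $\sigma\in R_{n}\subset[a_{n},b_{n}]\subset[x_{n,0},x_{n,1}]$ by Lemma 4, so Lemma 7 applied with $j=p_{k_{n}}$ yields $p_{k_{n}}^{\sigma}\le\sum_{1\le m\le n,\,m\ne p_{k_{n}}}m^{\sigma}=G_{n}^{\ast}(\sigma)$. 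Thus on the line $x=\sigma$ the continuous function $y\mapsto|G_{n}^{\ast}(\sigma+iy)|$ equals $G_{n}^{\ast}(\sigma)\ge p_{k_{n}}^{\sigma}$ at $y=0$; and, repeating the Kronecker-theorem construction from the sufficiency half of Theorem 1 applied only to the rationally independent frequencies $\log p_{1},\dots,\log p_{k_{n}-1}$ and to the vector $\mathbf{x}$, one gets $T_{j}$ with $G_{n}^{\ast}(\sigma+iT_{j})\to F_{n}^{\ast}(\sigma,\mathbf{x})$, hence $|G_{n}^{\ast}(\sigma+iT_{j})|\to p_{k_{n}}^{\sigma}$. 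If some $|G_{n}^{\ast}(\sigma+iT_{j})|\le p_{k_{n}}^{\sigma}$, the intermediate value theorem between $y=0$ and that $T_{j}$ produces $y_{0}$ with $|G_{n}^{\ast}(\sigma+iy_{0})|=p_{k_{n}}^{\sigma}$, i.e. a point lying on both the level curve and the line.

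The step I expect to be the genuine obstacle is making this last application of the intermediate value theorem unconditional: the Kronecker construction gives $p_{k_{n}}^{\sigma}$ only as a \emph{limit} of values of $|G_{n}^{\ast}(\sigma+iy)|$, so the argument above closes only when some $|G_{n}^{\ast}(\sigma+iT_{j})|\le p_{k_{n}}^{\sigma}$; the delicate case is $|G_{n}^{\ast}(\sigma+iy)|>p_{k_{n}}^{\sigma}$ for all $y$ together with $\inf_{y}|G_{n}^{\ast}(\sigma+iy)|=p_{k_{n}}^{\sigma}$, which must be excluded when $\sigma\in R_{n}$. I would handle it using extra structure: either the linear independence of $\log p_{k_{n}}$ from $\log p_{1},\dots,\log p_{k_{n}-1}$ (which lets one convert a zero of $G_{n}$ with real part close to $\sigma$, guaranteed by $\sigma\in R_{n}$, into an exact value $p_{k_{n}}^{\sigma}$ of $|G_{n}^{\ast}|$ on the line $x=\sigma$), or the connectivity and continuation properties of the level curves of $G_{n}^{\ast}$ established earlier in this section, combined with the remark that when $p_{k_{n}}^{\sigma}>1$ the leftmost point of $|G_{n}^{\ast}(z)|=p_{k_{n}}^{\sigma}$ lies on the real axis at the $\rho\le\sigma$ with $G_{n}^{\ast}(\rho)=p_{k_{n}}^{\sigma}$. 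Beyond this point, everything is bookkeeping around Theorem 1 and Lemma 7.
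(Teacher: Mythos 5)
Your sufficiency half is correct and is essentially the paper's argument: both use the decomposition $F_{n}(\sigma,\mathbf{x})=F_{n}^{\ast}(\sigma,\mathbf{x})+p_{k_{n}}^{\sigma}e^{x_{k_{n}}i}$ (justified, as you say, by Bertrand's postulate, which guarantees that no $m\le n$ with $m\neq p_{k_{n}}$ is divisible by $p_{k_{n}}$), read the last coordinate off the phase of $G_{n}^{\ast}(\sigma+it)=p_{k_{n}}^{\sigma}e^{i\theta}$ by putting $x_{k_{n}}=\theta+\pi$, and invoke Theorem~1.

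For the necessity half you diverge from the paper, and the gap you flag at the end is genuine and is not closed by anything you write. Your route is Theorem~1 $\Rightarrow$ existence of $\mathbf{x}$ with $F_{n}(\sigma,\mathbf{x})=0$ $\Rightarrow$ $|F_{n}^{\ast}(\sigma,\mathbf{x})|=p_{k_{n}}^{\sigma}$, then Kronecker to produce $T_{j}$ with $|G_{n}^{\ast}(\sigma+iT_{j})|\to p_{k_{n}}^{\sigma}$, then the intermediate value theorem between $y=0$ (where $|G_{n}^{\ast}(\sigma)|\geq p_{k_{n}}^{\sigma}$ by Lemma~7) and $y=T_{j}$. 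The IVT only fires if some $|G_{n}^{\ast}(\sigma+iT_{j})|\leq p_{k_{n}}^{\sigma}$; if instead $|G_{n}^{\ast}(\sigma+iy)|>p_{k_{n}}^{\sigma}$ for every $y$ with infimum exactly $p_{k_{n}}^{\sigma}$, the argument collapses. This scenario is a standard feature of almost periodic functions --- for instance $y\mapsto |3+e^{iy}+e^{iy\sqrt{2}}|$ is everywhere $>1$ with infimum $1$ --- and nothing in the data ``$\sigma\in R_{n}$'' obviously rules it out, since that only tells you the minimum of $|F_{n}^{\ast}(\sigma,\cdot)|$ over the full $(k_{n}-1)$-torus is $\leq p_{k_{n}}^{\sigma}$, while the orbit $\{(y\log p_{1},\dots,y\log p_{k_{n}-1}) \bmod 2\pi\}$ is a null, non-closed line that may miss the minimizing set. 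Your two suggested patches are sketches, not arguments: the first ``extra rational independence from $\log p_{k_{n}}$'' idea, pushed through, reproduces zeros of $G_{n}$ near $x=\sigma$ but not a point of the level curve \emph{on} $x=\sigma$; the second ``leftmost point of the level curve'' idea shows the level set meets the real axis at some $\rho\leq\sigma$, but leaves open whether the arc-connected component through $\rho$ ever reaches $x=\sigma$, which is the same obstruction repackaged.

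For what it is worth, the paper's own necessity proof sidesteps Kronecker entirely: from a sequence of zeros $s_{m}=\sigma_{m}+it_{m}$ with $\sigma_{m}\to\sigma$ it extracts a subsequence with $e^{it_{m_{j}}}\to e^{i\lambda}$ and then writes the limit of (4.9) with every $e^{it_{m_{j}}\log m'}$ replaced by $e^{i\lambda\log m'}$, concluding $|G_{n}^{\ast}(\sigma+i\lambda)|=p_{k_{n}}^{\sigma}$. But convergence of $e^{it_{m_{j}}}$ constrains $t_{m_{j}}$ only modulo $2\pi$, which says nothing about $t_{m_{j}}\log m'$ modulo $2\pi$ for the other moduli, so that step is not justified unless $(t_{m_{j}})$ is bounded. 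In other words, the difficulty you have honestly isolated is precisely the point that the paper's proof glosses over; you have not solved it, but you have at least identified it.
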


\begin{proof}
First of all we will prove the sufficiency. Let $s=$ $\sigma +it$ be a point
of the line $x=$ $\sigma $ belonging to the level curve $\left\vert
G_{n}^{\ast }(z)\right\vert =p_{k_{n}}^{\sigma }$, then
\begin{equation*}
\left\vert G_{n}^{\ast }(\sigma +it)\right\vert =p_{k_{n}}^{\sigma }
\end{equation*}%
and thus there exists some $\theta \in \left[ 0,2\pi \right) $ such that
\begin{equation*}
G_{n}^{\ast }(\sigma +it)=p_{k_{n}}^{\sigma }e^{i\theta }.
\end{equation*}%
That means that
\begin{equation}
1+2^{\sigma +it}+3^{\sigma +it}+...+p_{k_{n}-1}^{\sigma
+it}-p_{k_{n}}^{\sigma }e^{i\theta }+...+n^{\sigma +it}=0.  \tag{4.8}
\end{equation}%
Now, noticing (4.8), for
\begin{equation*}
\mathbf{x=}t(\log 2,\log 3,...,\log p_{k_{n}-1},\theta +\pi ),
\end{equation*}%
the function $F_{n}(\sigma ,\mathbf{x})$, defined by (2.1), satisfies
\begin{equation*}
F_{n}(\sigma ,\mathbf{x})=1+2^{\sigma }e^{it\log 2}+3^{\sigma }e^{it\log
3}+...+
\end{equation*}%
\begin{equation*}
+p_{k_{n}-1}^{\sigma }e^{it\log p_{k_{n}-1}}+p_{k_{n}}^{\sigma }e^{it(\theta
+\pi )}+...+n^{\sigma }e^{it\log n}=
\end{equation*}%
\begin{equation*}
=1+2^{\sigma +it}+3^{\sigma +it}+...+p_{k_{n}-1}^{\sigma
+it}-p_{k_{n}}^{\sigma }e^{i\theta }+...+n^{\sigma +it}=0.
\end{equation*}%
Then, from Theorem 1, it follows that $\sigma \in $ $R_{n}$.

Reciprocally, suppose that $\sigma \in $ $R_{n}$. Then there exists a
sequence of zeros $(s_{m}=\sigma _{m}+it_{m})_{m=1,2,...}$ of $G_{n}(s)$
such that
\begin{equation*}
\sigma =\lim_{m\rightarrow \infty }\sigma _{m}.
\end{equation*}%
Because of the definition of $G_{n}^{\ast }(s)$, at each zero $s_{m}$ of $%
G_{n}(s)$, one has
\begin{equation*}
G_{n}^{\ast }(s_{m})=-p_{k_{n}}^{s_{m}}.
\end{equation*}%
Then, by taking the modulus, we obtain
\begin{equation}
\left\vert 1+2^{\sigma _{m}+it_{m}}+...+p_{k_{n}-1}^{\sigma
_{m}+it_{m}}+...+n^{\sigma _{m}+it_{m}}\right\vert =p_{k_{n}}^{\sigma _{m}}%
\mbox{ for all }m=1,2,...  \tag{4.9}
\end{equation}%
Now, since the sequence $\left( e^{it_{m}}\right) _{m=1,2,...}$is contained
in the unit circle, there exists a subsequence $%
(e^{^{it_{m_{j}}}})_{j=1,2,...}$ such that
\begin{equation*}
\lim_{j\rightarrow \infty }e^{^{it_{m_{j}}}}=e^{i\lambda }\mbox{ for some }%
\lambda \in \left[ 0,2\pi \right).
\end{equation*}%
At thus point, we can rewrite the expression (4.9), for $m_{j}$, $j=1,2,...$%
, and taking the limit $j\rightarrow \infty $, into the expression
\begin{equation*}
\left\vert 1+2^{\sigma }e^{i\lambda \log 2}+3^{\sigma }e^{i\lambda \log
3}+...+p_{k_{n}-1}^{\sigma }e^{i\lambda \log p_{k_{n}-1}}+...+n^{\sigma
}e^{i\lambda \log n}\right\vert =p_{k_{n}}^{\sigma },
\end{equation*}%
which is equivalent to say
\begin{equation*}
\left\vert G_{n}^{\ast }(\sigma +i\lambda )\right\vert =p_{k_{n}}^{\sigma }%
.
\end{equation*}%
That is, the level curve $\left\vert G_{n}^{\ast }(z)\right\vert =$ $%
p_{k_{n}}^{\sigma }$ meets the vertical line $x=$ $\sigma $. Now the proof
is completed.
\end{proof}

As a consequence of the theorem above, we will introduce a real function
with the aim of characterizing the sets $R_{n}$.

\begin{definition}
For $n>2$, we define the real function $A_{n}(x,y)$ as
\begin{equation*}
A_{n}(x,y):=\left\vert G_{n}^{\ast }(x+iy)\right\vert -p_{k_{n}}^{x};%
x,\ y\in
\mathbb{R}
.
\end{equation*}
\end{definition}

The characterization of $R_{n}$ by means of $A_{n}(x,y)$ is given in the
following result.

\begin{corollary}
A real number $x$ belongs to $R_{n}$ if and only if $A_{n}(x,y)=0$ for some $%
y\in
\mathbb{R}
$. Furthermore
\begin{equation}
A_{n}(x,0)\geq 0\mbox{ for all }x\in \left[ a_{n},b_{n}\right].
\tag{4.10}
\end{equation}
\end{corollary}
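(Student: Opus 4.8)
The plan is to read off both assertions from the structure already in place, with no essentially new argument needed. For the equivalence I would simply unwind Definition 20: since $A_{n}(x,y)=|G_{n}^{\ast }(x+iy)|-p_{k_{n}}^{x}$, the equation $A_{n}(x,y)=0$ says exactly that $|G_{n}^{\ast }(x+iy)|=p_{k_{n}}^{x}$, i.e. that the point $x+iy$ lies on the level curve $\{z:|G_{n}^{\ast }(z)|=p_{k_{n}}^{x}\}$. As $x+iy$ lies on the vertical line $\mbox{Re}\,z=x$, the existence of some $y\in \mathbb{R}$ with $A_{n}(x,y)=0$ is equivalent to the statement that this level curve meets the line $\mbox{Re}\,z=x$; and by Theorem 19 that is in turn equivalent to $x\in R_{n}$. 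This settles the first claim.

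For inequality (4.10) I would argue as follows. For real $x$ we have $G_{n}^{\ast }(x)=G_{n}(x)-p_{k_{n}}^{x}=\sum_{m=1,\,m\neq p_{k_{n}}}^{n}m^{x}$, which is a finite sum of strictly positive reals; hence $|G_{n}^{\ast }(x)|=\sum_{m=1,\,m\neq p_{k_{n}}}^{n}m^{x}$. Now I would apply Lemma 7 with the index $j=p_{k_{n}}$ (which is admissible since $1\leq p_{k_{n}}\leq n$) and with $s=x$ real: it gives $p_{k_{n}}^{x}=|p_{k_{n}}^{x}|\leq \sum_{k=1,\,k\neq p_{k_{n}}}^{n}|k^{x}|=\sum_{k=1,\,k\neq p_{k_{n}}}^{n}k^{x}=G_{n}^{\ast }(x)$, valid for every $x$ in the strip $x_{n,0}\leq \mbox{Re}\,s\leq x_{n,1}$. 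Combining this with Lemma 4, which guarantees $[a_{n},b_{n}]\subset [x_{n,0},x_{n,1}]$, I conclude that for all $x\in [a_{n},b_{n}]$ one has $A_{n}(x,0)=|G_{n}^{\ast }(x)|-p_{k_{n}}^{x}=G_{n}^{\ast }(x)-p_{k_{n}}^{x}\geq 0$, which is (4.10).

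There is no serious obstacle here; the corollary is a repackaging of Theorem 19, Lemma 7 and Lemma 4. The only point deserving a line of care is the role of whether $n$ is prime: if $n$ is prime then $p_{k_{n}}=n$ and the relevant instance of (3.1) is the endpoint case $j=n$ of Lemma 7, which comes from the defining inequality of $x_{n,1}$; if $n$ is composite then $p_{k_{n}}<n$ and one is using the interior case $1\neq j\neq n$ of Lemma 7, which holds for all $s\in \mathbb{C}$. In both cases the needed inequality is already available, so nothing extra must be proved.
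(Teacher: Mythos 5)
Your proof is correct and follows exactly the route the paper intends: the first equivalence is a direct unwinding of Definition 20 combined with Theorem 19, and inequality (4.10) follows from Lemma 7 (inequality (3.1) with $j=p_{k_{n}}$) together with Lemma 4's inclusion $[a_{n},b_{n}]\subset[x_{n,0},x_{n,1}]$; the paper states precisely this, just without writing out the details. Your extra remark distinguishing the prime and composite cases of $n$ is a fine sanity check but is not a necessary addition, since Lemma 7 already covers every $j\in\{1,\dots,n\}$ uniformly.
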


\begin{proof}
The first part is a direct consequence of Definition 20 and Theorem19. The
second part immediately follows from Lemma 4 and the inequality in (3.1).
\end{proof}

In order to improve Proposition 5 we will use this real function $A_{n}(x,y)$
for the next result.

\begin{proposition}
Let $n+1$ be a prime number greater than $2$ and
\begin{equation*}
R_{n}:=\overline{\left\{ \mbox{Re}s:G_{n}(s)=0\right\} },\ n=2,3,...%
\end{equation*}%
Then $R_{n}\subset R_{n+1}$.
\end{proposition}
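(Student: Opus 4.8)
The plan is to show that the right endpoint appearing in Proposition 5,
\[
\sigma_{0}:=\frac{\log 2}{\log\!\left(1+\tfrac1n\right)},
\]
already lies strictly to the right of the whole critical interval $[a_{n},b_{n}]$; once this is known, Proposition 5 applies verbatim to all of $R_{n}$. Note that $\sigma_{0}$ is characterized by $\left(\tfrac{n+1}{n}\right)^{\sigma_{0}}=2$. The first thing I would establish is the elementary inequality $x_{n,1}<\sigma_{0}$ for every $n\ge 2$, referred to below as $(\ast)$. (For $n=2$ one has $R_{2}=\{0\}$ and $0\in R_{3}$ by Proposition 5, so assume henceforth $n\ge 3$.)

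To prove $(\ast)$: for $2\le k\le n$ we have $\tfrac{k-1}{k}\le\tfrac{n-1}{n}<\tfrac{n}{n+1}$, so raising to the positive power $\sigma_{0}$ gives $\left(\tfrac{k-1}{k}\right)^{\sigma_{0}}<\left(\tfrac{n}{n+1}\right)^{\sigma_{0}}=\tfrac12$, i.e. $(k-1)^{\sigma_{0}}<\tfrac12\,k^{\sigma_{0}}$. Iterating from $k=n$ downward, $(n-i)^{\sigma_{0}}<2^{-i}n^{\sigma_{0}}$ for $1\le i\le n-1$, whence
\[
1+2^{\sigma_{0}}+\cdots+(n-1)^{\sigma_{0}}=\sum_{i=1}^{n-1}(n-i)^{\sigma_{0}}<n^{\sigma_{0}}\sum_{i=1}^{n-1}2^{-i}=\left(1-2^{1-n}\right)n^{\sigma_{0}}<n^{\sigma_{0}}.
\]
The map $\sigma\mapsto\bigl(1+2^{\sigma}+\cdots+(n-1)^{\sigma}\bigr)/n^{\sigma}=\sum_{k=1}^{n-1}(k/n)^{\sigma}$ is continuous and strictly decreasing from $+\infty$ to $0$, so $\{\sigma:1+2^{\sigma}+\cdots+(n-1)^{\sigma}\ge n^{\sigma}\}$ is a half-line with right endpoint $x_{n,1}$; since $\sigma_{0}$ fails this inequality, $x_{n,1}<\sigma_{0}$, which is $(\ast)$.

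Now take $\sigma\in R_{n}$ and choose zeros $s_{j}=\sigma_{j}+it_{j}$ of $G_{n}(s)$ with $\sigma_{j}\to\sigma$. Each $\sigma_{j}\in[a_{n},b_{n}]$, so by Lemma 4, $\sigma_{j}\le x_{n,1}$, and by $(\ast)$, $\sigma_{j}\le x_{n,1}<\sigma_{0}$. From $\sigma_{j}\le x_{n,1}$, the definition of $x_{n,1}$ and the monotonicity just noted give $1+2^{\sigma_{j}}+\cdots+(n-1)^{\sigma_{j}}\ge n^{\sigma_{j}}$; from $\sigma_{j}<\sigma_{0}$ we get $(n+1)^{\sigma_{j}}<2\,n^{\sigma_{j}}$; adding $n^{\sigma_{j}}$ to the first inequality,
\[
G_{n}(\sigma_{j})=1+2^{\sigma_{j}}+\cdots+n^{\sigma_{j}}\ \ge\ 2\,n^{\sigma_{j}}\ >\ (n+1)^{\sigma_{j}}.
\]
Since $n+1$ is prime, $p_{k_{n+1}}=n+1$, so $G_{n+1}^{\ast}(s)=G_{n+1}(s)-(n+1)^{s}=G_{n}(s)$ and $A_{n+1}(x,y)=\lvert G_{n}(x+iy)\rvert-(n+1)^{x}$. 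Look at $y\mapsto A_{n+1}(\sigma_{j},y)$: at $y=0$ it equals $G_{n}(\sigma_{j})-(n+1)^{\sigma_{j}}>0$ by the display above, while at $y=t_{j}$ it equals $\lvert G_{n}(s_{j})\rvert-(n+1)^{\sigma_{j}}=-(n+1)^{\sigma_{j}}<0$. This function is continuous, so by the intermediate value theorem there is $y_{j}$ with $A_{n+1}(\sigma_{j},y_{j})=0$, and Corollary 21 (applied to $n+1$) gives $\sigma_{j}\in R_{n+1}$. As $R_{n+1}$ is closed and $\sigma_{j}\to\sigma$, we conclude $\sigma\in R_{n+1}$. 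Hence $R_{n}\subseteq R_{n+1}$.

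The only step with genuine content is $(\ast)$, and even that is a one-line comparison with a geometric progression; the rest is just the modulus of the zero relation $1+2^{s}+\cdots+(n-1)^{s}=-n^{s}$ together with the single-variable intermediate value argument packaged in Corollary 21. The conceptual point — where a careless attempt would get stuck — is recognizing that no sharp information about $b_{n}$ is needed: Lemma 4 hands us $b_{n}\le x_{n,1}$ for free, and $(\ast)$ then forces every real part of a zero of $G_{n}$ strictly to the left of $\sigma_{0}$, i.e. into the range already covered by Proposition 5.
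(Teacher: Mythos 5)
Your proof is correct, and the core mechanism is the same as the paper's: since $n+1$ is prime, $G_{n+1}^{\ast}(s)=G_{n}(s)$, so at each zero $s_{j}=\sigma_{j}+it_{j}$ of $G_{n}$ one has $A_{n+1}(\sigma_{j},t_{j})=-(n+1)^{\sigma_{j}}<0$ while $A_{n+1}(\sigma_{j},0)\geq 0$; the intermediate value theorem, Corollary 21, and the closedness of $R_{n+1}$ then finish. Where you genuinely add content is the inequality $(\ast)$, namely $x_{n,1}<\frac{\log 2}{\log\left(1+\frac{1}{n}\right)}$, obtained from the geometric-series comparison $\frac{k-1}{k}\leq\frac{n-1}{n}<\frac{n}{n+1}$ for $2\leq k\leq n$; this is correct and is not stated anywhere in the paper. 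It matters because the paper obtains $A_{n+1}(\sigma_{j},0)\geq 0$ simply by ``noticing (4.10),'' but (4.10), read with $n+1$ in place of $n$, is only established for $x$ in the critical interval of $G_{n+1}$, equivalently (since $p_{k_{n+1}}=n+1$) for $x\leq x_{n+1,1}$, whereas a priori $\sigma_{j}$ is only known to lie in $[a_{n},b_{n}]$. One must therefore check $\sigma_{j}\leq x_{n+1,1}$, which the paper does not do; your $(\ast)$ supplies exactly this, since $\sigma_{j}\leq b_{n}\leq x_{n,1}$ (Lemma 4) gives $1+2^{\sigma_{j}}+\cdots+(n-1)^{\sigma_{j}}\geq n^{\sigma_{j}}$, while $\sigma_{j}<\frac{\log 2}{\log\left(1+\frac{1}{n}\right)}$ gives $(n+1)^{\sigma_{j}}<2n^{\sigma_{j}}$, and adding yields $G_{n}(\sigma_{j})\geq 2n^{\sigma_{j}}>(n+1)^{\sigma_{j}}$, i.e. $A_{n+1}(\sigma_{j},0)>0$. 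A further payoff, which you note, is that $(\ast)$ makes the present Proposition an immediate consequence of Proposition 5: since $b_{n}\leq x_{n,1}<\frac{\log 2}{\log\left(1+\frac{1}{n}\right)}$, the cap in Proposition 5 lies strictly to the right of the entire critical interval, so $R_{n}\cap\left[a_{n},\frac{\log 2}{\log\left(1+\frac{1}{n}\right)}\right]=R_{n}$ and no separate argument is needed at all.
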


\begin{proof}
As we saw in the proof of Proposition 5, $R_{2}\subset R_{3}$, and thus the
result is valid for $n=2$. Hence let us assume $n>2$. Let $\sigma $ be a
point of $R_{n}$, then there exists a sequence $\left( s_{j}=\sigma
_{j}+it_{j}\right) _{j=1,2,...}$ of zeros of $G_{n}(s)$ such that $\sigma $ $%
=\lim_{j\rightarrow \infty }\sigma _{j}$. Since $n+1$ is a prime number, $%
G_{n+1}^{\ast }(s)=G_{n}(s)$. Then, for every zero $s_{j}$, we have
\begin{equation*}
A_{n+1}(\sigma _{j},t_{j})=-(n+1)^{\sigma _{j}}<0
\end{equation*}%
and, noticing (4.10),
\begin{equation*}
A_{n+1}(\sigma _{j},0)\geq 0.
\end{equation*}%
Then, according to the continuity of $A_{n+1}(x,y)$, there exists $%
t_{j}^{\prime }$ such that
\begin{equation*}
A_{n+1}(\sigma _{j},t_{j}^{\prime })=0.
\end{equation*}%
Now, because of Corollary 21, $\sigma _{j}\in R_{n+1}$ for all $j$, and
then, since $R_{n+1}$ is closed, we get
\begin{equation*}
\lim_{j\rightarrow \infty }\sigma _{j}=\sigma \in R_{n+1}.
\end{equation*}%
This completes the proof.
\end{proof}

Observe that the previous proof, mutatis mutandis, applies to the zeros of $%
G_{n}^{\ast }(s)$ for arbitrary $n>2$ as well. Indeed, the zeros of $%
G_{n}^{\ast }(s)$ supply non-void intervals contained in $R_{n}$, as we will
prove in the next result.

\begin{theorem}
Let $s=\sigma +it$ be a zero of $G_{n}^{\ast }(s)$ with $a_{n}\leq \sigma
<b_{n}$, $n>2$. Then there exists a non-void interval $J$ such that $\sigma
\in $ $J\subset R_{n}$.
\end{theorem}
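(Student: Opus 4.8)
The plan is to exploit the structure $G_{n}^{\ast }(s)=G_{n}(s)-p_{k_{n}}^{s}$ together with the level-curve characterization of Theorem 19 and the function $A_{n}(x,y)$ from Definition 20. Since $s=\sigma +it$ is a zero of $G_{n}^{\ast }(s)$, we have $G_{n}^{\ast }(\sigma +it)=0$, so in particular $\left\vert G_{n}^{\ast }(\sigma +it)\right\vert =0<p_{k_{n}}^{\sigma }$, which gives $A_{n}(\sigma ,t)<0$. On the other hand, by Corollary 21 (inequality (4.10)), $A_{n}(\sigma ,0)\geq 0$ since $\sigma \in \left[ a_{n},b_{n}\right]$. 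By continuity of $A_{n}$ in its second variable, the level curve $\left\vert G_{n}^{\ast }(z)\right\vert =p_{k_{n}}^{\sigma }$ meets the vertical line $x=\sigma $, and hence $\sigma \in R_{n}$ by Theorem 19. So the point $\sigma $ itself lies in $R_{n}$; the real content is to produce a whole neighborhood-type interval $J$ around (or adjacent to) $\sigma$ that stays inside $R_{n}$.

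\textbf{Building the interval.} To get an interval rather than a single point, I would argue that the strict inequality $A_{n}(\sigma ,t)<0$ persists under small perturbations of the first coordinate: by continuity of $(x,y)\mapsto A_{n}(x,y)$ there is some $\delta >0$ such that $A_{n}(x,t)<0$ for all $x\in (\sigma -\delta ,\sigma +\delta )$. Simultaneously, $A_{n}(x,0)\geq 0$ holds for every $x\in \left[ a_{n},b_{n}\right]$ by (4.10). Therefore, for each $x$ in $(\sigma -\delta ,\sigma +\delta )\cap \left[ a_{n},b_{n}\right]$, the intermediate value theorem applied to $y\mapsto A_{n}(x,y)$ on $[0,t]$ (or $[t,0]$) yields a $y_{x}$ with $A_{n}(x,y_{x})=0$, so by Corollary 21 every such $x$ belongs to $R_{n}$. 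Taking $J:=(\sigma -\delta ,\sigma +\delta )\cap \left[ a_{n},b_{n}\right]$, which is a non-void interval containing $\sigma $ because $a_{n}\leq \sigma <b_{n}$ guarantees $\sigma$ is not the right endpoint (and if $\sigma =a_{n}$ one still gets the half-open interval $[a_{n},a_{n}+\delta )$), we conclude $\sigma \in J\subset R_{n}$.

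\textbf{Main obstacle.} The delicate point is the uniformity of the strict inequality: one must be sure that $A_{n}(x,t)<0$ does not degrade to $A_{n}(x,t)=0$ as $x$ moves away from $\sigma$. This is where one uses that at a genuine zero of $G_{n}^{\ast }$ one has $\left\vert G_{n}^{\ast }(\sigma +it)\right\vert =0$, so $A_{n}(\sigma ,t)=-p_{k_{n}}^{\sigma }$ is strictly negative and bounded away from zero; joint continuity of $A_{n}$ then transfers this to a full two-dimensional neighborhood of $(\sigma ,t)$, in particular to a horizontal segment $\{(x,t):\left\vert x-\sigma \right\vert <\delta \}$. The only other thing to check is that the endpoint condition $\sigma <b_{n}$ is genuinely needed so that $J$ has non-empty interior intersected with $[a_{n},b_{n}]$ — this is precisely the hypothesis stated. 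With these observations in place the argument is complete.
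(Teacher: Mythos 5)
Your proof is correct and follows essentially the same route as the paper's: both reduce to $A_n$, the inequality (4.10), Corollary 21, and an intermediate-value argument along vertical segments. The only cosmetic difference is how the interval is located: you observe $A_n(\sigma,t)=-p_{k_n}^{\sigma}<0$ strictly and propagate this by joint continuity of $A_n$ to a two-sided horizontal neighborhood, whereas the paper instead bounds $|G_n^{\ast}|\le p_{k_n}^{\sigma}\le p_{k_n}^{\sigma+\delta}$ on a closed disk about $s$ and obtains the one-sided interval $[\sigma,\sigma+r]$; both are valid, and your version is marginally cleaner while also being more careful about intersecting with $[a_n,b_n]$ before invoking (4.10).
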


\begin{proof}
Since $G_{n}^{\ast }(s)=0$, given $\epsilon =$ $p_{k_{n}}^{\sigma }>0$, by
continuity, there exists $r>0$ such that for any $z=x+iy\in \overline{D}(s,r)
$ (the closed disk of center $s$ and radius $r)$ we have
\begin{equation*}
\left\vert G_{n}^{\ast }(x+iy)\right\vert \leq p_{k_{n}}^{\sigma }.
\end{equation*}%
Hence, in particular, by taking $z_{\delta }=\sigma +\delta +it$, with $%
0\leq \delta \leq r$, we get
\begin{equation*}
\left\vert G_{n}^{\ast }(\sigma +\delta +it)\right\vert \leq
p_{k_{n}}^{\sigma }\leq p_{k_{n}}^{\sigma +\delta }.
\end{equation*}%
Then, because of Definition 20, it can be deduced that
\begin{equation*}
A_{n}(\sigma +\delta ,t)\leq 0\mbox{, for any }\delta \in \left[ 0,r\right]
.
\end{equation*}%
Therefore, noticing (4.10) and the continuity of the function $A_{n}(x,y)$,
there exist $t_{\delta }^{\prime }$ such that
\begin{equation*}
A_{n}(\sigma +\delta ,t_{\delta }^{\prime })=0\mbox{, for every }\delta \in %
\left[ 0,r\right].
\end{equation*}%
Consequently, from Corollary 21, it follows that $J:=\left[ \sigma ,\sigma +r%
\right] $ $\subset R_{n}$. The proof is then completed.
\end{proof}

\begin{lemma}
Let $z_{0}$ be a zero of $G_{n}(z)$. Then there exists a disk
\begin{equation*}
U:=\left\{ z:\left\vert z-z_{0}\right\vert <r\right\}
\end{equation*}
such that for all $z\in U$ the formula
\begin{equation}
\frac{\partial \arg G_{n}^{\ast }(z)}{\partial x}=-\frac{1}{\left\vert
G_{n}^{\ast }(z)\right\vert }\frac{\partial \left\vert G_{n}^{\ast
}(z)\right\vert }{\partial y}  \tag{4.11}
\end{equation}%
holds.
\end{lemma}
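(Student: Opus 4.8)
The plan is to localize near $z_{0}$ on a disk where $G_{n}^{\ast }$ is zero-free, and then read off (4.11) from the Cauchy--Riemann equations for a branch of $\log G_{n}^{\ast }$. First I would observe that, because $z_{0}$ is a zero of $G_{n}(z)$, the definition of $G_{n}^{\ast }$ (Definition 18) gives $G_{n}^{\ast }(z_{0})=G_{n}(z_{0})-p_{k_{n}}^{z_{0}}=-p_{k_{n}}^{z_{0}}$, which is nonzero. Since $G_{n}^{\ast }$ is entire, hence continuous, there is an $r>0$ such that $G_{n}^{\ast }(z)\neq 0$ for every $z$ in the disk $U:=\left\{ z:\left\vert z-z_{0}\right\vert <r\right\} $; this is the disk claimed in the statement.

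Next, $U$ is simply connected and $G_{n}^{\ast }$ is analytic and nowhere vanishing on $U$, so there exists a single-valued analytic branch $h(z)=\log G_{n}^{\ast }(z)$ on $U$. Writing $h=u+iv$, we have $u(x,y)=\log \left\vert G_{n}^{\ast }(x+iy)\right\vert $ and $v(x,y)=\arg G_{n}^{\ast }(x+iy)$, where $\arg G_{n}^{\ast }$ denotes precisely this continuous (indeed real-analytic) determination of the argument on $U$; any other branch differs from it by an additive constant, so the partial derivative $\partial \arg G_{n}^{\ast }/\partial x$ appearing in (4.11) is unambiguous. Both $u$ and $v$ are real-analytic on $U$, so all the partial derivatives in (4.11) are well defined there.

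Finally I would apply the Cauchy--Riemann equations to the analytic function $h$: from $\partial u/\partial y=-\partial v/\partial x$ together with the identity $\partial u/\partial y=\left\vert G_{n}^{\ast }(z)\right\vert ^{-1}\,\partial \left\vert G_{n}^{\ast }(z)\right\vert /\partial y$, one obtains
\begin{equation*}
\frac{\partial \arg G_{n}^{\ast }(z)}{\partial x}=\frac{\partial v}{\partial x}=-\frac{\partial u}{\partial y}=-\frac{1}{\left\vert G_{n}^{\ast }(z)\right\vert }\,\frac{\partial \left\vert G_{n}^{\ast }(z)\right\vert }{\partial y},
\end{equation*}
which is exactly (4.11), valid on all of $U$. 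There is essentially no obstacle here: the only point requiring attention is the existence of a smooth single-valued branch of $\arg G_{n}^{\ast }$ on $U$, and this is precisely what the non-vanishing of $G_{n}^{\ast }$ on the simply connected disk $U$ guarantees; the remainder is the routine differentiation of the relation $\log G_{n}^{\ast }=\log \left\vert G_{n}^{\ast }\right\vert +i\arg G_{n}^{\ast }$.
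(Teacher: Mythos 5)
Your proof is correct and follows essentially the same route as the paper: observe $G_{n}^{\ast}(z_{0})=-p_{k_{n}}^{z_{0}}\neq 0$, take a zero-free disk $U$, use simple connectivity to get an analytic branch of $\log G_{n}^{\ast}$, and read off (4.11) from the Cauchy--Riemann equations. No gaps; the only addition you make over the paper is the (helpful) remark that the choice of branch is irrelevant since branches differ by additive constants.
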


\begin{proof}
Since $G_{n}(z_{0})=0$, it is true that $G_{n}^{\ast
}(z_{0})=-p_{k_{n}}^{z_{0}}\neq 0$. Then, we can determine a disk $%
U:=\left\{ z:\left\vert z-z_{0}\right\vert <r\right\} $ such that $%
G_{n}^{\ast }(z)\neq 0$ for all $z\in U$. Now, because $U$ is a simply
connected set and noticing $\left[ 1\mbox{, p. 52}\right] $, there exists an
analytic logarithm on $U$ such that%
\begin{equation*}
\log G_{n}^{\ast }(z)=\ln \left\vert G_{n}^{\ast }(z)\right\vert +i\arg
G_{n}^{\ast }(z).
\end{equation*}%
Then the real and imaginary part of $\log G_{n}^{\ast }(z)$, i.e. the two
functions $\ln \left\vert G_{n}^{\ast }(z)\right\vert $ and $\arg
G_{n}^{\ast }(z),$ belong to class $\mathcal{C}^{\infty }$ on $U$ and are
harmonic conjugates, therefore, by the Cauchy-Riemann equations, we obtain
\begin{equation*}
\frac{\partial \arg G_{n}^{\ast }(z)}{\partial x}=-\frac{1}{\left\vert
G_{n}^{\ast }(z)\right\vert }\frac{\partial \left\vert G_{n}^{\ast
}(z)\right\vert }{\partial y}\mbox{ for all }z\in U,
\end{equation*}%
which is the desired formula.
\end{proof}

\begin{lemma}
Let $z_{0}=x_{0}+iy_{0}$ be a zero of $G_{n}(z)$ with $a_{n}<x_{0}<b_{n}$.
If for all $\epsilon >0$ there exist two values $x_{1}\in \left(
x_{0}-\epsilon ,x_{0}\right) $ and $x_{2}\in \left( x_{0},x_{0}+\epsilon
\right) $ of the critical interval $\left[ a_{n},b_{n}\right] $ satisfying $%
A_{n}(x_{1},y)$, $A_{n}(x_{2},y)\neq 0$ for all $y\in
\mathbb{R}
$, then $\frac{\partial A_{n}}{\partial x}(x_{0},y_{0})=0$ and consequently
\begin{equation}
\frac{\partial \left\vert G_{n}^{\ast }(z)\right\vert }{\partial x}%
(z_{0})=p_{k_{n}}^{x_{0}}\log p_{k_{n}}.  \tag{4.12}
\end{equation}
\end{lemma}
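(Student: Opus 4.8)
The plan is to prove the identity by contradiction. If $\partial A_n/\partial x$ did not vanish at $(x_0,y_0)$, then along the horizontal line $y=y_0$ the continuous function $A_n$ would pass through $0$ with a definite sign change at $x=x_0$; combining this with the hypothesis, which supplies vertical lines arbitrarily close to $x=x_0$ on both sides on which $A_n$ never vanishes, and with the lower bound $(4.10)$, I would force $A_n$ to be simultaneously positive and negative at a common point. Lemma 25 is not needed here; the argument is purely a continuity/sign argument.

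First I would record the elementary facts. Since $z_0=x_0+iy_0$ is a zero of $G_n$, Definition 18 gives $G_n^{\ast}(z_0)=-p_{k_n}^{z_0}\neq 0$, hence $\left\vert G_n^{\ast}(z_0)\right\vert=p_{k_n}^{x_0}$, and so, by Definition 20,
\[ A_n(x_0,y_0)=0 . \]
Because $G_n^{\ast}(z_0)\neq 0$, there is a disk centered at $z_0$ on which $G_n^{\ast}$ is zero-free; on it $\left\vert G_n^{\ast}(x+iy)\right\vert$, and therefore $A_n(x,y)=\left\vert G_n^{\ast}(x+iy)\right\vert-p_{k_n}^{x}$, is of class $\mathcal{C}^{\infty}$. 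In particular $A_n(\cdot,y_0)$ is differentiable at $x_0$, so, writing $c:=\tfrac{\partial A_n}{\partial x}(x_0,y_0)$ and using $A_n(x_0,y_0)=0$, I get $A_n(x,y_0)=c\,(x-x_0)+o(\left\vert x-x_0\right\vert)$ as $x\to x_0$. I also note that $A_n$ is continuous on all of $\mathbb{R}^2$ (the modulus is continuous everywhere), a fact I will use on the nearby vertical lines.

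Next I would carry out the contradiction. Assume $c\neq 0$. Then there is $\delta>0$ such that $A_n(x,y_0)$ has the sign of $c\,(x-x_0)$ for every $x$ with $0<\left\vert x-x_0\right\vert<\delta$. Applying the hypothesis with $\epsilon=\delta$ produces $x_1\in(x_0-\delta,x_0)$ and $x_2\in(x_0,x_0+\delta)$, both lying in $[a_n,b_n]$, with $A_n(x_1,y)\neq 0$ and $A_n(x_2,y)\neq 0$ for all $y\in\mathbb{R}$. For $i=1,2$ the continuous map $y\mapsto A_n(x_i,y)$ never vanishes, while $(4.10)$ gives $A_n(x_i,0)\geq 0$; hence $A_n(x_i,0)>0$, and the intermediate value theorem forces $A_n(x_i,y)>0$ for all $y$, in particular $A_n(x_i,y_0)>0$. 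But the sign condition gives $A_n(x_1,y_0)<0$ when $c>0$ (since $x_1<x_0$) and $A_n(x_2,y_0)<0$ when $c<0$ (since $x_2>x_0$); either alternative is a contradiction. Therefore $c=0$, that is, $\tfrac{\partial A_n}{\partial x}(x_0,y_0)=0$.

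Finally, differentiating $A_n(x,y)=\left\vert G_n^{\ast}(x+iy)\right\vert-p_{k_n}^{x}$ in $x$ gives $\tfrac{\partial A_n}{\partial x}=\tfrac{\partial\left\vert G_n^{\ast}\right\vert}{\partial x}-p_{k_n}^{x}\log p_{k_n}$, so evaluating at $(x_0,y_0)$ and using the previous step yields $\tfrac{\partial\left\vert G_n^{\ast}(z)\right\vert}{\partial x}(z_0)=p_{k_n}^{x_0}\log p_{k_n}$, which is $(4.12)$. The one point requiring care is the simultaneous exploitation of the two features of $x_1$ and $x_2$ — membership in the critical interval $[a_n,b_n]$ (so that $(4.10)$ may be invoked) and proximity to $x_0$ within $\delta$ (so that the first-order sign expansion applies) — but the "for all $\epsilon>0$" form of the hypothesis delivers exactly such points, so this is a minor obstacle rather than a genuine difficulty.
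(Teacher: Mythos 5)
Your proof is correct and follows essentially the same strategy as the paper: assume the $x$-partial derivative of $A_n$ is nonzero at $(x_0,y_0)$, use the resulting sign change of $A_n(\cdot,y_0)$ near $x_0$ together with the nonnegativity $(4.10)$ at $y=0$ and continuity to produce a zero of $A_n$ on a nearby vertical line, contradicting the hypothesis. The only differences are cosmetic: you treat the two signs of $c$ in a single argument using both $x_1$ and $x_2$, and you phrase the intermediate-value step as ``$A_n(x_i,\cdot)$ never vanishes and is nonnegative at $0$, hence is everywhere positive,'' which lets you skip the paper's normalization $y_0>0$ (the paper instead exhibits a zero $y_x\in(0,y_0)$ case by case); both routes reach the same contradiction.
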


\begin{proof}
Firstly, as $G_{n}(z)$ does not have any real zero and $G_{n}(\overline{z})=%
\overline{G_{n}(z)}$, we can suppose without loss of generality that $y_{0}>0
$. On the other hand, since $G_{n}(z_{0})=0$, from Definition 20 we have that%
\begin{equation*}
A_{n}(x_{0},y_{0})=0.
\end{equation*}%
By assuming $\frac{\partial A_{n}}{\partial x}(x_{0},y_{0})<0$, it is
deduced the following
\begin{equation*}
\frac{\partial A_{n}}{\partial x}(x_{0},y_{0})=\lim_{x\rightarrow 0^{+}}%
\frac{A_{n}(x_{0}+x,y_{0})}{x}<0.
\end{equation*}%
Therefore, there exists an $\epsilon >0$ such that
\begin{equation*}
A_{n}(x_{0}+x,y_{0})<0\mbox{, for all }x\in \left( 0,\epsilon \right).
\end{equation*}%
On the other hand, because of (4.10),
\begin{equation*}
A_{n}(x_{0}+x,0)\geq 0.
\end{equation*}%
Then, by continuity, there exists some value of $y$, say $y_{x}\in \left(
0,y_{0}\right) $, such that
\begin{equation*}
A_{n}(x_{0}+x,y_{x})=0\mbox{, for every }x\in \left( 0,\epsilon \right).
\end{equation*}%
This contradicts the hypothesis and therefore
\begin{equation*}
\frac{\partial A_{n}}{\partial x}(x_{0},y_{0})\geq 0.
\end{equation*}

Assuming that $\frac{\partial A_{n}}{\partial x}(x_{0},y_{0})>0$, we find
that
\begin{equation*}
\frac{\partial A_{n}}{\partial x}(x_{0},y_{0})=\lim_{x\rightarrow 0^{+}}%
\frac{A_{n}(x_{0}-x,y_{0})}{-x}>0
\end{equation*}%
and conclude that there exists some $\epsilon >0$ such that
\begin{equation*}
A_{n}(x_{0}-x,y_{0})<0\mbox{ for all }x\in \left( 0,\epsilon \right).
\end{equation*}%
Now, by repeating verbatim the above argument, we are led to a contradiction
again. Therefore
\begin{equation}
\frac{\partial A_{n}}{\partial x}(x_{0},y_{0})=0.  \tag{4.13}
\end{equation}%
Finally, looking at (4.13) and according to Definition 20, by taking the
partial derivative of $A_{n}(x,y)$ with respect to $x$ at the point $%
z_{0}=x_{0}+iy_{0}$, the desired formula is derived:
\begin{equation*}
\frac{\partial \left\vert G_{n}^{\ast }(z)\right\vert }{\partial x}%
(z_{0})=p_{k_{n}}^{x_{0}}\log p_{k_{n}}.
\end{equation*}
\end{proof}

The main result of the present paper consists of proving that the real
projections of the simple zeros of $G_{n}(s):=1+2^{s}+...+n^{s}$ are not
isolated points.

\begin{theorem}
Let $s_{0}=\sigma _{0}+it_{0}$ be a simple zero of $G_{n}(s)$, $n>2$. Then,
there exist $\epsilon _{1}$, $\epsilon _{2}\geq 0$, with $\epsilon _{1}+$ $%
\epsilon _{2}>0$, such that $\left( \sigma _{0}-\epsilon _{1},\sigma
_{0}+\epsilon _{2}\right) \subset $ $R_{n}$.
\end{theorem}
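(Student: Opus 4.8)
The plan is to argue by contradiction: suppose $\sigma_0$ is an isolated point of $R_n$. Then there is some $\epsilon>0$ so that no point of $(\sigma_0-\epsilon,\sigma_0)\cup(\sigma_0,\sigma_0+\epsilon)$ lies in $R_n$, which by Corollary~21 means $A_n(x,y)\neq 0$ for all $y\in\mathbb{R}$ whenever $x$ is in this punctured neighborhood. This is exactly the hypothesis of Lemma~26, so we may conclude $\frac{\partial A_n}{\partial x}(z_0)=0$ and hence $\frac{\partial |G_n^{\ast}(z)|}{\partial x}(z_0)=p_{k_n}^{x_0}\log p_{k_n}$. The goal is to combine this with the hypothesis that $s_0$ is a \emph{simple} zero of $G_n$ to reach a contradiction, presumably by also extracting information about $\frac{\partial |G_n^{\ast}(z)|}{\partial y}(z_0)$ and showing the resulting gradient of $|G_n^{\ast}|$ at $z_0$ is inconsistent with $|G_n^{\ast}(z_0)|=p_{k_n}^{x_0}$ being a value that the level curve can only touch the vertical line $x=\sigma_0$ at (and nowhere nearby).

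**Using simplicity and the level-curve machinery.** Since $s_0$ is a simple zero of $G_n$, we have $G_n'(s_0)\neq 0$; because $G_n^{\ast}(s)=G_n(s)-p_{k_n}^s$, this gives $(G_n^{\ast})'(s_0)=G_n'(s_0)-p_{k_n}^{s_0}\log p_{k_n}$, which is generically nonzero, and in any case $G_n^{\ast}(s_0)=-p_{k_n}^{s_0}\neq 0$. So near $z_0$ the function $\log G_n^{\ast}$ is analytic and Lemma~24 applies, giving $\frac{\partial \arg G_n^{\ast}}{\partial x}=-\frac{1}{|G_n^{\ast}|}\frac{\partial |G_n^{\ast}|}{\partial y}$. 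I would also compute $\arg G_n^{\ast}(z_0)$: from $G_n^{\ast}(s_0)=-p_{k_n}^{s_0}=-p_{k_n}^{\sigma_0}e^{it_0\log p_{k_n}}$ we read off $\arg G_n^{\ast}(z_0)=\pi+t_0\log p_{k_n}$ (mod $2\pi$). Differentiating the relation $G_n^{\ast}=G_n-p_{k_n}^s$ and using $G_n(s_0)=0$, one gets $(G_n^{\ast})'(s_0)=G_n'(s_0)-p_{k_n}^{s_0}\log p_{k_n}$, and writing $\frac{(G_n^{\ast})'(s_0)}{G_n^{\ast}(s_0)}=\frac{\partial}{\partial x}\log G_n^{\ast}(z_0)=\frac{\partial \ln|G_n^{\ast}|}{\partial x}+i\frac{\partial \arg G_n^{\ast}}{\partial x}$ lets us express both partials of $|G_n^{\ast}|$ at $z_0$ in terms of $G_n'(s_0)$. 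Feeding in the forced value $\frac{\partial |G_n^{\ast}|}{\partial x}(z_0)=p_{k_n}^{x_0}\log p_{k_n}$ from Lemma~26 then pins down $G_n'(s_0)$ almost completely — I expect it forces $G_n'(s_0)=0$ (or forces the level curve $|G_n^{\ast}(z)|=p_{k_n}^{\sigma_0}$ to be tangent to, hence locally on one side of, the line $x=\sigma_0$, contradicting that $z_0$ is a crossing point that the curve genuinely passes through as in Theorem~19), which is the contradiction with simplicity.

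**Boundary-case bookkeeping.** The statement allows $\epsilon_1$ or $\epsilon_2$ to vanish individually (only $\epsilon_1+\epsilon_2>0$ is required), which matches the asymmetry at the endpoints $a_n$, $b_n$ of the critical interval: if $\sigma_0=a_n$ one cannot hope to move left, and Theorem~23 already furnishes an interval $[\sigma_0,\sigma_0+r]\subset R_n$ when $z_0$ happens to be a zero of $G_n^{\ast}$. So in the proof I would first dispose of the case $\sigma_0\in\{a_n,b_n\}$ (or $\sigma_0$ being an endpoint situation) by invoking the one-sided interval from Theorem~23 and the earlier propagation results, and then treat the main case $a_n<\sigma_0<b_n$, where a two-sided neighborhood is sought, via the contradiction argument above. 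Lemma~26's hypothesis is stated precisely so that \emph{failure} of $\sigma_0$ to be interior to a subinterval of $R_n$ on \emph{both} sides triggers the derivative conclusion; if only one side fails we still get a one-sided interval, which already suffices for $\epsilon_1+\epsilon_2>0$.

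**Expected main obstacle.** The delicate point will be the final algebraic/geometric step: showing that the forced value of $\frac{\partial|G_n^{\ast}|}{\partial x}(z_0)$ together with the Cauchy–Riemann relation of Lemma~24 is genuinely incompatible with $G_n'(s_0)\neq 0$. One has to be careful that $\frac{\partial|G_n^{\ast}|}{\partial x}(z_0)=p_{k_n}^{x_0}\log p_{k_n}$ says precisely that, to first order, $|G_n^{\ast}(x+iy_0)|$ and $p_{k_n}^{x}$ agree in their $x$-derivative at $x_0$ — i.e. the function $A_n(x,y_0)$ has a critical point at $x_0$ — and then to extract from the simplicity of the zero the fact that $A_n(x,y_0)$ (or the level curve) must actually change sign transversally there. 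The cleanest route is probably: simplicity $\Rightarrow$ $(G_n^{\ast})'(z_0)\neq 0$ $\Rightarrow$ by Lemma~13 the level curve $|G_n^{\ast}|=p_{k_n}^{\sigma_0}$ is locally a single smooth arc through $z_0$; the forced vanishing of $\partial_x A_n$ makes that arc tangent to the vertical line $x=\sigma_0$ at $z_0$; but a smooth arc tangent to a line either stays on one side or crosses it, and in either sub-case one produces, for all small $|x-\sigma_0|$ of the appropriate sign(s), a $y$ with $A_n(x,y)=0$, i.e. $x\in R_n$ — contradicting isolation. Nailing down that last "tangent arc must hit both sides or at least one side" claim rigorously, using the $\mathcal{C}^\infty$ regularity from Lemma~13 and the sign information $A_n(x,0)\geq 0$ from Corollary~21, is where the real work lies.
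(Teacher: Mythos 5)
Your overall strategy (contradiction through Lemma~26, then a computation showing $G_n'(s_0)=0$) captures the right key ingredient, but it differs from the paper's route and contains a genuine gap at the crucial closing step. The paper does not argue by direct contradiction from Lemma~26; instead it introduces the arc-connected component $L_0$ of the level curve $\left\vert G_n^{\ast}(z)\right\vert=p_{k_n}^{\sigma_0}$ through $s_0$, uses Lemma~11, Lemma~13 and Proposition~17 to show the projection $P_0$ of $L_0$ is a nondegenerate interval, and then case-splits on whether $P_0$ extends to the right or only to the left of $\sigma_0$. Only in the hard sub-case of Case~2 does a contradiction appear, and there it is assembled from \emph{two} pieces, not one: formula (4.12) ($\partial_x A_n(\sigma_0,t_0)=0$) \emph{and} formula (4.19) ($\partial_x\arg G_n^{\ast}(s_0)=0$, equivalently $\partial_y A_n(\sigma_0,t_0)=0$). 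Only with both does the identity
\[
G_n'(s_0)=e^{i\arg G_n^{\ast}(s_0)}\frac{\partial\left\vert G_n^{\ast}\right\vert}{\partial x}(s_0)+iG_n^{\ast}(s_0)\frac{\partial\arg G_n^{\ast}}{\partial x}(s_0)+p_{k_n}^{s_0}\log p_{k_n}
\]
collapse to $G_n(s_0)\log p_{k_n}=0$, contradicting simplicity.

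Here is where your proposal goes wrong. You assert that the forced value $\partial_x\left\vert G_n^{\ast}\right\vert(z_0)=p_{k_n}^{x_0}\log p_{k_n}$ from Lemma~26 already "pins down $G_n'(s_0)$" and "I expect it forces $G_n'(s_0)=0$." It does not: a short computation using $G_n^{\ast}(s_0)=-p_{k_n}^{s_0}$ and (4.12) gives exactly
\[
G_n'(s_0)=-\,i\,p_{k_n}^{s_0}\,\frac{\partial\arg G_n^{\ast}}{\partial x}(s_0),
\]
which is nonzero unless $\partial_x\arg G_n^{\ast}(s_0)=0$. So (4.12) alone is not a contradiction, and your alternative "tangency" reading is equally off: tangency of $\left\vert G_n^{\ast}\right\vert=p_{k_n}^{\sigma_0}$ to the vertical line $x=\sigma_0$ is governed by $\partial_y\left\vert G_n^{\ast}\right\vert(z_0)$, not by (4.12). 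Your claim "simplicity $\Rightarrow(G_n^{\ast})'(z_0)\neq 0$" is also unjustified: $(G_n^{\ast})'(s_0)=G_n'(s_0)-p_{k_n}^{s_0}\log p_{k_n}$ can very well vanish while $G_n'(s_0)\neq 0$, so the appeal to Lemma~13 to get a unique smooth arc is not licensed. The missing step you need is precisely the one the paper supplies: from the contradiction hypothesis take a sequence $x_m\to\sigma_0$ with $A_n(x_m,y)\geq 0$ for all $y$ (indeed $>0$, since $x_m\notin R_n$ and $A_n(x_m,0)\geq 0$), pass to the limit to get $A_n(\sigma_0,y)\geq 0$ for all $y$; combined with $A_n(\sigma_0,t_0)=0$ this forces $\partial_y A_n(\sigma_0,t_0)=0$, i.e. $\partial_x\arg G_n^{\ast}(s_0)=0$ by Lemma~24. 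With that in hand, your Lemma~26 step and the final derivative identity do close the loop, and your more direct route (bypassing $L_0$, $P_0$ and Proposition~17) would in fact be a clean simplification of the paper's argument — but as written, the closing step is based on two incorrect claims and the proof does not go through. You should also replace your "suppose $\sigma_0$ is isolated" with the exact negation of the theorem's conclusion (failure of an interval on \emph{each} side), since "not isolated" is weaker than the statement you are asked to prove; fortunately the weaker negation still yields Lemma~26's hypothesis and the limit argument just described.
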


\begin{proof}
As we did in the preceding Lemma we can assume, without loss of generality,
that $t_{0}>0$. Since $G_{n}(s_{0})=0$, we have $\left\vert G_{n}^{\ast
}(s_{0})\right\vert =p_{k_{n}}^{\sigma _{0}}$ and so the level curve $%
\left\vert G_{n}^{\ast }(s)\right\vert =p_{k_{n}}^{\sigma _{0}}$ \ passes
through $s_{0}$. Then, let us denote by $L_{0}$ the arc-connected component
that passes through the point $s_{0}.$ Note that $L_{0}$ is not reduced to $%
s_{0}$ because either Lemma 13 (if $s_{0}$ is a non-critical point of $%
G_{n}^{\ast }(s)$), or Proposition 17 (if $s_{0}$ is a critical point of $%
G_{n}^{\ast }(s)$) apply. Now let%
\begin{equation*}
P_{0}:=\left\{ \mbox{Re}s:s\in L_{0}\right\}
\end{equation*}%
\ be its projection on the real axis. Firstly, we claim that $P_{0}$ is a
real interval not reduced to the point $\sigma _{0}$. Indeed, if we suppose
that this is not so, then, necessarily, $L_{0}$ would be contained in the
vertical line of equation $x=$ $\sigma _{0}$. On the other hand, from Lemma
11, the point $\sigma _{0}$ does not belong to $L_{0}.$ Therefore, as $L_{0}$
is a closed set contained in the vertical $x=$ $\sigma _{0}$, there exists a
point $\omega _{0}:=\sigma _{0}+iT\in L_{0}$, where
\begin{equation}
T=\min \left\{ t>0:\sigma _{0}+it\in L_{0}\right\}.  \tag{4.14}
\end{equation}%
Hence, $\omega _{0}$ is necessarily a non-critical point of $G_{n}^{\ast }(s)
$. Indeed, if $\omega _{0}$ were a critical point, then Proposition 17
implies the existence of at least four branches passing through $\omega _{0}$%
, which is impossible because we are assuming that $L_{0}$ is contained in $%
x=$ $\sigma _{0}$. Hence, $\omega _{0}$ is non-critical point and then by
applying Lemma 13, there exists a point $u_{0}:=\sigma _{0}+iT_{0}\in L_{0}$
with $0<T_{0}<T$, which contradicts (4.14). Hence, what we claime is true
and therefore there exist $\delta _{1}$, $\delta _{2}\geq 0$, with $\delta
_{1}+$ $\delta _{2}>0$, such that
\begin{equation}
\left( \sigma _{0}-\delta _{1},\sigma _{0}+\delta _{2}\right) \subset
P_{0}\cap \left[ a_{n},b_{n}\right],  \tag{4.15}
\end{equation}%
where $\left[ a_{n},b_{n}\right] $ is the critical interval of $G_{n}(s)$.

Now, let us consider the two possible cases in virtue of the values of $%
\delta _{1}$ and $\delta _{2}$.

Case 1: $\delta _{2}>0$. In this particular case, (4.15) means that
\begin{equation*}
\left( \sigma _{0},\sigma _{0}+\delta _{2}\right) \subset P_{0}\cap \left[
a_{n},b_{n}\right].
\end{equation*}%
Let $x$ be an arbitrary point of $\left( \sigma _{0},\sigma _{0}+\delta
_{2}\right) $, then $x\in P_{0}$ and therefore there exists a point $%
z=x+iy\in $ $L_{0}$, with $y>0$. Thus $\left\vert G_{n}^{\ast
}(z)\right\vert =p_{k_{n}}^{\sigma _{0}}$ and, in consequence, we obtain%
\begin{equation}
\left\vert G_{n}^{\ast }(z)\right\vert =p_{k_{n}}^{\sigma _{0}}<p_{k_{n}}^{x}%
.  \tag{4.16}
\end{equation}%
Now, according to Definition 20, the expression (4.16) is equivalent to
having
\begin{equation*}
A_{n}(x,y)<0.
\end{equation*}%
On the other hand, from (4.10) we have
\begin{equation*}
A_{n}(x,0)\geq 0
\end{equation*}%
and hence, because of the continuity of $A_{n}(x,y)$, there exists $0\leq
y_{x}\leq y$ such that
\begin{equation*}
A_{n}(x,y_{x})=0.
\end{equation*}%
Now, noticing Corollary 21, it follows that $x\in R_{n}$ and then the Case 1
is proved by just taking $\epsilon _{1}=0$ and $\epsilon _{2}=\delta _{2}$.

Case 2: $\delta _{2}=0$. In this case, (4.14) implies that
\begin{equation*}
\left( \sigma _{0}-\delta _{1},\sigma _{0}\right) \subset P_{0}\cap \left[
a_{n},b_{n}\right]
\end{equation*}%
with $\delta _{1}>0$. Since $\left\vert G_{n}^{\ast }(s_{0})\right\vert \neq
0$, the function $A_{n}(x,y)$ is differentiable on a neighborhood of $%
(\sigma _{0},t_{0})$. Let us assume the existence of some $\epsilon _{1}$ $%
\in \left( 0,\delta _{1}\right] $ such that, for any $x\in \left( \sigma
_{0}-\epsilon _{1},\sigma _{0}\right) $, there exists $y_{x}\in
\mathbb{R}
$ such that $A_{n}(x,y_{x})<0$. At this point, by repeating verbatim the
argument of Case 1, the Case 2 follows by taking
\begin{equation*}
\epsilon _{1}=\delta _{1},\ \epsilon _{2}=0.
\end{equation*}

Now, we claim that the Case 2 always leads to the above situation. Indeed,
if this were not so we would have the following: for any $\epsilon _{1}$ $%
\in \left( 0,\delta _{1}\right] $ there exists some $x_{\epsilon }\in \left(
\sigma _{0}-\epsilon _{1},\sigma _{0}\right) $ such that
\begin{equation}
A_{n}(x_{\epsilon },y)\geq 0\mbox{ for all }y\in
\mathbb{R}
.  \tag{4.17}
\end{equation}%
Then, under this supposition, by taking $\epsilon _{1}=\frac{1}{m}$, for
sufficiently large $m$, there exists $x_{m}\in \left( \sigma _{0}-\frac{1}{m%
},\sigma _{0}\right) $ such that $A_{n}(x_{m},y)\geq 0$ for all $y\in
\mathbb{R}
$. For every fixed value of $y\in
\mathbb{R}
$, by taking the limit, it can be found that
\begin{equation}
\lim_{m\rightarrow \infty }A_{n}(x_{m},y)=A_{n}(\sigma _{0},y)\geq 0.
\tag{4.18}
\end{equation}%
Then, since $A_{n}(\sigma _{0},t_{0})=0$ and taking into account (4.18), we
get
\begin{equation*}
\frac{\partial A_{n}}{\partial y}(\sigma _{0},t_{0})=\lim_{y\rightarrow
0^{+}}\frac{A_{n}(\sigma _{0},t_{0}+y)}{y}\geq 0.
\end{equation*}%
On the other hand, (4.18) also implies
\begin{equation*}
\frac{\partial A_{n}}{\partial y}(\sigma _{0},t_{0})=\lim_{y\rightarrow
0^{-}}\frac{A_{n}(\sigma _{0},t_{0}+y)}{y}\leq 0
\end{equation*}%
and therefore
\begin{equation*}
\frac{\partial A_{n}}{\partial y}(\sigma _{0},t_{0})=0.
\end{equation*}%
Consequently, from Definition (20), it follows that
\begin{equation*}
\frac{\partial \left\vert G_{n}^{\ast }(x+iy)\right\vert }{\partial y}%
(\sigma _{0},t_{0})=0
\end{equation*}%
and, according to formula (4.11), we obtain
\begin{equation}
\frac{\partial \arg G_{n}^{\ast }(s_{0})}{\partial x}=0.  \tag{4.19}
\end{equation}%
Finally, using (4.19), formula (4.12), and taking into account that%
\begin{equation*}
\frac{\partial G_{n}(z)}{\partial x}=\frac{\partial G_{n}^{\ast }(z)}{%
\partial x}+\frac{\partial p_{k_{n}}^{z}}{\partial x}=
\end{equation*}%
\begin{equation*}
=e^{i\arg G_{n}^{\ast }(z)}\frac{\partial \left\vert G_{n}^{\ast
}(z)\right\vert }{\partial x}+iG_{n}^{\ast }(z)\frac{\partial \arg
G_{n}^{\ast }(z)}{\partial x}+p_{k_{n}}^{z}\log p_{k_{n}},
\end{equation*}%
at the point $s_{0}$, we get that
\begin{equation*}
G_{n}^{\prime }(s_{0})=\frac{\partial G_{n}(s_{0})}{\partial x}=e^{i\arg
G_{n}^{\ast }(s_{0})}p_{k_{n}}^{\sigma _{0}}\log
p_{k_{n}}+p_{k_{n}}^{s_{0}}\log p_{k_{n}}=
\end{equation*}%
\begin{equation*}
\left( G_{n}^{\ast }(s_{0})+p_{k_{n}}^{s_{0}}\right) \log
p_{k_{n}}=G_{n}(s_{0})\log p_{k_{n}}=0,
\end{equation*}%
which is a contradiction because $s_{0}$ is a simple zero of $G_{n}(s)$.
Therefore the claim is true and the proof of the theorem is completed.
\end{proof}

\section{Existence of simple zeros of $G_{n}(s)$}

In $\left[ 12\mbox{, Proposition }1\right] $ the authors studied the maximum
order of multiplicity of the zeros of the functions $G_{n}(s)$. Furthermore,
there they also showed that all the zeros of $G_{2}(s)$, $G_{3}(s)$ and $%
G_{4}(s)$ are simple. Now, in this section, we present a result that
improves what \ was done in that paper by proving the existence of vertical
strips where all the zeros of $G_{n}(s)$ are simple, provided that $n$ is a
prime number.

\begin{proposition}
Let $n>2$ be a prime number, $b_{n}:=\sup \left\{ \mbox{Re}%
s:G_{n}(s)=0\right\} $ and $b_{n}^{\prime }:=\sup \left\{ \mbox{Re}%
s:G_{n}^{\prime }(s)=0\right\} $, where $G_{n}^{\prime }(s)$
denotes the derivative of $G_{n}(s)$. Then $b_{n}^{\prime
}<b_{n}$.
\end{proposition}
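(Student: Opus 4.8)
The plan is to trap the two quantities between real numbers defined by polygon-type inequalities in the spirit of Definition 3. Introduce
\[
\beta:=\sup\Big\{\sigma\in\mathbb{R}:\ \sum_{k=2}^{n-1}(\log k)k^{\sigma}\ \ge\ (\log n)n^{\sigma}\Big\},
\]
which is a well-defined real number: the set is non-empty (it contains a left half-line, since as $\sigma\to-\infty$ the term $(\log 2)2^{\sigma}$ dominates $(\log n)n^{\sigma}$) and bounded above (since $(\log n)n^{\sigma}$ eventually overtakes the right-hand side). I shall establish the three facts
\[
b_n'\le\beta,\qquad \beta<x_{n,1},\qquad b_n=x_{n,1},
\]
which together give $b_n'\le\beta<x_{n,1}=b_n$. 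Only the last of these uses that $n$ is prime.

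The first fact is the analogue for $G_n'$ of the inequality $b_n\le x_{n,1}$ proved in Lemma 4: if $G_n'(s)=0$ with $\mbox{Re}\,s=\sigma$, then writing $G_n'(s)=(\log n)n^{s}+\sum_{k=2}^{n-1}(\log k)k^{s}$ we get $(\log n)n^{s}=-\sum_{k=2}^{n-1}(\log k)k^{s}$, and taking moduli $(\log n)n^{\sigma}\le\sum_{k=2}^{n-1}(\log k)k^{\sigma}$, so $\sigma\le\beta$; hence $b_n'\le\beta$. For the second fact, the set defining $\beta$ is closed, so at $\sigma_0:=\beta$ one has $(\log n)n^{\sigma_0}\le\sum_{k=2}^{n-1}(\log k)k^{\sigma_0}$; since $\log k<\log n$ for $2\le k\le n-1$, this gives $(\log n)n^{\sigma_0}<(\log n)\big(2^{\sigma_0}+\cdots+(n-1)^{\sigma_0}\big)$, i.e.\ $n^{\sigma_0}<1+2^{\sigma_0}+\cdots+(n-1)^{\sigma_0}$, the inequality being strict. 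By continuity this strict inequality persists on a neighbourhood of $\sigma_0$, and every $\sigma$ satisfying $n^{\sigma}\le 1+2^{\sigma}+\cdots+(n-1)^{\sigma}$ lies in the set whose supremum is $x_{n,1}$; therefore $\sigma_0<x_{n,1}$.

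The third fact, $b_n=x_{n,1}$, is the crux. By Lemma 4 we already have $b_n\le x_{n,1}$, so it remains to prove $x_{n,1}\in R_n$, and for this I apply Theorem 1. As $n$ is prime we have $n=p_{k_n}$, whence $\mathbf{c}_n$ is the $k_n$-th canonical basis vector of $\mathbb{R}^{k_n}$; moreover every $m$ with $1\le m\le n-1$ has all its prime factors strictly smaller than $n$, so the $k_n$-th coordinate of $\mathbf{c}_m$ vanishes. Consequently, for the vector $\mathbf{x}:=(0,\dots,0,\pi)\in\mathbb{R}^{k_n}$ one has $\langle\mathbf{c}_m,\mathbf{x}\rangle=0$ for $m<n$ and $\langle\mathbf{c}_n,\mathbf{x}\rangle=\pi$, so that
\[
F_n(x_{n,1},\mathbf{x})=\big(1+2^{x_{n,1}}+\cdots+(n-1)^{x_{n,1}}\big)+n^{x_{n,1}}e^{i\pi}=\big(1+2^{x_{n,1}}+\cdots+(n-1)^{x_{n,1}}\big)-n^{x_{n,1}}.
\]
This vanishes because the defining inequality of $x_{n,1}$ holds with equality at $\sigma=x_{n,1}$ (the set $\{\sigma:1+2^{\sigma}+\cdots+(n-1)^{\sigma}\ge n^{\sigma}\}$ is closed, non-empty and bounded above, hence contains its supremum, while every $\sigma>x_{n,1}$ violates it, so continuity forces equality at $x_{n,1}$). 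Thus $F_n(x_{n,1},\mathbf{x})=0$, and Theorem 1 gives $x_{n,1}\in R_n\subset[a_n,b_n]$, so $x_{n,1}\le b_n$; together with $b_n\le x_{n,1}$ from Lemma 4 this yields $b_n=x_{n,1}$, completing the chain and the proof.

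The step I expect to be the genuine obstacle is $b_n=x_{n,1}$: the two modulus estimates are routine, but forcing the rightmost zero of $G_n$ out to $x_{n,1}$ really does require $\mathbf{c}_n$ to be a coordinate vector, i.e.\ $n$ prime; for composite $n$ the test vector $\pi\,\mathbf{e}_{k_n}$ is unavailable and $b_n<x_{n,1}$ can occur, which is why the hypothesis that $n$ is prime is essential here.
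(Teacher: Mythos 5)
Your proof is correct, and it follows the same global strategy as the paper's (trap $b_n'$ strictly below $b_n$ by relating each to a purely real threshold, with $\log k/\log n<1$ providing the strict separation), but it diverges in two steps in a way that is worth noting. First, to show $b_n=x_{n,1}$ you apply Theorem 1 directly with the test vector $(0,\dots,0,\pi)$, exploiting that $\mathbf{c}_n=e_{k_n}$ when $n$ is prime. The paper instead reproves $b_n\le x_{n,1}$ by a modulus bound and obtains $x_{n,1}\le b_n$ via Corollary 21, i.e.\ through the $G_n^{\ast}$/$A_n$ level-curve machinery of Section 4 (Theorem 19, Definition 20). Your route is more self-contained, since it bypasses the level-curve apparatus; it is worth observing, though, that the two are the same computation in disguise, because $F_n(\sigma,(0,\dots,0,\pi))=G_n^{\ast}(\sigma)-p_{k_n}^{\sigma}=A_n(\sigma,0)$, so you are really just unwinding Corollary 21 back to Theorem 1. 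Second, and more substantively, the paper asserts $b_n'=b_{n,1}'$ ``by a similar procedure'' — but the $\le$ direction of that equality would require a Corollary-21 analogue for $G_n'$ that the paper never develops. Your argument avoids this by observing that only the one-sided modulus estimate $b_n'\le\beta$ is needed: combined with $\beta<x_{n,1}=b_n$ it already gives the strict inequality. This is a genuine simplification that closes a gap the paper leaves implicit, so your version is both correct and slightly tighter than the original.
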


\begin{proof}
Let us define the real functions
\begin{equation*}
f(x):=1+2^{x}+...+(n-1)^{x},
\end{equation*}%
\begin{equation*}
g(x):=\frac{\log 2}{\log n}2^{x}+\frac{\log 3}{\log n}3^{x}...+\frac{\log
(n-1)}{\log n}(n-1)^{x}
\end{equation*}%
and
\begin{equation*}
h(x):=n^{x}.
\end{equation*}%
Firstly let us say that $b_{n}=b_{n,1}$, where $b_{n,1}$ is defined as
\begin{equation*}
b_{n,1}:=\sup \left\{ x\in
\mathbb{R}
:f(x)=h(x)\right\}.
\end{equation*}%
Indeed, since $h(x)$, $f(x)$ are positive real functions satisfying
\begin{equation*}
\lim_{x\rightarrow +\infty }\frac{h(x)}{f(x)}=+\infty,
\end{equation*}%
there exists some $x_{0}$ such that $h(x)>f(x)$ for all $x\geq x_{0}$. On
the other hand,
\begin{equation*}
\lim_{x\rightarrow -\infty }\frac{h(x)}{f(x)}=0,
\end{equation*}%
then, from continuity of $\frac{h(x)}{f(x)}$, the number $b_{n,1}$ exists
and is not greater than $x_{0}$. Furthermore, from the definition of $b_{n,1}
$ and the mean value theorem, it follows that
\begin{equation}
h(x)>f(x)\mbox{, for all }x>b_{n,1}.  \tag{5.1}
\end{equation}%
\ Then, given $x>b_{n,1}$, for an arbitrary $y$, we have
\begin{equation*}
\left\vert G_{n}(x+iy)\right\vert =
\end{equation*}%
\begin{equation*}
=\left\vert 1+2^{x}2^{iy}+...+(n-1)^{x}(n-1)^{iy}+n^{x}{}^{+iy}\right\vert
\geq
\end{equation*}%
\begin{equation*}
\geq \left\vert \left\vert n^{x+iy}\right\vert -\left\vert
1+2^{x}2^{iy}+...+(n-1)^{x}(n-1)^{iy}\right\vert \right\vert \geq
\end{equation*}%
\begin{equation*}
\geq \left\vert n^{x}-\left( 1+2^{x}+...+(n-1)^{x}\right) \right\vert >0%
,
\end{equation*}%
which implies that $\left\{ z:\mbox{Re}z>b_{n,1}\right\} $ is a
free-zero region of $G_{n}(s)$ and, in consequence,
\begin{equation}
b_{n}\leq b_{n,1}.  \tag{5.2}
\end{equation}

On the other hand, as $n$ is a prime number, $n=p_{k_{n}}$, so, $f(x)=$ $%
G_{n}^{\ast }(x)$ and $h(x)=p_{k_{n}}^{x}$, for all $x\in
\mathbb{R}
$. Now, since $f(b_{n,1})=h(b_{n,1})$, because of Definition 20, we have
\begin{equation*}
A_{n}(b_{n,1},0)=\left\vert G_{n}^{\ast }(b_{n,1})\right\vert
-p_{k_{n}}^{b_{n,1}}=f(b_{n,1})-h(b_{n,1})=0,
\end{equation*}%
which means, from Corollary 21, that $b_{n,1}\in R_{n}\subset \left[
a_{n},b_{n}\right] $ and thus
\begin{equation*}
b_{n,1}\leq b_{n}.
\end{equation*}%
Consequently, from (5.2), we get
\begin{equation}
b_{n}=b_{n,1},  \tag{5.3}
\end{equation}%
as we claimed.

Now by following a similar procedure to what we did above, we obtain
\begin{equation}
b_{n,1}^{\prime }=b_{n}^{\prime }  \tag{5.4}
\end{equation}%
where
\begin{equation*}
b_{n,1}^{\prime }:=\sup \left\{ x\in
\mathbb{R}
:g(x)=h(x)\right\}.
\end{equation*}%
Then, according to (5.3) and (5.4), we have
\begin{equation}
f(b_{n})=h(b_{n})\mbox{ and }g(b_{n}^{\prime })=h(b_{n}^{\prime }).
\tag{5.5}
\end{equation}%
Finally, let us say that $b_{n}^{\prime }<$ $b_{n}$. Indeed, since $f(x)>g(x)
$ for all $x\in
\mathbb{R}
$, from (5.5), we deduce that $b_{n}^{\prime }\neq $ $b_{n}$. If we assume
that\ $b_{n}^{\prime }>$ $b_{n}$, from (5.3), (5.4) and (5.1), we are led to
\begin{equation*}
h(b_{n}^{\prime })>f(b_{n}^{\prime }).
\end{equation*}%
Now, noticing (5.5), it follows that
\begin{equation*}
g(b_{n}^{\prime })>f(b_{n}^{\prime }),
\end{equation*}%
which is a contradiction because $f(x)>g(x)$ for all $x\in
\mathbb{R}
$. Therefore, it is true that%
\begin{equation*}
b_{n}^{\prime }<b_{n}
\end{equation*}%
and the result follows.
\end{proof}

From the previous result, the next corollary can be easily derived.

\begin{corollary}
Let $n>2$ a prime number, $b_{n}:=\sup \left\{
\mbox{Re}s:G_{n}(s)=0\right\} $ and $b_{n}^{\prime }:=\sup \left\{
\mbox{Re}s:G_{n}^{\prime }(s)=0\right\} $. Then, all the zeros of
$G_{n}(s)$ situated in the vertical strip $\left\{ z\in
\mathbb{C}
:b_{n}^{\prime }<\mbox{Re}z<b_{n}\right\} $ are simple.
\end{corollary}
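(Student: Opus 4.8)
The plan is to derive the corollary directly from Proposition 28 together with the elementary fact that a zero $s_{0}$ of an entire function $G_{n}(s)$ is simple precisely when $G_{n}^{\prime}(s_{0})\neq 0$. First I would note that Proposition 28 gives $b_{n}^{\prime}<b_{n}$, so the vertical strip $\left\{z\in\mathbb{C}:b_{n}^{\prime}<\operatorname{Re}z<b_{n}\right\}$ is non-empty and the statement is not vacuous.

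Next I would take an arbitrary zero $s_{0}=\sigma_{0}+it_{0}$ of $G_{n}(s)$ lying in that strip, so that $b_{n}^{\prime}<\sigma_{0}<b_{n}$. By the very definition of $b_{n}^{\prime}=\sup\left\{\operatorname{Re}s:G_{n}^{\prime}(s)=0\right\}$, every zero of $G_{n}^{\prime}(s)$ has real part at most $b_{n}^{\prime}$. Since $\operatorname{Re}s_{0}=\sigma_{0}>b_{n}^{\prime}$, the point $s_{0}$ cannot be a zero of $G_{n}^{\prime}(s)$; that is, $G_{n}^{\prime}(s_{0})\neq 0$.

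Finally I would conclude that, because $G_{n}(s_{0})=0$ while $G_{n}^{\prime}(s_{0})\neq 0$, the zero $s_{0}$ is simple. As $s_{0}$ was an arbitrary zero of $G_{n}(s)$ in the strip, all the zeros of $G_{n}(s)$ in $\left\{z\in\mathbb{C}:b_{n}^{\prime}<\operatorname{Re}z<b_{n}\right\}$ are simple, which is the assertion. There is essentially no obstacle here: the only substantive input is the strict inequality $b_{n}^{\prime}<b_{n}$ supplied by Proposition 28, and the rest is immediate from the definitions of $b_{n}$ and $b_{n}^{\prime}$ as suprema of real parts of zero sets.
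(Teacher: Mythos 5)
Your argument is correct and is exactly the intended one: the paper simply states that the corollary ``can be easily derived'' from the preceding proposition, and your derivation --- that $\operatorname{Re}s_{0}>b_{n}^{\prime}$ forces $G_{n}^{\prime}(s_{0})\neq 0$ by the definition of $b_{n}^{\prime}$ as a supremum, hence $s_{0}$ is simple --- is precisely that easy derivation.
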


\bigskip

\end{document}